\numberwithin{equation}{section}
\newtheorem{theorem}{Theorem}[section]
\newtheorem{lemma}[theorem]{Lemma}
\newtheorem{proposition}[theorem]{Proposition}
\newtheorem{corollary}[theorem]{Corollary}
\newcommand{\abs}[1]{|#1|}
\newcommand{\C}{\ensuremath{\mathbb{C}^n}}
\newcommand{\Ctwo}{\ensuremath{\mathbb{C}^2}}
\newcommand{\R}{\ensuremath{\mathbb{R}}}
\newcommand{\Z}{\ensuremath{\mathbb{Z}}}
\newcommand{\N}{\ensuremath{\mathbb{N}}}
\renewcommand{\S}{\ensuremath{\text{Sig}_d}}
\newcommand{\Rd}{\ensuremath{{\mathbb{R}^d}}}
\newcommand{\D}{\ensuremath{\mathscr{D}}}
\newcommand{\J}{\ensuremath{\mathscr{J}}}
\newcommand{\F}{\ensuremath{\mathscr{F}}}
\newcommand{\G}{\ensuremath{\mathscr{G}}}
\newcommand{\h}{\ensuremath{\mathscr{H}}}
\newcommand{\BMOd}{\ensuremath{\text{BMO}^d}}
\newcommand{\BMOW}{\ensuremath{{\text{BMO}}_W ^p}}
\newcommand{\BMOWD}{\ensuremath{{\text{BMO}}_{W, \D} ^p}}
\newcommand{\BMOWDt}{\ensuremath{{\text{BMO}}_{W, \D^t} ^p}}
\newcommand{\BMOWq}{\ensuremath{{\text{BMO}}_{W^{1 - p'}} ^{p'}}}
\newcommand{\Mn}{\ensuremath{\mathcal{M}_{n }}(\mathbb{C})}
\newcommand{\inrd}{\ensuremath{\int_{\Rd}}}
\newcommand{\tr}{\ensuremath{\text{tr}}}
\newcommand{\ip}[2]{\ensuremath{\left\langle#1,#2\right\rangle}}
\newcommand{\BMO}{\ensuremath{\text{BMO}}}
\newcommand{\Atwo}[1]{\ensuremath{\|#1 \|_{\text{A}_2} }}
\newcommand{\V}[1]{\ensuremath{\vec{#1}}}
\newcommand{\MC}[1]{\ensuremath{\mathcal{#1}}}
\newcommand{\chG}{\ensuremath{\text{ch}_{\MC{G}}}}
\title[Weighted  inequalities, commutators, and paraproducts]{Matrix weighted norm inequalities for commutators and paraproducts with matrix symbols}
\author[Joshua Isralowitz]{Joshua Isralowitz}
\address[Joshua Isralowitz]{Department of Mathematics and Statistics \\
SUNY Albany \\
1400 Washington Ave. \\
 Albany, NY  \\
12222}
\email[Joshua Isralowitz]{jisralowitz@albany.edu}
\author[Hyun Kyoung Kwon]{Hyun Kyoung Kwon}
\address[Hyun Kyoung Kwon]{Department of Mathematics \\
The University of Alabama \\
P. O. Box 870350 \\
Tuscaloosa, AL \\
35487 }
\email[Hyun Kyoung Kwon]{hkwon@ua.edu
}
\author[Sandra Pott]{Sandra Pott}
\address[Sandra Pott]{Centre for Mathematical Sciences \\ Lund University \\ P.O. Box 118 \\ S-221 00 Lund \\ Sweden}
\email[Sandra Pott]{sandra@maths.lth.se}
\subjclass[2010]{ 42B20}
\begin{document}

\maketitle

\begin{abstract}
Let $B$ be a locally integrable matrix function, $W$ a matrix A${}_p$ weight with $1 < p < \infty$, and $T$ be any of the Riesz transforms.  We will characterize the boundedness of the commutator $[T, B]$ on $L^p(W)$ in terms of the membership of $B$ in a natural matrix weighted BMO space.  To do this, we will characterize the boundedness of dyadic paraproducts on $L^p(W)$ via a new matrix weighted Carleson embedding theorem. Finally, we will use some of the ideas from these proofs to (among other things) obtain quantitative weighted norm inequalities for these operators and also use them to prove sharp $L^2$ bounds for the Christ/Goldberg matrix weighted maximal function associated with matrix A${}_2$ weights.
\end{abstract}



\section{Introduction}
\label{Intro}
\subsection{Motivation} The $L^p$ boundedness of commutators between functions and Calder\'{o}n-Zygmund operators (or CZOs for short) is a classical subject that was first studied in \cite{CRW} and has numerous applications to PDEs, operator theory, and complex analysis (see \cite{CRW, T} for a small sampling of these applications). Although numerous authors have subsequently used or extended the boundedness results in \cite{CRW}, and although weighted norm inequalities for CZOs have been extensively studied for the past $40$ years or so (starting with the seminal work \cite{HMW}), less attention has been paid towards weighted norm inequalities for commutators.  It is well known, however, that the commutator $[T, b]$ is bounded on $L^p(w)$ (where $T$ is a CZO and $w$ is an A${}_p$ weight) if $b$ is in the classical John-Nirenberg BMO space. Furthermore, it is well known that the boundedness of $[T, b]$ on $L^p(w)$ implies that $b \in \BMO$ when $T$ is one of the Riesz transforms (see \cite{Bl, ABKP} for example.  Also see the interesting preprints \cite{HLW1, HLW2} for a  modern discussion and extensions of the results in \cite{Bl}).

 On the other hand, it is well known that proving matrix weighted norm inequalities for even CZOs is a very difficult task, and because of this, matrix weighted norm inequalities for certain CZOs have only recently been investigated (see \cite{TV, V} for specific details of these difficulties).  In particular, if $n$ and $d$ are natural numbers and if $W : \Rd \rightarrow \Mn$ is positive definite a. e.  (where as usual $\Mn$ is the algebra of $n \times n$ matrices with complex scalar entries), then define $L^p(W)$ for $1 < p < \infty$ to be the space of measurable functions $\vec{f} : \Rd \rightarrow \C$ where \begin{equation*} \|\vec{f}\|_{L^p(W)} ^p = \inrd |W^\frac{1}{p} (x) \vec{f}(x) |^p \, dx < \infty. \end{equation*} It was proved by F. Nazarov and S. Treil, M. Goldberg, and A. Volberg, respectively in \cite{G, NT, V} that certain CZOs acting componentwise on $\C$ valued functions are bounded on $L^p(W)$ when $1 < p < \infty$ if  $W$ is a matrix A${}_p$ weight, which means that \begin{equation} \label{MatrixApDef} \|W\|_{\text{A}_p} := \sup_{\substack{I \subset \R^d \\ I \text{ is a cube}}} \frac{1}{|I|} \int_I \left( \frac{1}{|I|} \int_I \|W^{\frac{1}{p}} (x) W^{- \frac{1}{p}} (t) \|^{p'} \, dt \right)^\frac{p}{p'} \, dx  < \infty \end{equation} where $p'$ is the conjugate exponent of $p$.

Despite this, virtually nothing has been studied regarding matrix weighted norm inequalities for operators related to CZOs that themselves have matrix kernels (in the case of CZOs) or matrix symbols (in the case of operators such as commutators, paraproducts, or Haar multipliers).  The purpose of this paper is to initiate such a study, and in particular, we will characterize matrix weighted norm inequalities for commutators $[T, B]$ when $W$ is a matrix A${}_p$ weight, $B$ is a locally integrable matrix function, and $T$ is any of the Riesz transforms (see also the first author's preprint \cite{I2} where the matrix weighted boundedness of certain matrix kernelled CZOs is investigated).

\subsection{Reducing operators}  We will need to briefly discuss  a very important reformulation of the matrix A${}_p$ condition before we state our main results.  Given any norm $\rho$ on $\C$, the classical ``John's ellipsoid theorem" (see \cite{Bow}) says that there exists a reducing operator $V$ (i.e. a positive definite $n \times n$ matrix $V$) where \begin{equation*}  \rho(\V{e}) \leq      |V \V{e}| \leq \sqrt{n} \rho(\V{e})  \end{equation*}for all $\V{e} \in \C$.    Given a matrix weight $W$, a cube $I$, and some  $1 < p < \infty$, let $V_I = V_I(W, p)$ and $ V_I' = V_I'(W, p) $ be  reducing operators corresponding to the norms \begin{equation*} \rho_{W, I, p} (\V{e}) := \left(\frac{1}{|I|} \int_I |W^\frac{1}{p} (x) \V{e}|^p \, dx \right)^\frac{1}{p} \ \text{ and }  \  \rho_{W, I, p} ^* (\V{e}) := \left(\frac{1}{|I|}  \int_I |W^{-\frac{1}{p}} (x) \V{e}|^{p'} \, dx \right)^\frac{1}{p'} \end{equation*} \noindent on $\C$, respectively. \noindent Note that while these reducing operators are not necessarily unique, the precise reducing operator being used will not be important.  It will be important later, however,  to notice that by definition we can take  $V_I(W^{1 - p'}, p') = V_I ' (W, p)$ and $V_I ' (W^{1 - p'}, p') =  V_I(W, p)$.

  Using these reducing operators and the equivalence of the canonical matrix norm and trace norm on $\Mn$, we have for a matrix A${}_p$ weight $W$ that
\begin{equation*} \|W\|_{\text{A}_p}  = \sup_{\substack{I \subset \R^d \\ I \text{ is a cube}}} \frac{1}{|I|} \int_I \left( \frac{1}{|I|} \int_I \|W^{\frac{1}{p}} (x) W^{- \frac{1}{p}} (t) \|^{p'} \, dt \right)^\frac{p}{p'} \, dx   \approx \sup_{\substack{I \subset \R^d \\ I \text{ is a cube}}} \|V_I V_I' \| ^p \end{equation*} \noindent which also immediately gives us that $W$ is a matrix A${}_p$ weight if and only if $W^{1-p'}$ is a matrix A${}_{p'}$ weight. Furthermore, it is not difficult to see  (using H\"{o}lder's inequality and some elementary arguments involving norms and dual norms, see \cite{G} p. 4) that \begin{equation} |V_I ' V_I \V{e}| \geq |\V{e}| \label{ReverseAp} \end{equation}  for any matrix (not necessarily matrix A${}_p$) weight $W$, any cube $I$, any $1 < p < \infty$, and any $\V{e} \in \C$.

 Also when $p = 2$, a very simple and direct computation shows that we may take  $V_I = (m_I W)^\frac{1}{2}$ and $V_I ' = (m_I (W^{-1}))^{\frac{1}{2}}$ where $m_I  W$ is the average of $W$ on $I$. In particular,  the matrix A${}_2$ condition takes on a particularly simple form that is very similar to the scalar A${}_2$ condition.  Similarly when $W(x) = w(x) \text{Id}_{n \times n}$ for a scalar A${}_p$ weight $w$ we can take $V_I = (m_I w)^\frac{1}{p} \text{Id}_{n \times n}$ and $V_I' = (m_I w^{1-p'})^{\frac{1}{p'}} \text{Id}_{n \times n}$.

 Lastly, it will be useful later in the paper to examine the relationship between $V_I$ and $V_{\tilde{I}}$ where $I, \tilde{I}$ are cubes with $I \subseteq \tilde{I}$ and comparable side-lengths.  In particular, for any $\V{e} \in \C$ we have \begin{equation} |V_I \V{e}|^p  \approx \frac{1}{|I|} \int_I |W^{\frac{1}{p}} (x) \V{e}|^p \, dx \lesssim \frac{1}{|\tilde{I}|} \int_{\tilde{I}} |W^{\frac{1}{p}} (x) \V{e}|^p \, dx \lesssim  |V_{\tilde{I}} \V{e} |^p \label{SmallBig} \end{equation} and a similar computation shows that \begin{equation*} |V_{I} '  \V{e}|^{p'} \lesssim |V_{\tilde{I}} ' \V{e}|^{p'}   \end{equation*} or equivalently \begin{equation*} |(V_{\tilde{I}} ')^{-1} \V{e}|^{p'} \lesssim | (V_{I} ')^{-1}  \V{e}|^{p'}.   \end{equation*} \noindent On the other hand, if $W$ is a matrix A${}_p$ weight then the inequality above combined with the the A${}_p$ condition gives us that \begin{equation} |V_{\tilde{I}} \V{e} |^p  \lesssim \|V_{\tilde{I}} V_{\tilde{I}}' \|^p  |(V_{\tilde{I}}' )^{-1} \V{e} |^p  \leq \|W\|_{\text{A}_p} |(V_{I} ')^{-1} \V{e} |^p \leq \|W\|_{\text{A}_p} |V_I  \V{e} |^p \label{BigSmall} \end{equation} where the last line follows from \eqref{ReverseAp}.

\subsection{Notation and main results}
 Now if $1 < p < \infty$ and $W$ is a matrix A${}_p$ weight, then let $\BMOW$ be the space of locally integrable functions $B : \Rd \rightarrow \Mn$ where  \begin{equation*}     \left\{
     \begin{array}{lr}
     \displaystyle   \sup_{\substack{I \subset \R^d \\  I \text{ is a cube}}} \frac{1}{|I|} \int_I \|W^\frac{1}{p} (x) (B(x) - m_I B) V_I ^{-1}  \|^p \, dx < \infty  & :\text{ if } 2 \leq p  < \infty \\
     \displaystyle   \sup_{\substack{I \subset \R^d \\  I \text{ is a cube}}} \frac{1}{|I|} \int_I \|W^{-\frac{1}{p}} (x) (B^* (x) - m_I B^*) (V_I ')^{-1}  \|^{p'} \, dx < \infty & : \text{ if } 1 < p \leq 2.
     \end{array}
   \right.
\end{equation*}

\noindent Also, given a dyadic grid $\D$,  we will let $\BMOWD$ denote the space of locally integrable $n \times n$ functions satisfying the condition above but where the supremum is taken over all $I \in \D$. Note that these two conditions should be thought of as dual to each other (in a precise sense that will be explained later in this introduction.)   The main result of this paper is the following

\begin{theorem}\label{CommutatorThm}  Let $1 < p < \infty.$  If $W$ is a matrix A${}_p$ weight and $T$ is any of the Riesz transforms, then $[T, B]$ is bounded on $L^p(W)$   if and only if $B \in \BMOW$. \end{theorem}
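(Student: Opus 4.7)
The plan is to handle the two implications by quite different tools. Sufficiency ($B\in\BMOW$ implies $[T,B]$ is bounded on $L^p(W)$) I would reduce, via a dyadic representation theorem of Hyt\"onen (writing $T$ as an average over random dyadic grids $\D$ of dyadic shifts) combined with the standard paraproduct trinity $BF = \pi_B F + \pi_B^{*} F + \Lambda(B,F)$, to the $L^p(W)$ boundedness of matrix-valued dyadic paraproducts $\pi_B$ with symbol $B$ (or $B^{*}$). This is where the hypothesis $B\in\BMOWD$ uniformly in $\D$ will be used directly. Necessity I would obtain via a Coifman--Rochberg--Weiss/Bloom-type testing argument, constructing for each cube $I$ and each vector $\V{e}\in\C$ a local test function whose image under $[T,B]$ allows extraction of the $\BMOW$ integrand.

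For sufficiency, the heart of the matter is the boundedness of the dyadic paraproduct $\pi_B \vec{f}:=\sum_{I\in\D}(\Delta_I B)\,(m_I \vec{f})$ on $L^p(W)$ when $B\in\BMOWD$. I would prove this through a new matrix weighted dyadic Carleson embedding theorem of the schematic form: if $\{A_I\}_{I\in\D}$ is a sequence of nonnegative $n\times n$ matrices with
\[
\sum_{J\in\D,\, J\subseteq I}\tr\!\bigl(V_I\, A_J^{*} A_J\, V_I\bigr)\lesssim |I|
\]
for every $I\in\D$, then the associated square function embeds into $L^p(W)$ with the correct pairing against $V_I'$. Specializing $A_I$ to a two-sided renormalization of the Haar coefficients of $B$ by $V_I$ and $V_I'$ should turn the $\BMOWD$ condition exactly into the above Carleson hypothesis, yielding $\|\pi_B\|_{L^p(W)\to L^p(W)}\lesssim\|B\|_{\BMOWD}$. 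Averaging over random grids then produces $\|[T,B]\|_{L^p(W)\to L^p(W)}\lesssim\|B\|_{\BMOW}$.

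For necessity, I would adapt the classical Bloom/Uchiyama trick. Fix a cube $I$ and a unit vector $\V{e}\in\C$, and choose a translated cube $\tilde I$ of comparable size so that the Riesz kernel does not change sign on $\tilde I\times I$ and has absolute value $\gtrsim |I|^{-1}$ there. Then, for $p\ge 2$, testing $[T,B]$ against $\V{f}_I:=\chi_I\, V_I^{-1}\V{e}$ and pairing with a carefully chosen dual function supported on $\tilde I$, the cancellation $\int_I(B(x)-m_I B)\,dx=0$ combined with the kernel estimate produces the lower bound
\[
\frac{1}{|I|}\int_I\bigl|W^{1/p}(x)(B(x)-m_I B)V_I^{-1}\V{e}\bigr|^p\,dx\lesssim\|[T,B]\|_{L^p(W)\to L^p(W)}^p,
\]
which after taking the supremum over $\V{e}$ and $I$ is exactly the first branch of the $\BMOW$ definition. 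The $1<p<2$ case should follow by duality against $[T,B]^{*}=-[T,B^{*}]$ acting on $L^{p'}(W^{1-p'})$ (with $W^{1-p'}$ matrix A${}_{p'}$), using the identities $V_I(W^{1-p'},p')=V_I'(W,p)$ and $V_I'(W^{1-p'},p')=V_I(W,p)$ to convert the resulting estimate into the second branch of the $\BMOW$ definition.

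The main obstacle I expect is the matrix Carleson embedding theorem. Reducing operators satisfy only the one-sided comparisons \eqref{SmallBig}--\eqref{BigSmall} and no multiplicative identity, so the scalar stopping-time and $H^1$--BMO duality proofs do not transfer cleanly; one is forced to insert $V_I V_I'$-type factors throughout and invoke both the matrix A${}_p$ bound \eqref{MatrixApDef} and the reverse inequality \eqref{ReverseAp} at almost every step. A secondary, less severe, obstacle is keeping the side of multiplication of $B$ consistent throughout both directions, so that the $p\ge 2$ and $1<p\le 2$ branches of $\BMOW$ match up correctly under the duality reduction.
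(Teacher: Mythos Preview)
Your overall architecture---sufficiency via paraproducts plus a matrix Carleson embedding, necessity via kernel testing, and the duality switch for $1<p<2$---matches the paper's, but both directions differ in their concrete implementation, and the sufficiency side has a gap.

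\textbf{Sufficiency.} The paper does \emph{not} invoke Hyt\"onen's general representation; for the Riesz transforms it suffices to use the Petermichl-type fact that each $T$ lies in the closed convex hull of \emph{first-order} Haar shifts $Q$ (defined by $Qh_I^\varepsilon=h_{\sigma(I)}^{\sigma(\varepsilon)}$ with $\sigma(I)$ a child of $I$). The paper then expands $[B,Q]\vec f$ directly via $[h_{I'}^{\varepsilon'},Q]h_I^\varepsilon$ and sorts the pieces. The triangular pieces assemble into $Q\pi_B$, $\pi_BQ$, $(\pi_{B^*})^*Q$, $Q(\pi_{B^*})^*$, handled by Theorem~\ref{ParaThm} and Corollary~\ref{CorForSufficiency}; but the diagonal and near-diagonal pieces ($I'=I$, $I'=\sigma(I)$) are genuinely \emph{Haar multiplier} terms, and their boundedness requires Proposition~\ref{HaarMultThm} (the condition $\sup_{I,\varepsilon}\|V_IA_I^\varepsilon V_I^{-1}\|<\infty$). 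Your ``paraproduct trinity'' plan $BF=\pi_BF+(\pi_{B^*})^*F+\Lambda(B,F)$ does not escape this: the commutator $[Q,\Lambda_B]$ produces exactly these multiplier terms, and you will still need to prove a matrix Haar multiplier theorem. That ingredient is missing from your outline, and it is not a triviality---its proof uses \eqref{LpEmbedding} twice plus Lemma~\ref{RedOp-AveLem}. Also, your Carleson condition $\sum_{J\subseteq I}\tr(V_IA_J^*A_JV_I)\lesssim|I|$ is not quite the right object; the paper's working conditions are (b) $\sup_J|J|^{-1}\sum_{I\subseteq J}\|V_IA_I^\varepsilon V_I^{-1}\|^2<\infty$ and (c) $\sum_{I\subseteq J}(A_I^\varepsilon)^*V_I^2A_I^\varepsilon\le CV_J^2$, with a stopping-time argument (Lemma~\ref{DSTLem}) linking them.

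\textbf{Necessity.} Here you take a genuinely different route. The paper uses Janson's Fourier-series trick: write $1/K$ as an absolutely convergent Fourier series near a point where $K$ is smooth and nonvanishing, test on $g_n(y)=e^{-i(\delta/r)v_n\cdot y}V_Q^{-1}\chi_{Q'}(y)$, and sum. Your Bloom/Uchiyama-style testing should also work for Riesz transforms, but note that as written the roles of $I$ and $\tilde I$ are reversed: to recover $\int_I|W^{1/p}(x)(B(x)-m_IB)V_I^{-1}\vec e|^p\,dx$ you need the \emph{input} supported on the translated cube $\tilde I$ and the \emph{output} evaluated on $I$, not the other way around; then Lemma~\ref{InfLem} lets you swap $m_{\tilde I}B$ for $m_IB$. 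Your duality reduction for $1<p<2$ via $(W^{1/p}[T,B]W^{-1/p})^*=-W^{-1/p}[T,B^*]W^{1/p}$ is exactly what the paper does.
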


As is well known, the study of such commutators is often reduced to the study of paraproducts, and this is the approach we will take for proving Theorem \ref{CommutatorThm}. Before we define our paraproducts, let us review some definitions and notation regarding Haar functions in several variables.  Following the notation in \cite{LPPW}, for any dyadic grid in $\mathbb{R}$ and any interval in this grid, let  \begin{equation*} h_I ^1 = |I|^{-\frac{1}{2}} \chi_I (x), \,  \,  \,  \, \, \, \,  h_I ^0 (x) = |I|^{-\frac{1}{2}} (\chi_{I_\ell} (x) - \chi_{I_r} (x)).  \end{equation*} Now given any dyadic grid $\D$ in $\mathbb{R}^d,$  any cube $I = I_1 \times \cdots \times I_d$, and any $\varepsilon \in \{0, 1\}^{d}$, let $h_I ^\varepsilon = \Pi_{i = 1}^d h_{I_i} ^\varepsilon$.  It is then easily seen that $\{h_I ^\varepsilon\}_{I \in \D, \  \varepsilon \in \S}$  where $\S = \{0, 1\}^d \backslash \{\vec{1}\}$ is an orthonormal basis for $L^2(\Rd)$. We will say $h_I ^\varepsilon$ is ``cancellative" if $\varepsilon \neq \vec{1}$ since in this case $\int_I h_I^\varepsilon = 0$.

Now given a locally integrable function $B : \Rd \rightarrow \Mn$, define the dyadic paraproduct $\pi_B$ with respect to a dyadic grid $\D$ by \begin{equation} \label{ParaprodDef} \pi_B \vec{f} = \sum_{\varepsilon \in \S} \sum_{I \in \D} B_I ^\varepsilon (m_I \vec{f}) h_I ^\varepsilon \end{equation}
 where $B_I ^\varepsilon$ is the matrix of Haar coefficients of the entries of $B$ with respect to $I$ and $\varepsilon$,  and $m_I \vec{f}$ is the vector of averages of the entries of $\vec{f}$.  The proof of Theorem \ref{CommutatorThm} will be largely based on the following

 \begin{theorem}  \label{ParaThm} Let $1 < p < \infty$ and let $\D$ be a dyadic grid.  If $W$ is a matrix A${}_p$ weight then the paraproduct $\pi_B$ with respect to $\D$ is bounded on $L^p(W)$ if and only if $B \in \BMOWD$.  \end{theorem}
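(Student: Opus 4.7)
The plan is to use the matrix Littlewood--Paley--Stein (LPS) equivalence to translate both directions into a single operator-valued Carleson condition on the Haar coefficients of $B$, and then to bridge the two via a new matrix weighted Carleson embedding theorem. The LPS equivalence (due to Treil--Volberg at $p=2$, with extensions for general $p$),
\[
\|\vec{g}\|_{L^p(W)}^p \approx \int_{\Rd}\Biggl(\sum_{I\in\D,\,\varepsilon\in\S}\frac{|V_I(\vec{g})_I^\varepsilon|^2}{|I|}\chi_I(x)\Biggr)^{p/2}dx,
\]
enters in two guises. Applied to $\vec{g}=\pi_B\vec{f}$ (whose Haar coefficients are $B_I^\varepsilon m_I\vec{f}$), it rewrites $\|\pi_B\vec{f}\|_{L^p(W)}^p$ in terms of the operator-valued weights $(B_I^\varepsilon)^* V_I^2 B_I^\varepsilon$. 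Applied to the auxiliary function $(B-m_I B)V_I^{-1}\vec{u}\,\chi_I$, whose Haar coefficients vanish outside $J\subseteq I$ and equal $B_J^\varepsilon V_I^{-1}\vec{u}$ for $J\subseteq I$ (by a direct computation using that Haar functions at larger scales are constant on $I$), it rewrites the $\BMOWD$ norm in $L^{p/2}$-type Carleson form.

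For necessity in the case $p\geq 2$, we test $\pi_B$ on $\vec{f}=V_{I_0}^{-1}\vec{u}\,\chi_{I_0}$, for which $\|\vec{f}\|_{L^p(W)}^p\approx |I_0|$ and $m_J\vec{f}=V_{I_0}^{-1}\vec{u}$ for $J\subseteq I_0$. LPS applied to $\pi_B\vec{f}$ then yields, after localizing to $I_0$,
\[
\frac{1}{|I_0|}\int_{I_0}\Biggl(\sum_{J\subseteq I_0,\,\varepsilon}\frac{|V_J B_J^\varepsilon V_{I_0}^{-1}\vec{u}|^2}{|J|}\chi_J(x)\Biggr)^{p/2}dx \lesssim \|\pi_B\|^p,
\]
uniformly in unit $\vec{u}$ and $I_0\in\D$; tail contributions from $J\supsetneq I_0$ are absorbed via a slightly modified test function (chosen to kill the large-scale averages), combined with \eqref{SmallBig}--\eqref{BigSmall} and the $A_p$ hypothesis, and a finite $\tfrac12$-net of the unit sphere converts the vector inequality into the operator-norm form of condition (a) of $\BMOWD$. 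For sufficiency, the same LPS applied to $(B-m_I B)V_I^{-1}\vec{u}\,\chi_I$ shows, by Jensen's inequality (since $p/2\geq 1$), that condition (a) implies the operator-order matrix Carleson condition
\[
\sum_{J\subseteq I,\,\varepsilon}(B_J^\varepsilon)^* V_J^2 B_J^\varepsilon \lesssim \|B\|_{\BMOWD}^{2}\,|I|\,V_I^2,\qquad I\in\D.
\]
The argument is then closed by the following matrix weighted Carleson embedding theorem, which we would state and prove as a separate result: \emph{if $W$ is a matrix $A_p$ weight and $\{A_I\}_{I\in\D}$ are positive matrices with $\sum_{J\subseteq I}A_J\leq C_0|I|V_I^2$ in operator order, then}
\[
\int_{\Rd}\Biggl(\sum_I\frac{\langle A_I m_I\vec{f},\,m_I\vec{f}\rangle}{|I|}\chi_I\Biggr)^{p/2}dx \lesssim C_0^{p/2}\,\|\vec{f}\|_{L^p(W)}^p,
\]
applied with $A_I=\sum_\varepsilon(B_I^\varepsilon)^* V_I^2 B_I^\varepsilon$. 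The case $1<p<2$ is handled symmetrically: condition (b) in the definition of $\BMOWD$ is exactly the matrix Carleson condition for $(B^*)_I^\varepsilon$ with $V_I'$ in place of $V_I$ and the $A_{p'}$ weight $W^{1-p'}$ in place of $W$, so one runs the above scheme in dualized form; alternatively one uses $L^p$--$L^{p'}$ duality and analyzes the adjoint $\pi_B^* = \sum_{I,\varepsilon}(B_I^\varepsilon)^*\langle\,\cdot\,,h_I^\varepsilon\rangle\chi_I/|I|$ on $L^{p'}(W^{1-p'})$ by a parallel argument.

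The main obstacle is the matrix weighted Carleson embedding itself. The scalar analogue follows from a classical stopping-time/principal-cube argument, but the matrix version loses both the commutativity of the reducing operators $V_I$ at different scales and the exact (as opposed to $A_p$-weakened, cf.\ \eqref{SmallBig}--\eqref{BigSmall}) monotonicity between $V_I$ and $V_J$ for nested cubes. Our plan is to run the stopping-time on the scalar quantity $|(V_I')^{-1} m_I\vec{f}|$, which by \eqref{ReverseAp} and the identification $\|V_I V_I'\|^p\approx\|W\|_{A_p}$ noted after \eqref{MatrixApDef} is comparable to $|V_I m_I\vec{f}|$ up to the $A_p$ constant; to linearize the matrix operator sum on each stopping generation via the hypothesis $\sum_{J\subseteq I_k}A_J\lesssim|I_k|V_{I_k}^2$; and to close the estimate by a geometric series dominated by the Christ--Goldberg matrix maximal function on $L^p(W)$.
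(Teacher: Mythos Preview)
Your overall framework---reduce $\pi_B$ on $L^p(W)$ to a matrix Carleson embedding via the Littlewood--Paley equivalence \eqref{LpEmbedding}---matches the paper's, and your handling of necessity and of the passage from $\BMOWD$ to the operator-order Carleson hypothesis (via Jensen for $p\geq 2$, duality for $p<2$) is correct and essentially what the paper does.

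The gap is in your proposed proof of the embedding. You plan to stop on the scalar $|(V_I')^{-1}m_I\vec{f}\,|$ and then ``linearize the matrix operator sum on each stopping generation via the hypothesis $\sum_{J\subseteq I_k}A_J\lesssim|I_k|V_{I_k}^2$''. But this step does not go through: to invoke an operator-order inequality you must pair both sides against a \emph{fixed} vector, whereas $m_J\vec{f}$ varies with $J$ inside each generation. A scalar bound on $|V_J m_J\vec{f}|$ (or on $|(V_J')^{-1}m_J\vec{f}|$) along the stopping tree gives no handle on the bilinear forms $\langle A_J m_J\vec{f},\,m_J\vec{f}\rangle$; to pull $V_J$ through $A_J$ and reduce to the hypothesis at $I_k$ you would need two-sided control of the conjugations $V_J V_{I_k}^{-1}$ and $V_{I_k} V_J^{-1}$, which is information about the \emph{weight}, not the function.

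This is exactly how the paper proceeds instead. The embedding (Theorem~\ref{CarEmbedThm}) is proved in two stages: a stopping time on the weight (Lemma~\ref{DSTLem}, selecting maximal $J\subseteq K$ where $\|V_JV_{K}^{-1}\|^p$ or $\|V_J^{-1}V_{K}\|^{p'}$ is large) converts the operator-order condition (c) into the \emph{scalar} Carleson condition (b), namely $\sup_J|J|^{-1}\sum_{I\in\D(J),\,\varepsilon}\|V_I B_I^\varepsilon V_I^{-1}\|^2<\infty$; then (b) gives boundedness by reducing to a scalar paraproduct applied to the auxiliary maximal function $M_W'\vec{f}(x)=\sup_{I\ni x}m_I|V_IW^{-1/p}\vec{f}\,|$. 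The intermediate scalar condition (b) is the bridge that makes linearization possible, and it is obtained by stopping on $W$ rather than on $\vec{f}$.
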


\noindent Note that the proofs of Theorems \ref{CommutatorThm} and \ref{ParaThm} actually give us quantitative bounds when $p = 2$.  In particular,  we will prove that \begin{equation} \|\pi_B \|_{L^2(W) \rightarrow L^2(W)} \lesssim (\log \Atwo{W}) ^\frac12 \Atwo{W} ^\frac32 \|B\|_* ^\frac12   \label{ParaQuant}\end{equation}  where  $\|B\|_*$ is the canonical supremum in condition (b) of Theorem \ref{CarEmbedThm} below.   Moreover, we will prove that \begin{align}  \|[T, B] \|_{L^2(W) \rightarrow L^2(W)} & \lesssim \|Q\|_{L^2(W) \rightarrow L^2(W)} \max \{\|\pi_B\|_{L^2(W)\rightarrow L^2(W)} ,  \|\pi_{B^*}\|_{L^2(W^{-1} )\rightarrow L^2(W^{-1})} \nonumber \\ &  +  \Atwo{W} ^\frac32 \log \Atwo{W} \|B\|_{*} ^\frac12 \} \label{CommQuant} \end{align} where  $T$ is any of the Riesz transforms and $Q$ is a first order Haar shift (see Section \ref{Section31} for the definition.)     It would be very interesting to know if any similar commutator bounds for general scalar CZOs are true, and this will be explored in a forthcoming paper by the first and third authors.

Besides being extremely important for proving results regarding commutators (see \cite{LPPW, HLW1, HLW2} for example), note that paraproducts are central to the study of CZOs themselves since they allow one to decompose an arbitrary CZO $T$ as $T = \pi_{T1} + \pi_{T^*1} ^*  + R$ where $R$ is cancellative in the sense that $R1 = R^*1 = 0$. In fact, the first author in \cite{I2} will employ Theorem \ref{ParaThm} to prove a  T1 theorem regarding the matrix weighted boundedness of certain matrix kernelled CZOs.

 The proof of Theorem \ref{ParaThm} will easily follow from the following matrix weighted Carleson embedding theorem (see the next section for details), which is obviously of independent interest itself. Here, for a dyadic grid $\D$ and $J \in \D$, we define $\D(J) = \{I \in \D : I \subseteq J\}$.

  \begin{theorem} \label{CarEmbedThm} Let $1 < p < \infty$ and let $\D$ be a dyadic grid.  If  $W$ is a matrix A${}_p$ weight and $ A := \{A_I ^\varepsilon \}_{I \in \D, \varepsilon \in \S}$ is a sequence of matrices, then the following are equivalent

\begin{itemize}
 \item[(a)]  The operator $\Pi_A$ defined by \begin{equation*} \Pi_A \vec{f} := \sum_{\varepsilon \in \S} \sum_{I \in \D} V_I A_I ^\varepsilon  m_I ( W^{- \frac{1}{p}} \vec{f}) h_I ^\varepsilon \end{equation*} is bounded on $L^p(\Rd;\C)$.
\item[(b)]  \begin{equation*} \sup_{J \in \D} \frac{1}{|J|} \sum_{\varepsilon \in \S} \sum_{I \in \D(J)} \|V_I A_I ^\varepsilon V_I   ^{-1}  \|^2 < \infty. \end{equation*}
\item[(c)] There exists $C > 0$ independent of $J \in \D$ such that \begin{equation*}  \frac{1}{|J|}  \sum_{\varepsilon \in \S} \sum_{I \in \D(J)} (A_I ^\varepsilon) ^* V_I^2 A_I ^\varepsilon  < C V_J ^2  \end{equation*} if $2 \leq p < \infty$, and  \begin{equation*}  \frac{1}{|J|}  \sum_{\varepsilon \in \S} \sum_{I \in \D(J)} A_I ^\varepsilon (V_I ')^2 (A_I ^\varepsilon) ^*  < C (V_J ') ^2  \end{equation*} if $1 < p \leq 2$.
\end{itemize}

\noindent Furthermore, the operator norm in $(a)$ and the canonical supremums in $(b)$ and $(c)$ are equivalent in the sense that they are independent of the sequence $A$.  Finally, a matrix function $B \in \BMOWD$ if and only if the sequence of Haar coefficients of $B$ satisfies any of the above equivalent conditions.
 \end{theorem}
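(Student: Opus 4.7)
The plan is to prove the cycle (a) $\Rightarrow$ (c) $\Leftrightarrow$ (b) $\Rightarrow$ (a), after which the $\BMOWD$ identification becomes a bookkeeping exercise. I would handle $p = 2$ first, where Haar orthonormality gives the clean identity $\|\Pi_A \V{f}\|_2^2 = \sum_{I,\varepsilon}|V_I A_I^\varepsilon m_I(W^{-1/2}\V{f})|^2$, and then lift to general $p$ via vector-valued Littlewood--Paley theory, dualizing separately in the ranges $p \geq 2$ and $1 < p \leq 2$ (which is why (c) is stated in two symmetric forms).

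For (a) $\Rightarrow$ (c), I would test $\Pi_A$ on the ``reducing atoms'' $\V{f}_J^{\V{e}} := \chi_J\,W^{1/p}V_J^{-1}\V{e}$, for which $m_I(W^{-1/p}\V{f}_J^{\V{e}}) = V_J^{-1}\V{e}$ for every $I \in \D(J)$ and $\|\V{f}_J^{\V{e}}\|_p \approx |J|^{1/p}|\V{e}|$ by the defining property of $V_J$. The vector-valued Haar square-function lower bound then yields $\sum_{I \in \D(J),\,\varepsilon}|V_I A_I^\varepsilon V_J^{-1}\V{e}|^2 \lesssim |J||\V{e}|^2$, which is (c) evaluated against $\V{e}$ in the $p \geq 2$ case. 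The equivalence (b) $\Leftrightarrow$ (c) is handled via the equivalence of operator and Hilbert--Schmidt norms on $\Mn$, combined with the $A_\infty$-consequence $V_I^{-1} \sim V_I'$ of matrix $A_p$ and the covering estimates \eqref{SmallBig}--\eqref{BigSmall}, which allow the exchange of the ``outer'' $V_J^{-1}$ in the trace version of (c) with the ``cube-by-cube'' $V_I^{-1}$ in (b).

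The main technical obstacle is the matrix Carleson embedding (c) $\Rightarrow$ (a). At $p = 2$ this is the estimate
\begin{equation*}
\sum_{I,\,\varepsilon}\bigl\langle (A_I^\varepsilon)^* V_I^2 A_I^\varepsilon\,m_I(W^{-1/2}\V{f}),\,m_I(W^{-1/2}\V{f})\bigr\rangle \lesssim \|\V{f}\|_2^2,
\end{equation*}
which I plan to prove via a Nazarov--Treil--Volberg-style principal-cube stopping-time decomposition: select maximal descendants of each top cube along which $|V_J\,m_J(W^{-1/2}\V{f})|$ doubles its parent's value. On each stopping tree $\MC{T}$ with top $J$ the size $|V_I\,m_I(W^{-1/2}\V{f})|$ is essentially frozen at $|V_J\,m_J(W^{-1/2}\V{f})|$, the Carleson packing in (c) gives $\sum_{I \in \MC{T},\,\varepsilon}(A_I^\varepsilon)^* V_I^2 A_I^\varepsilon \lesssim |J|\,V_J^2$, and \eqref{ReverseAp} then converts this into a bound by $|J|\,|V_J\,m_J(W^{-1/2}\V{f})|^2$; summing over all stopping trees via a standard Carleson packing / pigeonhole argument yields $\|\V{f}\|_2^2$ through the $L^2$ bound for the Christ--Goldberg matrix-weighted maximal function developed later in the paper.

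Finally, the $\BMOWD$ identification follows from (b) via the Haar expansion $B - m_I B = \sum_{I' \in \D(I),\,\varepsilon}B_{I'}^\varepsilon h_{I'}^\varepsilon$ on $I$: the matrix-valued Pythagorean / Littlewood--Paley inequality for $L^p(W)$, combined with the reducing-operator identity $\|V_{I'}X\|^p \approx \frac{1}{|I'|}\int_{I'}\|W^{1/p}X\|^p\,dx$ applied column-wise to $X = B_{I'}^\varepsilon V_I^{-1}$, converts the defining supremum of $\BMOWD$ into the Carleson sum in (b); the $1 < p \leq 2$ case is deduced from $p' \geq 2$ via the duality $V_I(W^{1-p'},p') = V_I'(W,p)$ recorded in the introduction.
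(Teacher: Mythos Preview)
There is a genuine gap in your ``(c) $\Rightarrow$ (a)'' step. Your stopping time freezes only the scalar quantity $|V_I\,m_I(W^{-1/2}\V{f})|$ on each tree $\MC{T}$; the vectors $m_I(W^{-1/2}\V{f})$ themselves still vary in \emph{direction} as $I$ ranges over $\MC{T}$. The matrix inequality in (c) can only be tested against one vector at a time, so from $\sum_{I\in\MC{T}}(A_I^\varepsilon)^*V_I^2A_I^\varepsilon\lesssim|J|V_J^2$ you cannot conclude $\sum_{I\in\MC{T}}\langle (A_I^\varepsilon)^*V_I^2A_I^\varepsilon\,m_I,m_I\rangle\lesssim|J|\,|V_Jm_J|^2$ when the $m_I$ differ. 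If you attempt to reduce to a common vector by inserting $V_I^{-1}V_I$ you are forced back to $\sum_{I}\|V_IA_I^\varepsilon V_I^{-1}\|^2\lesssim|J|$, which is (b), not (c); neither \eqref{ReverseAp} nor the frozen size helps here. This ``direction problem'' is exactly what separates the matrix Carleson embedding from its scalar analogue, and a stopping time on $\V{f}$ alone cannot resolve it.

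The paper runs the cycle differently and never needs a stopping time on $\V{f}$. For (b) $\Rightarrow$ (a) one dominates $|V_IA_I^\varepsilon m_I(W^{-1/p}\V{f})|\leq\|V_IA_I^\varepsilon V_I^{-1}\|\,m_I|V_IW^{-1/p}\V{f}|\leq\|V_IA_I^\varepsilon V_I^{-1}\|\,m_I(M_W'\V{f})$, which reduces the whole estimate to the \emph{scalar} paraproduct with symbol $\tilde A=\sum\|V_IA_I^\varepsilon V_I^{-1}\|h_I^\varepsilon$ applied to the scalar function $M_W'\V{f}$; condition (b) is precisely the scalar Carleson condition for $\tilde A$, and the $L^p$ bound for $M_W'$ closes the argument. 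The passage (c) $\Rightarrow$ (b) is then achieved by a stopping time on the \emph{weight} (Lemma~\ref{DSTLem}): one stops at maximal descendants $I$ of a top cube $K$ where $\|V_IV_K^{-1}\|^p$ or $\|V_I^{-1}V_K\|^{p'}$ exceeds a threshold, and on the surviving cubes these norms are bounded, which is exactly what is required to swap the outer $V_K^{-1}$ coming from (c) for the cube-by-cube $V_I^{-1}$ appearing in (b). Your direct (b) $\Leftrightarrow$ (c) sketch has the same underlying problem: the $A_p$ relation does give $\|XV_I^{-1}\|\approx\|XV_I'\|$, but what you actually need is a uniform comparison of $V_I^{-1}$ with $V_J^{-1}$ for nested $I\subseteq J$ of arbitrarily disparate scales, and that is simply not available without the weight-stopping of Lemma~\ref{DSTLem}.
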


\noindent Note that the constants in the equivalence between the operator norm in $(a)$ and the canonical supremums in $(b)$ and $(c)$ of course depend on the A${}_p$ characteristic of $W$, and throughout the proof we will track precisely the nature of this dependence (modulo constants involved in the matrix weighted Triebel-Lizorkin imbedding theorem when $p \neq 2$, since in this case efficient bounds are not known, see Section \ref{SectionTLT} for more details).  Also, note that (as to be expected), we have the following relationship between $\BMOW$ and $\BMOWD$ \begin{proposition}  \label{BMOvsDyadicBMO} There exists dyadic grids $\D^t$ for $t = 1, \ldots, 2^d$ where \begin{equation*} \BMOW  = \bigcup_{t = 1}^{2^d} \BMOWDt. \end{equation*} \end{proposition}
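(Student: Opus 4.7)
The approach is to adapt the classical ``one-third trick'' (or Mei's adjacent dyadic systems construction) to the matrix-weighted setting. The grids $\D^t$ for $t = 1, \ldots, 2^d$ would be taken as translates of the standard dyadic grid by vectors with components in $\{0, \frac{1}{3}\}$, chosen so that every cube $I \subset \Rd$ is contained in some ancestor $\tilde{I} \in \D^t$ (for at least one $t$) with $|\tilde{I}| \leq c_d |I|$ for a purely dimensional constant $c_d$.

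The easy inclusion $\BMOW \subseteq \BMOWDt$ holds for each individual $t$ simply because restricting the defining supremum to $\D^t$-cubes can only decrease it. The nontrivial direction is the reverse: if $B$ satisfies the dyadic BMO condition for the appropriate grid, then its continuous BMO oscillation on an arbitrary cube $I$ must be controlled by its dyadic oscillation on the ancestor $\tilde{I}$. I would focus on the case $2 \leq p < \infty$, since the case $1 < p \leq 2$ follows by symmetry upon applying the same argument to $B^*$ with weight $W^{1 - p'}$, exploiting the identity $V_I(W^{1-p'}, p') = V_I'(W, p)$ noted earlier in the paper.

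The core estimate proceeds in three steps. First, I split $B(x) - m_I B = (B(x) - m_{\tilde{I}} B) + (m_{\tilde{I}} B - m_I B)$ by the triangle inequality, reducing the problem to a ``main'' piece involving $m_{\tilde{I}} B$ and a constant-in-$x$ correction. Second, in the main piece I trade $V_I^{-1}$ for $V_{\tilde{I}}^{-1}$ using \eqref{BigSmall}; this is exactly where the matrix A${}_p$ hypothesis is crucial, contributing an $\|W\|_{\text{A}_p}^{1/p}$ factor. Third, I enlarge the integration from $I$ to $\tilde{I}$, collecting the dimensional factor $|\tilde{I}|/|I| \leq c_d$, which converts the integral average into the dyadic BMO expression on $\tilde{I}$.

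The main obstacle is the constant-in-$x$ remainder $W^{\frac{1}{p}}(x)(m_{\tilde{I}} B - m_I B) V_I^{-1}$. To absorb it, I would write $m_{\tilde{I}} B - m_I B = m_I(m_{\tilde{I}} B - B)$ and use Minkowski's inequality to pass the inner average through the matrix norm, working column-by-column and exploiting the equivalence of the operator and Hilbert-Schmidt norms on $\Mn$ up to dimensional constants. A further application of \eqref{BigSmall} together with the defining relation $\frac{1}{|I|}\int_I |W^{1/p}(x) \vec{e}|^p\,dx \approx |V_I \vec{e}|^p$ then reduces this term to the same dyadic BMO quantity on $\tilde{I}$, completing the argument with a final bound of order $c_d \|W\|_{\text{A}_p} \|B\|_{\BMOWDt}^p$.
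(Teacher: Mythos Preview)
Your overall strategy coincides with the paper's: pick adjacent dyadic grids so that every cube $I$ has a dyadic ancestor $\tilde I\in\D^t$ of comparable side-length, and bound the oscillation over $I$ by the dyadic one over $\tilde I$. The paper carries this out in two lines by invoking Lemma~\ref{InfLem}, which lets one replace $m_I B$ by \emph{any} constant matrix (in particular $m_{\tilde I}B$) at the cost of a factor $(1+\|W\|_{\text{A}_p}^{1/p})$; one then swaps $V_I^{-1}$ for $V_{\tilde I}^{-1}$ via \eqref{BigSmall} and enlarges the domain of integration.

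Your handling of the remainder $W^{1/p}(x)(m_{\tilde I}B-m_I B)V_I^{-1}$ has a gap. After Minkowski and the reducing-operator identity you arrive at
\[
\frac{1}{|I|}\int_I\bigl\|V_I\,(m_{\tilde I}B-B(y))\,V_I^{-1}\bigr\|\,dy,
\]
with the \emph{constant} matrix $V_I$ on the left. But the dyadic BMO quantity on $\tilde I$ carries the pointwise weight $W^{1/p}(y)$ on the left, and neither \eqref{BigSmall} (which compares the reducing operators $V_I$ and $V_{\tilde I}$) nor the defining relation $|V_I\vec e|^p\approx\frac{1}{|I|}\int_I|W^{1/p}(x)\vec e|^p\,dx$ (which only applies when $\vec e$ is constant in the integration variable) converts $V_I$ into $W^{1/p}(y)$ when the vector $(m_{\tilde I}B-B(y))V_I^{-1}\vec e$ itself varies with $y$. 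What is missing is the insertion $V_I=\bigl(V_I W^{-1/p}(y)\bigr)W^{1/p}(y)$ followed by H\"older, so that
\[
\Bigl(\tfrac{1}{|I|}\int_I\|V_I W^{-1/p}(y)\|^{p'}\,dy\Bigr)^{1/p'}\approx\|V_I V_I'\|\lesssim\|W\|_{\text{A}_p}^{1/p}
\]
absorbs the weight mismatch and leaves precisely the ``main piece'' you already know how to handle. This is exactly the mechanism inside Lemma~\ref{InfLem}; once you invoke it your argument goes through and in fact collapses to the paper's proof.
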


  Note that while the two different cases for different $p$ in the definition of $\BMOW$ might seem awkward, it will turn out that Theorem \ref{CommutatorThm} and duality will together prove the following \begin{corollary} \label{BMODefCor}  If $1 < p < \infty$ and $W$ is a matrix A${}_p$ weight, then $B \in \BMOW$ if and only if both  \begin{equation}    \sup_{\substack{I \subset \R^d \\  I \text{ is a cube}}} \frac{1}{|I|} \int_I \|W^\frac{1}{p} (x) (B(x) - m_I B) V_I ^{-1}  \|^p \, dx < \infty \label{Cond} \end{equation} and the dual condition\begin{equation} \sup_{\substack{I \subset \R^d \\  I \text{ is a cube}}} \frac{1}{|I|} \int_I \|W^{-\frac{1}{p}} (x) (B^* (x) - m_I B^*) (V_I ')^{-1}  \|^{p'} \, dx < \infty \label{DualCond} \end{equation} are true. \end{corollary}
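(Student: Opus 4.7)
The direction ``$\Leftarrow$'' is immediate: if both \eqref{Cond} and \eqref{DualCond} hold then in particular the defining condition of $\BMOW$ for the given $p$ (case (a) if $p\geq 2$, case (b) if $p\leq 2$) is satisfied, so $B\in\BMOW$.

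For ``$\Rightarrow$'', I plan to apply Theorem~\ref{CommutatorThm} twice, exchanging $L^p(W)$ for its dual via the unweighted pairing $(f,g)\mapsto \int\langle f,g\rangle\,dx$. Assume $B\in\BMOW$. Theorem~\ref{CommutatorThm} gives that $[T,B]$ is bounded on $L^p(W)$ for every Riesz transform $T$. Since Riesz transforms satisfy $T^* = -T$, the adjoint identity $[T,B]^* = [T,B^*]$ together with the duality $(L^p(W))^* \cong L^{p'}(W^{1-p'})$ forces $[T,B^*]$ to be bounded on $L^{p'}(W^{1-p'})$. Since $W^{1-p'}$ is a matrix $A_{p'}$ weight, a second application of Theorem~\ref{CommutatorThm}, this time on $L^{p'}(W^{1-p'})$ with the function $B^*$, yields $B^* \in \BMOWq$.

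To finish, I unpack both memberships $B\in\BMOW$ and $B^*\in\BMOWq$ via the identities $V_I(W^{1-p'},p') = V_I'(W,p)$, $V_I'(W^{1-p'},p') = V_I(W,p)$, $(W^{1-p'})^{\pm 1/p'} = W^{\mp 1/p}$, together with $(p')' = p$ and $(B^*)^* = B$. After a short case analysis on whether $p\geq 2$ or $p\leq 2$, these two memberships deliver both \eqref{Cond} and \eqref{DualCond}: the defining condition for $B\in\BMOW$ produces one of them directly (case (a) giving \eqref{Cond} when $p\geq 2$, case (b) giving \eqref{DualCond} when $p\leq 2$), while $B^*\in\BMOWq$ produces the other after the substitutions above (case (a) of the dual-space definition collapses to \eqref{DualCond}, and case (b), using $(B^*)^* = B$, collapses to \eqref{Cond}).

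The main obstacle is the careful bookkeeping in this unpacking and, crucially, verifying that Theorem~\ref{CommutatorThm} actually supplies \emph{both} the case-(a) and case-(b) characterizations of the BMO space on $L^{p'}(W^{1-p'})$ (not only the one singled out by the convention based on $p'$). This is where the remark that the proof of Theorem~\ref{CommutatorThm} is essentially symmetric in the two cases plays a role: one case is established directly from the paraproduct/Carleson embedding machinery of Theorems~\ref{ParaThm} and \ref{CarEmbedThm}, and the other is obtained from it by precisely the same duality exchange $(W,p)\leftrightarrow(W^{1-p'},p')$ used above; once both characterizations are in hand, the matching of cases across the duality immediately yields both \eqref{Cond} and \eqref{DualCond}.
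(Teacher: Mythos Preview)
Your overall strategy---pass through Theorem~\ref{CommutatorThm} and use the duality $(L^p(W))^* \cong L^{p'}(W^{1-p'})$---is the same as the paper's. The execution, however, has a genuine gap that you partially diagnose but do not fully repair.

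The problem is that applying Theorem~\ref{CommutatorThm} as a black box twice is circular. The definitions of $\BMOW$ and $\BMOWq$ are deliberately dual: one checks directly that $B\in\BMOW$ is \emph{equivalent} to $B^*\in\BMOWq$ (this is precisely the content of the remark preceding Corollary~\ref{CorForSufficiency}). Concretely, when $p\ge 2$ the defining condition for $B\in\BMOW$ is \eqref{Cond}; since $p'\le 2$, the defining condition for $B^*\in\BMOWq$ is its case~(b), and after the substitutions $(W^{1-p'})^{-1/p'}=W^{1/p}$, $V_I'(W^{1-p'},p')=V_I(W,p)$, $(B^*)^*=B$, $(p')'=p$, that case~(b) unwinds to \eqref{Cond} again, not to \eqref{DualCond}. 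Symmetrically, when $p\le 2$ both memberships collapse to \eqref{DualCond}. So the two black-box applications never produce the missing condition.

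What is actually needed---and what the paper does---is to invoke not the statement of Theorem~\ref{CommutatorThm} but the \emph{proof} of its necessity direction (Theorem~\ref{KernelThm}). That argument shows directly that boundedness of $[T,B]$ on $L^p(W)$ implies \eqref{Cond} for \emph{every} $1<p<\infty$, with no restriction to $p\ge 2$; the restriction in the definition of $\BMOW$ is a convention, not a limitation of the proof. Once $[T,B]$ is bounded on $L^p(W)$ the Janson-type argument gives \eqref{Cond}; then duality yields $[T,B^*]$ bounded on $L^{p'}(W^{1-p'})$, and the same argument (with $p\to p'$, $W\to W^{1-p'}$, $B\to B^*$) produces the $p'$-version of \eqref{Cond} for $B^*$, which is exactly \eqref{DualCond}. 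Your last paragraph gestures at this, but cites the paraproduct/Carleson machinery of Theorems~\ref{ParaThm} and~\ref{CarEmbedThm}, which is dyadic and does not directly address the continuous $\BMOW$ condition; the correct reference is the proof of Theorem~\ref{KernelThm}.
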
 \noindent

Finally, to prove Theorem \ref{CommutatorThm} we will need to characterize matrix weighted norm inequalities for Haar multipliers.  More precisely we will prove the following

\begin{proposition} \label{HaarMultThm} Let $1 < p < \infty$ and let $W$ be a matrix A${}_p$ weight.  If $\D$ is any dyadic grid and $A := \{A_I ^\varepsilon \}_{I \in \D, \varepsilon \in \S}$ is a sequence of matrices, then the Haar multiplier \begin{equation*} T_A \vec{f} := \sum_{I \in \D} \sum_{\varepsilon \in \S} A_I ^\varepsilon {\vec{f}}_I ^\varepsilon  h_I ^\varepsilon \end{equation*} is bounded on $L^p(W)$ if and only if \begin{equation*} \sup_{I \in \D, \varepsilon \in \S} \|V_I A_I ^\varepsilon V_I ^{-1}\| < \infty. \end{equation*}  \end{proposition}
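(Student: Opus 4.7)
The plan is to pass to the matrix-weighted Littlewood--Paley/Triebel--Lizorkin description of $L^p(W)$ from Section \ref{SectionTLT}, namely the norm equivalence
\begin{equation*}
\|\vec{f}\|_{L^p(W)}^p \approx \int_{\Rd} \Big(\sum_{I \in \D} \sum_{\varepsilon \in \S} \frac{|V_I \vec{f}_I^\varepsilon|^2}{|I|}\chi_I(x)\Big)^{p/2} \, dx,
\end{equation*}
which reduces the characterization to a pointwise comparison of square functions in the Haar coefficients.

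For sufficiency, set $M := \sup_{I,\varepsilon} \|V_I A_I^\varepsilon V_I^{-1}\|$ and observe that since $T_A\vec{f}$ has Haar coefficients $(T_A\vec{f})_I^\varepsilon = A_I^\varepsilon \vec{f}_I^\varepsilon$, we may write
\begin{equation*}
|V_I (T_A\vec{f})_I^\varepsilon| = |(V_I A_I^\varepsilon V_I^{-1})(V_I \vec{f}_I^\varepsilon)| \leq M\, |V_I \vec{f}_I^\varepsilon|.
\end{equation*}
Hence the square function of $T_A\vec{f}$ is pointwise dominated by $M$ times the square function of $\vec{f}$, and applying the Triebel--Lizorkin equivalence in both directions gives $\|T_A\|_{L^p(W)\to L^p(W)} \lesssim M$.

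For necessity, we test $T_A$ on single Haar functions: for $I_0 \in \D$, $\varepsilon_0 \in \S$ and $\vec{e} \in \C$, let $\vec{f}(x) = h_{I_0}^{\varepsilon_0}(x)\vec{e}$, so that $T_A\vec{f} = (A_{I_0}^{\varepsilon_0}\vec{e})\, h_{I_0}^{\varepsilon_0}$. Since $|h_{I_0}^{\varepsilon_0}|^p = |I_0|^{-p/2}\chi_{I_0}$, John's ellipsoid and the reducing-operator inequalities give
\begin{equation*}
\|\vec{f}\|_{L^p(W)}^p \approx |I_0|^{1 - p/2} |V_{I_0}\vec{e}|^p, \qquad \|T_A\vec{f}\|_{L^p(W)}^p \approx |I_0|^{1 - p/2} |V_{I_0} A_{I_0}^{\varepsilon_0}\vec{e}|^p.
\end{equation*}
Thus the boundedness of $T_A$ forces $|V_{I_0}A_{I_0}^{\varepsilon_0}\vec{e}| \lesssim \|T_A\|_{L^p(W) \to L^p(W)}\, |V_{I_0}\vec{e}|$, and substituting $\vec{e} \mapsto V_{I_0}^{-1}\vec{e}$ followed by taking the supremum over $\vec{e}$, $I_0$, and $\varepsilon_0$ yields the stated uniform bound on $\|V_{I_0} A_{I_0}^{\varepsilon_0} V_{I_0}^{-1}\|$.

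The main obstacle is the Triebel--Lizorkin equivalence invoked in the sufficiency step: for $p \neq 2$ this is a genuinely deep input from the matrix $A_p$ theory, and is precisely what obstructs sharp quantitative tracking of $\|W\|_{A_p}$, as flagged in the introduction. For $p = 2$, by contrast, the sufficiency collapses to $L^2$ orthogonality of the Haar basis together with the reducing-operator equivalence $|V_I \vec{e}|^2 \approx \langle (m_I W) \vec{e}, \vec{e}\rangle$ applied to $W^{1/2}\vec{f}$, and yields a quantitative estimate depending only on $\|W\|_{A_2}$.
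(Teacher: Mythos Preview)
Your sufficiency argument is identical to the paper's: two applications of the Triebel--Lizorkin equivalence \eqref{LpEmbedding} sandwiching the pointwise bound $|V_I A_I^\varepsilon \vec{f}_I^\varepsilon| \leq M |V_I \vec{f}_I^\varepsilon|$.

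Your necessity argument, however, is genuinely different and more elementary. You test $T_A$ directly on $h_{I_0}^{\varepsilon_0}\vec{e}$ and compute both $L^p(W)$ norms straight from the definition of the reducing operator $V_{I_0}$, using only that $|h_{I_0}^{\varepsilon_0}| = |I_0|^{-1/2}\chi_{I_0}$. The paper instead passes to the Littlewood--Paley side via \eqref{LpEmbedding}, tests on $\chi_{J_0'}\vec{e}$ for a child $J_0'$ of $J_0$, computes the single Haar coefficient $(W^{-1/p}\chi_{J_0'}\vec{e})_{J_0}^{\varepsilon'}$ as a multiple of $m_{J_0'}(W^{-1/p})\vec{e}$, and then invokes Lemma~\ref{RedOp-AveLem} to replace $m_{J_0'}(W^{-1/p})$ by $V_{J_0'}'$, followed by a sum over children and \eqref{SmallBig}. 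Your route avoids Lemma~\ref{RedOp-AveLem} entirely and does not require the A${}_p$ hypothesis for the necessity direction (only the definition of $V_I$). What the paper's detour buys is an explicit control $\|V_J A_J^\varepsilon V_J'\| \lesssim \|W\|_{\text{A}_p}^{n/p}\|T_A\|$ involving $V_J'$ rather than $V_J^{-1}$, which feeds more naturally into their quantitative bookkeeping; your direct argument gives the cleaner qualitative statement with fewer moving parts.
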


\subsection{The scalar setting}
Let us now make a few comments about these results in the scalar seting.  First,  it is very easy to see that a scalar function $b$ is in $\BMOW$ if and only if $b \in \text{BMO}$ when $W$ is a matrix A${}_p$ weight of the form $W(x) = w(x) \text{Id}_{n \times n}$ for a scalar A${}_p$ weight $w$.  In particular, for each dyadic grid $\D$, condition (a) in Theorem \ref{CarEmbedThm} is trivially equivalent to $b \in \text{BMO}_{\D}$, which given Proposition \ref{BMOvsDyadicBMO} clearly proves the claim. Furthermore, it is well known that $\pi_b$ is bounded on $L^p(w)$ if and only if $b \in \BMO$ (when $w$ is a scalar A${}_p$ weight, see \cite{Bez}).

Moreover, note that when $p = 2$,  a careful tracking of the $\Atwo{W}$ characteristic contribution from the implication (c) $\Rightarrow$ (a) in Theorem \ref{CarEmbedThm} gives us the following  (after replacing $\vec{f}$ with $W^\frac{1}{2} \vec{f}$ and replacing $A_I ^\varepsilon$ with $(m_I W) ^{- \frac{1}{2}} A_I ^\varepsilon$ )

\begin{corollary} \label{CarEmbedCondp=2} If $W$ is a matrix A${}_2$ weight, $\D$ is a dyadic grid, and $\{A_I ^\varepsilon\}$ is any sequence of $n \times n$ matrices satisfying

\begin{equation} \sum_{\varepsilon \in \S}\sum_{I \in \D(J)}  (A_I ^\varepsilon) ^* A_I ^\varepsilon     < C \int_J W (x) \, dx  \nonumber  \end{equation}  for all $J \in \D$ (where $C$ is independent of $J$) then
\begin{equation} \sum_{\varepsilon \in \S} \sum_{I \in \D} |A_I ^\varepsilon (m_I \vec{f})|^2 \lesssim C ^\frac12 \Atwo{W}^3 \|\vec{f}\|_{L^2(W)} ^2. \nonumber \end{equation} \end{corollary}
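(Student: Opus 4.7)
The plan is to deduce this as a direct consequence of the implication (c) $\Rightarrow$ (a) in Theorem \ref{CarEmbedThm} with $p=2$, via the two substitutions indicated in the remark just preceding the statement, while carefully tracking the $\Atwo{W}$ dependence coming from the proof of that implication.

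First I would record that when $p=2$ we may take $V_I = (m_I W)^{1/2}$, so condition (c) reads
\[\frac{1}{|J|}\sum_{\varepsilon \in \S}\sum_{I \in \D(J)} (A_I^\varepsilon)^* (m_I W)\, A_I^\varepsilon < C\, (m_J W).\]
Replacing $A_I^\varepsilon$ by $V_I^{-1} A_I^\varepsilon = (m_I W)^{-1/2}A_I^\varepsilon$, this becomes exactly the hypothesis of the corollary, namely
\[\sum_{\varepsilon \in \S}\sum_{I \in \D(J)} (A_I^\varepsilon)^* A_I^\varepsilon < C \int_J W(x)\,dx,\]
after clearing $|J|$. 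Thus the sequence $\{V_I^{-1} A_I^\varepsilon\}$ satisfies condition (c) of Theorem \ref{CarEmbedThm} with constant $C$.

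Next I would invoke the quantitative version of (c) $\Rightarrow$ (a) in Theorem \ref{CarEmbedThm}: with the careful bookkeeping of constants promised in the text preceding the corollary, this should yield
\[\|\Pi_{V^{-1}A}\|_{L^2 \to L^2}^{2} \lesssim C^{1/2}\Atwo{W}^{3}\]
(so $\|\Pi_{V^{-1}A}\| \lesssim C^{1/4}\Atwo{W}^{3/2}$). Expanding the definition of $\Pi_{V^{-1}A}$ with the substituted sequence, one gets
\[\Pi_{V^{-1}A}\vec{g} = \sum_{\varepsilon \in \S}\sum_{I \in \D} A_I^\varepsilon\, m_I(W^{-1/2}\vec{g})\, h_I^\varepsilon,\]
and by orthonormality of the Haar system its squared $L^2$-norm equals $\sum_{\varepsilon,I}|A_I^\varepsilon\, m_I(W^{-1/2}\vec{g})|^2$. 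Substituting $\vec{g}=W^{1/2}\vec{f}$ transforms the input average into $m_I \vec{f}$ and the $L^2$-norm of $\vec g$ into $\|\vec f\|_{L^2(W)}$, producing exactly the stated inequality.

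The main obstacle is not the substitution procedure itself (which is algebraic) but the extraction of the specific exponent $3$ on $\Atwo{W}$ and $1/2$ on $C$ in the quantitative (c) $\Rightarrow$ (a) step. That requires re-examining the proof of Theorem \ref{CarEmbedThm} in the $p=2$ case and tracking where factors of $\|V_I V_I'\|^2 \leq \Atwo{W}$ enter (which is presumably at the A$_2$ pairing used to pass between condition (c) and the Carleson embedding estimate), and being careful that the logarithmic factors present in \eqref{ParaQuant} do not appear here because $\|B\|_*$ is replaced by $C^{1/2}$ and the squaring convention. Once that accounting is done, the corollary falls out immediately.
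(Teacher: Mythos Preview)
Your proposal is correct and follows exactly the approach the paper indicates in the sentence preceding Corollary \ref{CarEmbedCondp=2}: apply the quantitative form of (c) $\Rightarrow$ (a) from Theorem \ref{CarEmbedThm} at $p=2$ to the sequence $\{(m_I W)^{-1/2}A_I^\varepsilon\}$ and then substitute $\vec{g}=W^{1/2}\vec{f}$. Your observation that no logarithmic factor enters is also right, since here one bounds $\|\Pi_{V^{-1}A}\vec{g}\|_{L^2}$ directly via Haar orthonormality rather than passing through the weighted square function estimate \eqref{LowerBound} used for \eqref{ParaQuant}.
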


Interestingly, note that Corollary \ref{CarEmbedCondp=2} in the scalar $d = 1$ setting appears as Lemma $5.7$ in \cite{P} for scalar A${}_\infty$ weights and was implicitly used in sharp form with quadratic $\Atwo{W}$ characteristic (versus cubic above) by O. Beznosova in \cite{Bez} (see $(2.4)$ and $(2.5)$ in \cite{Bez}) to prove sharp weighted norm inequalities for scalar paraproducts.

Also, a similar $p = 2$ matrix weighted Carleson embedding theorem for positive semidefinite  sequences was proved in \cite{BW1} using virtually the same argument as the one used to prove Theorem \ref{CarEmbedThm}. Additionally, note that a version of Corollary \ref{CarEmbedCondp=2} for positive semidefinite sequences that does not require $W$ to be a matrix A${}_2$ weight was very recently proved in \cite{CT}.  While this result is obviously of great potential for proving sharp matrix A${}_2$ results, it is not clear whether one can prove Corollary \ref{CarEmbedCondp=2} using the results in \cite{CT}.

\subsection{Outline of paper}
We will now briefly outline the contents of the paper.  In Section $2$ we will prove Theorems \ref{CarEmbedThm},  \ref{ParaThm}, and also prove  \eqref{ParaQuant}.  In Section $3$ we will prove Proposition \ref{HaarMultThm} and use this in conjunction with Theorem \ref{ParaThm} to prove Theorem \ref{CommutatorThm} and \eqref{CommQuant}.  Additionally, we will give short proofs of Proposition \ref{BMOvsDyadicBMO} and Corollary \ref{BMODefCor} in Section $3$.  Finally,   in the last section we will provide very explicit ``counterexamples" to Proposition \ref{HaarMultThm}, Theorem \ref{ParaThm}, and Theorem \ref{CommutatorThm} in the sense that, as one would expect, none of these results are true for arbitrary matrix valued symbols and matrix A${}_2$ weights.

We will also prove some simple yet nonetheless interesting results involving quantitative matrix weighted norm inequalities for objects related to maximal functions.  In particular, we will prove sharp $L^2$ estimates for the Christ/Goldberg matrix weighted maximal function from  \cite{CG}, prove weak type estimates for ``the" universal $p = 2$ matrix weighted maximal function for not necessarily matrix A${}_2$ weights, and give a simple ``maximal function" proof of the matrix weighted bounds for sparse operators from \cite{BW}.   While these results are not needed to prove any of our main results, their proofs require some ideas utilized in this paper and clearly complement \eqref{ParaQuant} and \eqref{CommQuant}

Finally, we will remark that well after this paper was written, the first author in \cite{I3} has proved that in fact $B \in \BMOW \Leftrightarrow \eqref{Cond} \Leftrightarrow \eqref{DualCond}$, and that a similar result holds for $\BMOWD$.  Furthermore, it is very interesting to note that most of the techniques in this paper are in fact ``two weight" techniques in that slight modifications to them allow for extensions of Theorems \ref{ParaThm} and \ref{CommutatorThm} to the $L^p(U) \rightarrow L^p(W)$ setting where $U, W$ are matrix A${}_p$ weights, see \cite{I3} for more details.

\section{Proof of Theorem $\ref{ParaThm}$} The main goal of this section is to prove Theorem \ref{ParaThm} via Theorem \ref{CarEmbedThm}.  Before we do either, however, we will need to discuss some preliminary results.

\subsection{Matrix weighted Littlewood-Paley theory} \label{SectionTLT}

We will now need the ``matrix weighted Triebel-Lizorkin imbedding theorem" from \cite{NT, V}, which say that if $W$ is a matrix A${}_p$ weight then \begin{equation} \label{LpEmbedding}  \|\vec{f}\|_{L^p(W)} ^p  \approx \inrd \left(\sum_{I \in \D} \sum_{\varepsilon \in \S}  \frac{| V_I \vec{f}_I ^\varepsilon|^2}{|I|} \chi_I(x) \right)^\frac{p}{2} \, dx  \end{equation} where $\vec{f}_I ^\varepsilon$ is the vector of Haar coefficients of the components of $\vec{f}$. Note that these were only proved in the $d = 1$ setting in \cite{NT, V}, though a very simple proof that works for $\Rd$ was given by the first author in \cite{I1}. Furthermore, note that when $p = 2$ the above ``Littlewood-Paley expression" reduces to a matrix weighted dyadic square function, and in this setting it is known that one has the quantitative bounds (see \cite{BPW} for $d = 1$ and \cite{CW} for $d > 1$)

\begin{equation} \label{UpperBound} \left( \sum_{I \in \D} \sum_{\varepsilon \in \S} |(m_I W )^\frac12 \vec{f}_I ^\varepsilon |^2 \right)^\frac12 \lesssim \Atwo{W} (\log \Atwo{W})^\frac12 \|\vec{f}\|_{L^2(W)} \end{equation} and \begin{equation} \label{LowerBound} \|\vec{f}\|_{L^2(W)} \lesssim  \Atwo{W} ^\frac12  (\log \Atwo{W})^\frac12 \left( \sum_{I \in \D} \sum_{\varepsilon \in \S} |(m_I W )^\frac12 \vec{f}_I ^\varepsilon |^2 \right)^\frac12.  \end{equation} Unfortunately, while one can attempt to track the matrix A${}_p$ dependence in \eqref{LpEmbedding} from the arguments in either \cite{NT}, \cite{TV}, or \cite{I1}, when $p \neq 2$, it is very unlikely that any of the arguments in these papers provide efficient bounds similar to the ones in \eqref{UpperBound} or \eqref{LowerBound} when $p = 2$. With this in mind, it will be implicit that all inequalities involving \eqref{LpEmbedding} when $p \neq 2$ involve matrix A${}_p$ dependence and we will not further comment on this.

\subsection{Preliminary lemmas}

Before we prove Theorem \ref{CarEmbedThm} we will need the following three preliminary results, the first of which is from \cite{NT}, p. 49.

\begin{lemma} \label{MatrixNormLem} Suppose that $A$ is an $n \times n$ matrix where $|A \vec{e}| \geq |\vec{e}|$ for any $\vec{e} \in \C$.  If $|\det A | \leq \delta $ for some $\delta \geq 0$, then $\|A\| \leq \delta$ where $\|\cdot\|$ is the canonical matrix norm on $\Mn$.    \end{lemma}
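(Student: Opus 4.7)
The plan is to argue via the singular value decomposition (SVD) of $A$. Write $A = U \Sigma V^*$ with $U, V$ unitary and $\Sigma = \operatorname{diag}(\sigma_1, \ldots, \sigma_n)$, where $\sigma_1 \geq \sigma_2 \geq \cdots \geq \sigma_n \geq 0$ are the singular values of $A$. Recall that $\|A\| = \sigma_1$ and $|\det A| = \prod_{i=1}^n \sigma_i$.

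First, I would use the lower bound hypothesis $|A\vec{e}| \geq |\vec{e}|$ to control the smallest singular value from below. By unitary invariance of the Euclidean norm, the hypothesis translates to $|\Sigma \vec{f}| \geq |\vec{f}|$ for every $\vec{f} \in \C$. Testing this against the standard basis vectors $\vec{f} = \vec{e}_i$ gives $\sigma_i \geq 1$ for each $i$, and in particular $\sigma_n \geq 1$.

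Next, I would combine this with the determinant bound. Since $\sigma_i \geq 1$ for $i = 2, \ldots, n$, we have
\begin{equation*}
\sigma_1 \;=\; \frac{|\det A|}{\prod_{i=2}^n \sigma_i} \;\leq\; |\det A| \;\leq\; \delta,
\end{equation*}
which is exactly $\|A\| \leq \delta$ and completes the proof. (Incidentally, the hypothesis forces $\delta \geq 1$, since $|\det A| = \prod \sigma_i \geq 1$.)

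There is no real obstacle here; the lemma is essentially a one-line consequence of SVD, with the only substantive point being the observation that a uniform lower bound on $|A\vec{e}|$ passes to a lower bound on every singular value, not merely on the smallest one (which in this finite-dimensional setting coincide anyway). I do not anticipate needing anything beyond the standard existence of SVD and the multiplicativity of the determinant.
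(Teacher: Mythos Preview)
Your proof is correct. The SVD reduction cleanly turns the hypothesis $|A\vec{e}|\geq|\vec{e}|$ into $\sigma_i\geq 1$ for every singular value, after which $\|A\|=\sigma_1=|\det A|/\prod_{i\geq 2}\sigma_i\leq|\det A|\leq\delta$ is immediate.

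There is nothing to compare against: the paper does not supply its own proof of this lemma but simply cites \cite{NT}, p.~49. Your argument is a perfectly standard and complete justification.
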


\begin{lemma}\label{RedOp-AveLem}  If $W$ is a matrix A${}_p$ weight then \begin{equation*} |  V_I '  \vec{e} | \approx  |m_I (W^{-\frac{1}{p}}) \vec{e}  | \end{equation*} for any $\vec{e} \in \C$.   In particular, \begin{equation*} |m_I (W^{-\frac{1}{p}}) \vec{e}| \leq |V_I ' \vec{e} | \leq \|W\|_{\text{A}_p} ^\frac{n}{p}  |m_I (W^{-\frac{1}{p}}) \vec{e}|.  \end{equation*}  \end{lemma}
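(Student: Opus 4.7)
The plan is to handle the two inequalities separately. The lower bound $|m_I(W^{-1/p})\vec{e}| \leq |V_I'\vec{e}|$ is immediate from moving an average inside the norm: since integration is a convex combination,
\[
|m_I(W^{-1/p})\vec{e}| = \bigl|m_I(W^{-1/p}(x)\vec{e})\bigr| \leq \frac{1}{|I|}\int_I |W^{-1/p}(x)\vec{e}|\,dx \leq \rho^*_{W,I,p}(\vec{e}) \leq |V_I' \vec{e}|,
\]
where the second inequality is $\|\cdot\|_{L^1(I)} \leq \|\cdot\|_{L^{p'}(I)}$ and the last is the defining (John's ellipsoid) property of the reducing operator $V_I'$.

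For the upper bound, the strategy is to invoke Lemma \ref{MatrixNormLem} applied to $A := V_I' \bigl(m_I(W^{-1/p})\bigr)^{-1}$. The lower bound just established reads $|A\vec{f}| \geq |\vec{f}|$ for every $\vec{f} \in \C$, so it suffices to control $|\det A| = \det V_I' / \det(m_I(W^{-1/p}))$. The matrix $A_p$ condition provides $\|V_I V_I'\|^p \approx \|W\|_{\text{A}_p}$, hence
\[
\det V_I \cdot \det V_I' = \det(V_I V_I') \leq \|V_I V_I'\|^n \lesssim \|W\|_{\text{A}_p}^{n/p},
\]
and by the analogue of the lower bound applied to $V_I$ (using $\rho_{W,I,p}$ in place of $\rho^*_{W,I,p}$) one has $\det(m_I(W^{1/p})) \leq \det V_I$. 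Combining these reductions shows that the upper bound will follow from the single determinantal inequality
\[
\det\bigl(m_I W^{1/p}\bigr) \cdot \det\bigl(m_I W^{-1/p}\bigr) \geq 1.
\]

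The main step is proving this last inequality; I view it as the crux of the lemma. My plan is to use the concavity of $A \mapsto \log\det A$ on the cone of positive-definite matrices (Ky~Fan / Minkowski's determinant inequality). Jensen's inequality applied to the uniform probability measure on $I$ yields
\[
\log\det\bigl(m_I W^{\pm 1/p}\bigr) \geq m_I \log\det W^{\pm 1/p}.
\]
Adding the two choices of sign and using that $W^{1/p}(x) W^{-1/p}(x) = \mathrm{Id}_{n\times n}$ pointwise — so that $\log\det W^{1/p}(x) + \log\det W^{-1/p}(x) = 0$ for a.e.\ $x \in I$ — produces
\[
\log\det\bigl(m_I W^{1/p}\bigr) + \log\det\bigl(m_I W^{-1/p}\bigr) \geq 0,
\]
which exponentiates to the required bound. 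Assembling the chain gives $|\det A| \leq \|W\|_{\text{A}_p}^{n/p}$; Lemma \ref{MatrixNormLem} then promotes this to the operator-norm bound $\|A\| \leq \|W\|_{\text{A}_p}^{n/p}$, and applying $A$ to $m_I(W^{-1/p})\vec{e}$ delivers the desired upper bound.
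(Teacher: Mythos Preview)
Your proof is correct and shares the overall skeleton of the paper's argument: both establish the lower bound by Jensen's inequality, then reduce the upper bound to a determinant estimate for $A = V_I'\bigl(m_I W^{-1/p}\bigr)^{-1}$ and invoke Lemma~\ref{MatrixNormLem}. The difference lies in how the bound $\det A \le \|W\|_{\text{A}_p}^{n/p}$ is obtained. The paper works directly with the $\text{A}_p$ condition on vectors: from $\exp\bigl(m_I\log|W^{1/p}\vec e\,|\bigr) \le \|W\|_{\text{A}_p}^{1/p}\,|(V_I')^{-1}\vec e\,|$ it sums over an orthonormal eigenbasis of $(V_I')^{-1}$ to obtain $\det V_I' \le \|W\|_{\text{A}_p}^{n/p}\exp\bigl(m_I\log\det W^{-1/p}\bigr)$, and then applies the matrix Jensen inequality $\exp\bigl(m_I\log\det W^{-1/p}\bigr) \le \det m_I(W^{-1/p})$. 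Your route instead factors through $V_I$: you use $\det V_I\,\det V_I' \le \|V_IV_I'\|^{n} \lesssim \|W\|_{\text{A}_p}^{n/p}$, replace $\det V_I$ by the smaller $\det m_I(W^{1/p})$ via the symmetric lower-bound argument, and close with the pleasant inequality $\det\bigl(m_IW^{1/p}\bigr)\,\det\bigl(m_IW^{-1/p}\bigr) \ge 1$ coming from log-concavity of $\det$. Your version is a bit more symmetric and avoids the eigenbasis computation; the paper's version keeps the $\text{A}_p$ constant entering at a single step. Both ultimately rest on the same concavity of $\log\det$, so the approaches are close cousins rather than genuinely distinct.
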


\begin{proof} First we show that \begin{equation*} \|V_I' \left(m_I (W^{-\frac{1}{p}}) \right)^{-1}\| \leq \|W\|_{\text{A}_p} ^\frac{n}{p},  \end{equation*}  which will prove half of the lemma.  Furthermore, note that the proof of this inequality will in fact also complete the other half of the proof.  Since $W$ is a matrix A${}_{p}$ weight, Jensen's inequality gives us that \begin{equation} \label{JenEq} \exp\left[ \frac{1}{|I|} \int_I \log |W^{\frac{1}{p}} (x) \vec{e}| \, dx \right] \leq \|W\|_{\text{A}_p} ^\frac{1}{p}  |(V_I ')^{-1} \vec{e}|. \end{equation}

We now prove that $\det V_I' (m_I (W^{- \frac{1}{p}})^{-1}) \leq \|W\|_{\text{A}_p} ^\frac{n}{p}$ by using some arguments in the proof of Proposition $2.2$ in \cite{V}.  First, as was commented in \cite{V}, $\det Q \leq \Pi_{i = 1}^n |Q \vec{e}_i|$ for any orthonormal basis $\{\vec{e}_i\}_{i = 1}^n$ of $\C$ and any positive definite $Q$.  Now for fixed $I$ let $\{\vec{e}_i\}_{i = 1} ^n$ be an orthornormal basis of $\C$ consisting of eigenvectors of $(V_I ')^{-1}$.  Applying \eqref{JenEq} to each $\vec{e}_i$, taking logarithms, summing,  and using the above inequality gives us that \begin{align*} \frac{1}{|I|} \int_I \log \det W^{\frac{1}{p}} (x) \, dx  & \leq
\sum_{i = 1}^n \frac{1}{|I|} \int_I  \log |W^{\frac{1}{p}}(x) \vec{e}_i | \, dx \\ & \leq \log \|W\|_{\text{A}_p} ^{\frac{n}{p}} + \log \Pi_{i = 1}^n  |(V_I ') ^{-1} \vec{e}_i| \\ & = \log \|W\|_{\text{A}_p} ^{\frac{n}{p}} + \log \det (V_I ')^{-1} \end{align*}
 or equivalently \begin{equation*} \log \det V_I ' \leq \log \|W\|_{\text{A}_p} ^\frac{n}{p} + \frac{1}{|I|} \int_I  \log \det W^{-\frac{1}{p}} (x) \, dx \end{equation*} so that \begin{equation*} \det V_I ' \leq \|W\|_{\text{A}_p} ^\frac{n}{p} \exp \left( m_I \log \det (W^{-\frac{1}{p}})  \right) \end{equation*} for any $I \in \D$.  Combining this with the matrix Jensen inequality (Lemma $7.2$ in \cite{NT}) we have that \begin{equation*}  \det V_I ' \leq \|W\|_{\text{A}_p} ^\frac{n}{p} \det m_I (W^{-\frac{1}{p}}) \end{equation*} so that $\det V_I' (m_I (W^{- \frac{1}{p}})^{-1}) \leq \|W\|_{\text{A}_p} ^\frac{n}{p}$.

Moreover, note that for any $\vec{e} \in \C$ we have \begin{align*} |m_I (W^{- \frac{1}{p}}) \vec{e} |  \leq \frac{1}{|I|} \int_I |W^{- \frac{1}{p}} (x) \vec{e} | \, dx   \leq \left(\frac{1}{|I|} \int_I |W^{- \frac{1}{p}} (x) \vec{e} |^{p'} \, dx \right)^\frac{1}{p'} \leq |V_I ' \vec{e}| \end{align*} which means that \begin{equation*} |V_I' (m_I (W^{- \frac{1}{p}})^{-1}) \vec{e} | \geq |\vec{e}| \end{equation*} for any $\vec{e} \in \C$.   The proof now follows immediately from Lemma \ref{MatrixNormLem}.

\end{proof}

Finally, we will need the following $\Rd$ version of the classical dyadic Carleson Lemma from \cite{P}.  Note that the proof is almost identical to the proof of Lemma $5.3$ in \cite{P} and will therefore be omitted.

\begin{proposition} \label{CarLem} Let $\{\lambda_I ^\varepsilon : I \in \D, \, \varepsilon \in \S\}$ be a ``Carleson sequence" of positive numbers in the sense that \begin{equation*} \sup_{J \in \D} \frac{1}{|J|} \sum_{I \in \D(J)} \sum_{\epsilon \in \S} \lambda_I ^\varepsilon \leq C < \infty. \end{equation*}  Then for any positive sequence of real numbers $\{a_I\}$, we have that \begin{equation*}\sum_{I \in \D} \sum_{\varepsilon \in \S} a_I \lambda_I ^\varepsilon \leq C \inrd a^*(x) \, dx \end{equation*} where $a^*(x) = \sup_{I \ni x} a_I $.  \end{proposition}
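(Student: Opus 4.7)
The plan is to combine a layer-cake decomposition of the weights $a_I$ with a maximal-cube stopping argument, reducing the sum to the defining Carleson estimate applied to a pairwise disjoint family of top cubes.

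First, since all quantities are nonnegative, I would write $a_I = \int_0^\infty \chi_{\{a_I > t\}}\,dt$ and apply Tonelli to interchange sum and integral, obtaining
\[
\sum_{I \in \D}\sum_{\varepsilon \in \S} a_I \lambda_I^\varepsilon \;=\; \int_0^\infty \Bigl( \sum_{\varepsilon \in \S} \sum_{\substack{I \in \D \\ a_I > t}} \lambda_I^\varepsilon \Bigr) dt.
\]

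Next, for each fixed $t > 0$, I would set $\mathcal{E}_t := \{I \in \D : a_I > t\}$ and let $\mathcal{M}(t) \subseteq \mathcal{E}_t$ denote its family of maximal dyadic cubes. Assuming $\int_\Rd a^*(x)\,dx < \infty$ (otherwise there is nothing to prove), the level set $\Omega_t := \{x \in \Rd : a^*(x) > t\}$ has finite measure, so $\mathcal{E}_t$ cannot contain dyadic cubes of arbitrarily large side length and $\mathcal{M}(t)$ is well-defined. By construction the cubes in $\mathcal{M}(t)$ are pairwise disjoint, every $I \in \mathcal{E}_t$ is contained in some $J \in \mathcal{M}(t)$, and $\Omega_t = \bigsqcup_{J \in \mathcal{M}(t)} J$. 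Applying the Carleson hypothesis on each such $J$ and summing then gives
\[
\sum_{\varepsilon \in \S}\sum_{I \in \mathcal{E}_t} \lambda_I^\varepsilon \;\leq\; \sum_{J \in \mathcal{M}(t)}\sum_{\varepsilon \in \S}\sum_{I \in \D(J)} \lambda_I^\varepsilon \;\leq\; C \sum_{J \in \mathcal{M}(t)} |J| \;=\; C\,|\Omega_t|.
\]

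To finish, I would integrate this estimate against $dt$ and invoke the layer-cake identity $\int_0^\infty |\Omega_t|\,dt = \int_\Rd a^*(x)\,dx$ to obtain the desired bound. The one point that deserves care is the verification that the family of maximal cubes exists and exhausts $\mathcal{E}_t$; as indicated above, this is handled by reducing to the regime where the right-hand side of the proposition is finite. Everything else is Tonelli's theorem and the layer-cake formula.
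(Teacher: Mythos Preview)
Your argument is correct and is exactly the standard layer-cake plus maximal-cube proof of the dyadic Carleson lemma; the paper itself omits the proof and simply cites Lemma~5.3 of \cite{P}, whose argument is the one you have reproduced. Your handling of the existence of maximal cubes via the finiteness of $\int_{\Rd} a^*(x)\,dx$ is the appropriate justification and matches the usual treatment.
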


\subsection{Stopping time} Let us now review the surprisingly useful stopping time from \cite{I1}, which is a matrix $p \neq 2$ adaption of the stopping time from \cite{KP, Pott}.    Assume that $W$ is a matrix A${}_p$ weight. For any cube $I \in \D$, let $\J(I)$ be the collection of maximal $J \in \D(I)$ such that \begin{equation}      \|V_J V_I^{-1}\|^p  > \lambda_1 \    \text{   or   }  \  \|V_J^{-1} V_I\|^{p'}  > \lambda_2 \label{STDef} \end{equation} for some $\lambda_1, \lambda_2 > 1$ to be specified later.  Also, let $\F(I)$ be the collection of dyadic subcubes of $I$ not contained in any cube $J \in \J(I)$, so that clearly $J \in \F(J)$ for any $J \in \D(I)$.

Let $\J^0 (I) := \{I\}$ and inductively define $\J^j(I)$ and $\F^j(I)$ for $j \geq 1$ by $\J^j (I) := \bigcup_{J \in \J^{j - 1} (I)} \J(J)$ and $\F^j (I) := \bigcup_{J \in \J^{j - 1} (I)} \F(J)$. Clearly the cubes in $\J^j(I)$ for $j > 0$ are pairwise disjoint.  Furthermore, since $J \in \F(J)$ for any $J \in \D(I)$, we have that $\D(I) = \bigcup_{j = 0}^\infty \F^j(I)$.  We will slightly abuse notation and write $\bigcup \J(I)$ for the set $\bigcup_{J \in \J(I)} J$ and write $|\bigcup \J(I)|$ for $|\bigcup_{J \in \J(I)} J|$.  We will now show that $\J$ is a decaying stopping time in the sense of \cite{KP}. Note that while the easy proof is from \cite{I1}, we will include the details since we will need to precisely track the A${}_p$ characteristic contribution.

\begin{lemma} \label{DSTLem}
Let $1 < p < \infty$ and let $W$ be a matrix A${}_p$ weight.  For $\lambda_1, \lambda_2 > 1$ large enough, we have that $|\bigcup \J ^j (I)| \leq 2^{-j} |I|$ for every $I \in \D$.  \end{lemma}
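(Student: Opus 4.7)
The plan is to prove the base case $j=1$ directly and then iterate. Writing $\J(I) = \J_1(I) \cup \J_2(I)$, where $\J_i(I)$ is the collection of maximal $J \in \D(I)$ on which the $i$-th condition in \eqref{STDef} holds (note that any $J \in \J(I)$ is contained in some maximal $J' \in \J_1(I)$ or $J'' \in \J_2(I)$, so the set-theoretic union $\bigcup \J(I)$ is contained in $(\bigcup \J_1(I)) \cup (\bigcup \J_2(I))$), it will suffice to show $|\bigcup \J_i(I)| \leq \frac{1}{4}|I|$ for $i = 1, 2$ once $\lambda_1, \lambda_2$ are chosen large enough.

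The key step for $\J_1(I)$ is a telescoping argument. Fix an orthonormal basis $\{\vec{e}_k\}$ of $\C$; the elementary inequality $\|A\|^p \lesssim_{n,p} \sum_k |A \vec{e}_k|^p$ (which follows from $\|A\| \leq \|A\|_{HS}$ together with the appropriate comparison between $(\sum a_k)^{p/2}$ and $\sum a_k^{p/2}$ depending on whether $p \geq 2$ or $p \leq 2$) reduces the matrix norm to vector norms. Applying $|V_J \vec{u}|^p \approx m_J |W^{1/p}(x)\vec{u}|^p$ to $\vec{u}_k = V_I^{-1}\vec{e}_k$ and using the pairwise disjointness of the maximal cubes,
\begin{equation*}
\sum_{J \in \J_1(I)} |J| \,\|V_J V_I^{-1}\|^p \lesssim_{n,p} \sum_k \int_I |W^{1/p}(x) V_I^{-1} \vec{e}_k|^p\,dx \lesssim_{n,p} |I| \sum_k |V_I V_I^{-1} \vec{e}_k|^p = n|I|.
\end{equation*}
Since every $J \in \J_1(I)$ satisfies $\|V_J V_I^{-1}\|^p > \lambda_1$, this yields $|\bigcup \J_1(I)| \leq C_{n,p}|I|/\lambda_1$, which is $\leq |I|/4$ for $\lambda_1$ large enough (independently of $\|W\|_{A_p}$).

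For $\J_2(I)$, I will first observe that \eqref{ReverseAp} applied to $\vec{e} = V_J^{-1}\vec{v}$ gives $|V_J^{-1}\vec{v}| \leq |V_J'\vec{v}|$, converting a bound on $V_J^{-1}$ into one on $V_J'$. Then the same decomposition via an orthonormal basis and the definition $|V_J' \vec{u}|^{p'} \approx m_J |W^{-1/p}(x)\vec{u}|^{p'}$ with $\vec{u} = V_I \vec{e}_k$ telescopes to $I$, giving
\begin{equation*}
\sum_{J \in \J_2(I)} |J|\,\|V_J^{-1} V_I\|^{p'} \lesssim_{n,p} \sum_k |I|\,|V_I' V_I \vec{e}_k|^{p'} \leq n|I|\, \|V_I V_I'\|^{p'} \lesssim_{n,p} \|W\|_{\text{A}_p}^{p'/p}\,|I|,
\end{equation*}
so $|\bigcup \J_2(I)| \leq C_{n,p}\|W\|_{\text{A}_p}^{p'/p}|I|/\lambda_2 \leq |I|/4$ provided $\lambda_2$ is chosen large enough (and this is the one place where the A${}_p$ characteristic enters, which is precisely what we need to track). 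Combining these two bounds proves $|\bigcup \J(I)| \leq |I|/2$, i.e.\ the case $j = 1$.

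The induction is then immediate: since the cubes in $\J^{j-1}(I)$ are pairwise disjoint and the base case applies to each $K \in \J^{j-1}(I)$ (with the same $\lambda_1, \lambda_2$), we get
\begin{equation*}
\Big|\bigcup \J^j(I)\Big| = \sum_{K \in \J^{j-1}(I)} \Big|\bigcup \J(K)\Big| \leq \tfrac{1}{2} \Big|\bigcup \J^{j-1}(I)\Big| \leq 2^{-j}|I|.
\end{equation*}
The only real obstacle is the $\J_2$ estimate, where the natural object $V_J^{-1}$ must be replaced by $V_J'$ (via \eqref{ReverseAp}) in order to bring in an integral against $W^{-1/p}$ and then apply the matrix A${}_p$ condition once at the end; this is the step that forces $\lambda_2$ (but not $\lambda_1$) to depend on $\|W\|_{\text{A}_p}$, and is exactly the dependence the paper wants to record.
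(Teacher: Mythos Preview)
Your proof is correct and follows essentially the same approach as the paper's: reduce to $j=1$ by iteration, split into the two stopping conditions, and for each use that $\|V_J V_I^{-1}\|^p$ (resp.\ $\|V_J' V_I\|^{p'}$, after passing from $V_J^{-1}$ to $V_J'$ via \eqref{ReverseAp}) is comparable to an average over $J$ of $\|W^{1/p}(y)V_I^{-1}\|^p$ (resp.\ $\|W^{-1/p}(y)V_I\|^{p'}$), then sum over the disjoint maximal cubes and bound the resulting integral over $I$. The only cosmetic difference is that the paper invokes the equivalence of the operator and trace norms to pass directly between matrix norms and integrals, whereas you unpack this via an orthonormal basis; the content is the same, including the tracking of the A${}_p$ characteristic in $\lambda_2$.
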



\begin{proof} By iteration, it is enough to prove the lemma for $j = 1$.  For $I \in \D$, let $\G(I)$ denote the collection of maximal $J \in \D(I)$ such that the first inequality (but not necessarily the second inequality) in $(\ref{STDef})$ holds.  Then by maximality and elementary arguments involving the definition of $V_I$ and the equivalence between the matrix and trace norm for positive matrices, we have that \begin{equation*}  \left| \bigcup_{J \in \G(I)} J \right| = \sum_{J \in \G(I)} |J| \lesssim \frac{1}{\lambda_1} \sum_{J \in \G(I)} \int_{J} \|W^\frac{1}{p} (y) V_I ^{-1}\|^p \, dy \leq  \frac{C_1 |I|}{\lambda_1} \end{equation*} for some $C_1 > 0$ only depending on $n$ and $d$.

On the other hand, let  $I \in \D$, let $\widetilde{\G}(I)$ denote the collection of maximal $J \in \D(I)$ such that the second inequality (but not necessarily the first inequality) in $(\ref{STDef})$ holds. Then by the matrix A${}_p$ condition we have  \begin{equation*}  \left| \bigcup_{J \in \widetilde{\G}(I)} J \right|   \leq  \frac{C_2}{\lambda_2}  \sum_{J \in \widetilde{\G}(I)} \int_J \|W^{-\frac{1}{p}} (y) V_I \|^{p'} \, dy  \leq  \frac{C_2 '\|W\|_{A_p} ^\frac{p'}{p}}{\lambda_2} |I| \end{equation*} for some $C_2 '$ only depending on $n$ and $d$.  The proof is now completed by setting $\lambda_1 = 4C_1$ and $\lambda_2 = 4 C_2'  \|W\|_{A_p} ^\frac{p'}{p}$.   \end{proof}

While we will not have a need to discuss matrix A${}_{p, \infty}$ weights in detail in this paper, note that in fact Lemma $3.1$ in \cite{V} immediately gives us that Lemma \ref{DSTLem} holds for matrix A${}_{p, \infty}$ weights (with a different $\lambda_2$ of course.)

\subsection{Proofs}

We now prove Theorem \ref{CarEmbedThm} \\

\noindent \textit{Proof of Theorem} \ref{CarEmbedThm}. (b) $\Rightarrow$ (a): By dyadic Littlewood-Paley theory, we need to show that  \begin{align} \int_{\Rd} & \left( \sum_{\varepsilon \in \S} \sum_{I \in \D} \frac{ |V_I A_I ^\varepsilon m_I(W^{-\frac{1}{p}} \vec{f} )| ^2 }{|I|} \chi_I(t) \right)^\frac{p}{2} \, dt
\nonumber \\ & \leq \int_{\Rd}  \left( \sum_{\varepsilon \in \S} \sum_{I \in \D} \frac{ (\|V_I A_I ^\varepsilon V_I ^{-1} \| m_I |V_I W^{-\frac{1}{p}} \vec{f}|)^2 }{|I|} \chi_I(t) \right)^\frac{p}{2} \, dt \label{CarEmbedParEst}\\ & \lesssim\|\vec{f}\|_{L^p} ^p \nonumber  \end{align} for any $\vec{f} \in L^p(\Rd;\C)$.

Now let \begin{equation*} \tilde{A} = \sum_{\varepsilon \in \S} \sum_{I \in \D} \|V_I A_I ^\varepsilon V_I ^{-1} \| h_I ^\varepsilon \end{equation*} and let \begin{equation*} M_W ' \vec{f} (x) = \sup_{\D \ni I \ni x}  m_I |V_I W^{-\frac{1}{p}} \vec{f}| \end{equation*} Clearly for any $\D \ni I \ni x$ we have that \begin{equation*} m_I |V_I W^{-\frac{1}{p}} \vec{f}| \leq m_I M_W ' \vec{f} \end{equation*} so that again by dyadic Littlewood-Paley theory \begin{align*}  \eqref{CarEmbedParEst} & \leq \int_{\Rd}  \left( \sum_{\varepsilon \in \S} \sum_{I \in \D} \frac{ (\|V_I A_I ^\varepsilon V_I ^{-1} \| m_I( M_W ' \vec{f}))^2 }{|I|} \chi_I(t) \right)^\frac{p}{2} \, dt
\\ & \lesssim \|\pi_{\tilde{A}} M_W ' \vec{f}\|_{L^p} ^p \\ &  \lesssim \|A\|_* ^p \|M_W ' \vec{f}\|_{L^p} ^p  \end{align*}  where $ \|A\|_*$ is the canonical supremum from condition (b) and $\pi_{\tilde{A}}$ is the scalar dyadic paraproduct with respect to the function $\tilde{A}$.

However, it is easy to see that \begin{equation*} \|M_W ' \|_{L^p \rightarrow ^p} \lesssim \|W\|_{\text{A}_p} ^{\frac{1}{p - 1}} \end{equation*} by using some simple ideas from \cite{G}.  Namely, it is well known (see \cite{G}, p. 6) that $|W^{-\frac{1}{p}} (x) \V{e}|^{p'}$ is a scalar A${}_{p'}$ weight for any matrix A${}_p$ weight $W$ and $\V{e} \in \C$ with A${}_{p'}$ characteristic  less than or equal to $\|W\|_{\text{A}_p} ^{-\frac{p'}{p}} $.  Thus, by the scalar reverse H\"{o}lder inequality, the matrix A${}_p$ condition, and the equivalence of the operator and trace norms, we have for $\epsilon  \approx \|W\|_{\text{A}_p} ^{-\frac{p'}{p}} $ small enough that \begin{align*} \left(\frac{1}{|I|}  \int_I  \|V_I W^{-\frac{1}{p}} (y) \|^{p' + p' \epsilon}  \, dy \right)^\frac{1}{p' + p' \epsilon} & = \left(\frac{1}{|I|}  \int_I  \| W^{-\frac{1}{p}} (y) V_I  \|^{p' + p' \epsilon}  \, dy \right)^\frac{1}{p' + p' \epsilon} \\ & \lesssim \left(\frac{1}{|I|}  \int_I  \| W^{-\frac{1}{p}} (y) V_I  \|^{p' }  \, dy \right)^\frac{1}{p' } \\ & \lesssim \|W\|_{\text{A}_p} ^{\frac{1}{p}}. \end{align*}

Therefore, we have \begin{align*} m_I |V_I W^{-\frac{1}{p}} \vec{f} |  &  =  \frac{1}{|I|}  \int_I  |V_I     W^{-\frac{1}{p} } (y) \vec{f}(y)| \, dy
\\ & \leq    \left(\frac{1}{|I|}  \int_I  \|V_I W^{-\frac{1}{p}} (y) \|^{p' + p' \epsilon}  \, dy \right)^\frac{1}{p' + p' \epsilon} \left(\frac{1}{|I|}  \int_I  |\vec{f}(y)|^\frac{p + p\epsilon}{1 + p\epsilon} \, dy \right)^\frac{1 + p\epsilon}{p + p\epsilon}
 \\ & \lesssim \|W\|_{\text{A}_p} ^\frac{1}{p}  \left(\frac{1}{|I|}  \int_I  |\vec{f}(y)|^\frac{p + p\epsilon}{1 + p\epsilon} \, dy \right)^\frac{1 + p\epsilon}{p + p\epsilon} \nonumber  \end{align*}

 Which means that \begin{equation*} M_W ' \vec{f} (x) \leq \|W\|_{\text{A}_p} ^\frac{1}{p} (M |\vec{f}|^\frac{p + p\epsilon}{1 + p\epsilon} (x))^\frac{1 + p\epsilon}{p + p\epsilon} \end{equation*} (where $M$ is the ordinary unweighted maximal function) so that by the standard $L^{1 + \delta}$ bounds for the ordinary maximal function when $\delta > 0$ is small, we get

 \begin{equation*} \| M_W ' \|_{L^p \rightarrow L^p} \lesssim \epsilon^\frac{1}{p} \|W\|_{\text{A}_p} ^\frac{1}{p}  \lesssim \|W\|_{\text{A}_p} ^{\frac{1}{p} + \frac{p'}{p^2}} = \|W\|_{\text{A}_p} ^\frac{1}{p - 1}  \end{equation*} which completes the proof that (b) $\Rightarrow$ (a).

(a) $\Rightarrow$ (b): Fixing $J \in \D$, plugging in the test functions $\vec{f} := \chi_J \vec{e}_i$ into $\Pi_A$ for any orthonormal basis $\{\vec{e}_i\}_{i = 1}^n $ of $\C$, and using $(a)$ combined with dyadic Littlewood-Paley theory and the equivalence of the matrix norm and the Hilbert-Schmidt norm gives us that \begin{align*} \|\Pi_A\|_{L^p \rightarrow L^p} ^p  |J|  & \gtrsim    \int_{\Rd} \left( \sum_{I \in \D} \sum_{\varepsilon \in \S} \frac{\|V_I A_I ^\varepsilon m_I (\chi_J W^{- \frac{1}{p}} )\| ^2}{|I|} \chi_I (x) \right)^\frac{p}{2} \, dx \nonumber \\ & \geq  \int_J \left( \sum_{\varepsilon \in \S} \sum_{I \in \D(J)} \frac{\|V_I A_I ^\varepsilon m_I ( W^{- \frac{1}{p}} )\| ^2}{|I|} \chi_I (x) \right)^\frac{p}{2} \, dx \end{align*} which in conjunction with Lemma \ref{RedOp-AveLem} says that \begin{align*}  \sup_{J \in \D} \frac{1}{|J|} \ & \int_J  \left( \sum_{\varepsilon \in \S} \sum_{I \in \D(J)} \frac{\|V_I A_I ^\varepsilon V_I ^{-1}  \| ^2}{|I|} \chi_I (x) \right)^\frac{p}{2} \, dx  \\ & \lesssim \sup_{J \in \D} \frac{1}{|J|} \ \int_J  \left( \sum_{\varepsilon \in \S} \sum_{I \in \D(J)} \frac{\|V_I A_I ^\varepsilon V_I ' \| ^2}{|I|} \chi_I (x) \right)^\frac{p}{2} \, dx  \\ & \lesssim  \|W\|_{\text{A}_p} ^n \sup_{J \in \D} \frac{1}{|J|} \ \int_J \left( \sum_{\varepsilon \in \S}  \sum_{I \in \D(J)} \frac{\|V_I A_I ^\varepsilon m_I ( W^{- \frac{1}{p}} )\| ^2}{|I|} \chi_I (x) \right)^\frac{p}{2} \, dx. \end{align*} Condition $(b)$ now follows immediately from $(ii) \Longleftrightarrow (i)$ of Theorem $3.1$ in \cite{NTV}, which (after a trivial relabeling) says that for any nonnegative sequence $\{a_I\}_{I \in \D}$ of real numbers we have that \begin{equation*} \sup_{J \in \D} \left(\frac{1}{|J|} \sum_{I \in \D(J)} a_I^2 \right)^\frac12 \approx \sup_{J \in \D} \left( \frac{1}{|J|} \int_J \left( \sum_{I \in \D(J)} \frac{ a_I ^2}{|I|} \chi_I(x) \right)^\frac{p}{2} \, dx \right)^\frac{1}{p}. \end{equation*}

\noindent We now prove that (c) $\Rightarrow$ (b) and  (a) $\Rightarrow$ (c) for the case $2 \leq p < \infty$.

(c) $\Rightarrow$ (b) when $2 \leq p < \infty$:  Note that condition (c) is equivalent to \begin{equation*} \frac{1}{|K|}  \sum_{\varepsilon \in \S} \sum_{I \in \D(K)} \|V_K ^{-1} (A_I ^\varepsilon) ^* V_I ^2 A_I ^\varepsilon V_K ^{-1}  \|    \lesssim 1    \label{CondC'} \end{equation*} for any $K \in \D$.  Fix $J \in \D$ and for each $j \in \mathbb{N}$ let $\J^j(J)$ and $\F^j (J)$ be defined as they previously where $\lambda_1 \approx 1$, and $ \lambda_2 \approx \|W\|_{A_p} ^\frac{p'}{p}$ are picked so that Lemma \ref{DSTLem} is true.  Then  (\ref{STDef}) tells us that \begin{align*} \frac{1}{|J|} & \sum_{\varepsilon \in \S} \sum_{I \in \D(J)}   \|V_I A_I ^\varepsilon V_I ^{-1} \|^2
\\ & \leq  \frac{1}{|J|} \sum_{j = 1}^\infty  \sum_{\varepsilon \in \S} \sum_{K \in \J^{j - 1}(J) } \sum_{I \in \F(K)} \|V_I ^{-1} V_K\| \|V_K^{-1} (A_I ^\varepsilon) ^*  V_I ^2 A_I ^\varepsilon V_K ^{-1} \| \|V_K V_I ^{-1}\|  \\ & \lesssim  \|W\|_{\text{A}_p} ^{\frac{2}{p}} \frac{1}{|J|} \sum_{j = 1}^\infty \sum_{\varepsilon \in \S} \sum_{K \in \J^{j - 1}(J) } \sum_{I \in \D(K)}  \|V_K^{-1} (A_I ^\varepsilon) ^*  V_I ^2 A_I ^\varepsilon  V_K ^{-1} \| \\ & \lesssim  \|W\|_{\text{A}_p} ^{\frac{2}{p}} \frac{1}{|J|} \sum_{j = 1}^\infty \sum_{K \in \J^{j - 1}(J) } |K| \\ & \lesssim  \|W\|_{\text{A}_p} ^{\frac{2}{p}}  \sum_{j = 1}^\infty 2^{-j}  \lesssim \|W\|_{\text{A}_p} ^{\frac{2}{p}}. \end{align*}


 (a) $\Rightarrow$ (c) when $2 \leq p < \infty$: Fix $J \in \D$ and $\vec{e} \in \C$.  If $\vec{f} = W^\frac{1}{p} \chi_J \vec{e}$, then condition (a), the definition of $V_J$, and H\"{o}lder's inequality give us that \begin{align*} |J| |V_J \vec{e}|^p \|\Pi_A \|_{L^p \rightarrow L^p} ^p & \gtrsim \int_{\Rd} \left( \sum_{I \in \D} \sum_{\varepsilon \in \S} \frac{|V_I A_I ^\varepsilon m_I (\chi_J \vec{e}) |^2}{|I| } \chi_I (t) \right)^\frac{p}{2} \, dt \\ & \geq |J| \left[\frac{1}{|J|} \int_J \left( \sum_{\varepsilon \in \S} \sum_{I \in \D(J)} \frac{|V_I A_I ^\varepsilon  \vec{e}|^2 }{|I| } \chi_I (t) \right)^\frac{p}{2} \, dt \right] \\ & \geq |J| \left[ \frac{1}{|J|} \sum_{\varepsilon \in \S} \sum_{I \in \D(J)} |V_I A_I ^\varepsilon \vec{e}|^2 \right]^\frac{p}{2} \end{align*} which proves (c), and in fact shows that (a) $\Leftrightarrow$ (b)   $\Leftrightarrow$ (c) when $ 2 \leq p < \infty$.  We will now complete the proof when $1 < p \leq 2$.

 (b) $\Rightarrow$ (c) when $1 < p \leq  2$:  To avoid confusion in the subsequent arguments, we will write $V_I = V_I(W, p)$ to indicate which $W$ and $p$ the $V_I$ at hand is referring to.  As mentioned before, it is easy to see that $W$ is a matrix A${}_p$ weight if and only if $W^{1 - p'}$ is a matrix A${}_{p'}$ weight and that we may take $V_I(W^{1 - p'}, p') = V_I ' (W, p)$ and $V_I ' (W^{1 - p'}, p') =  V_I(W, p)$.  Now if (b) is true, then the two equalities above give us that \begin{align*} & \sup_{J \in \D} \frac{1}{|J|} \sum_{\varepsilon \in \S} \sum_{I \in \D(J)} \|V_I(W^{1 - p'}, p')( A_I ^\varepsilon) ^* V_I ' (W^{1 - p'}, p')\|^2 \nonumber \\ &=  \sup_{J \in \D} \frac{1}{|J|} \sum_{\varepsilon \in \S} \sum_{I \in \D(J)} \|V_I(W, p) A_I ^\varepsilon  V_I ' (W, p)\|^2   < \infty.  \end{align*} However, repeating word for word the proofs of (b) $\Rightarrow$ (a) $\Rightarrow$ (c) for the case $2 \leq p < \infty$ (where $W^{1 - p'}$ replaces $W$ and $A^* := \{(A_I ^\varepsilon) ^* : I  \in \D, \ \varepsilon \in \in \S\}$ replaces the sequence $A$) gives us that there exists $C > 0$ where \begin{equation*} \frac{1}{|J|} \sum_{\varepsilon \in \S}  \sum_{I \in \D(J)} A_I  ^\varepsilon (V_I(W^{1 - p'}, p'))^2 (A_I ^\varepsilon) ^*  < C (V_J(W^{1 - p'}, p'))^2,  \end{equation*} which proves (c) when $1 < p \leq  2$.  		\\

(c) $\Rightarrow$ (b) when $1 < p \leq  2$: This follows immediately by again replacing $W$ with  $W^{1 - p'}$, replacing $A$ with $A^* := \{(A_I ^\varepsilon) ^*\}_{I \in \D, \varepsilon \in \S}$, and using the proof of (c) $\Rightarrow$ (b) when $2 \leq p < \infty$. Since (a) $\Leftrightarrow$ (b) was shown for all $1 < p < \infty$, we therefore have that (a) $\Leftrightarrow$ (b) $\Leftrightarrow$  (c) for all $1 < p < \infty.$  The proof is now completed. \hfill $\square$

We can now prove \ref{ParaThm}  \\

\noindent \textit{Proof of Theorem} \ref{ParaThm}.  Note that  \eqref{LpEmbedding}, standard dyadic Littlewood-Paley theory, and the definition of $\pi_B$ gives us that  \begin{equation*} \|\pi_B  W^{-\frac{1}{p}} \vec{f} \|_{L^p(W)} ^p  \approx  \inrd \left(\sum_{\varepsilon \in \S} \sum_{I \in \D} \frac{| V_I B_I ^\varepsilon m_I (W^{-\frac{1}{p}} \vec{f})  |^2}{|I|} \chi_I(x) \right)^\frac{p}{2} \, dx \approx \| \Pi_B \vec{f} \|_{L^p} ^p \end{equation*}  where $\Pi_B $ is the operator in $(a)$ of \ref{CarEmbedThm} with respect to the Haar coefficient sequence $\{B_I ^\varepsilon \}_{I \in \D, \ \varepsilon \in \S}$. Thus, the $L^p(W)$ boundedness of $\pi_B$ is equivalent to the $L^p$ boundedness of $\Pi_B$.  Thanks to Theorem \ref{CarEmbedThm}, the proof will be completed by showing that $B \in \BMOW$ if and only if $\Pi_B$ is bounded on $L^p$.

 To that end, if again $\{\V{e}_i\}_{i = 1}^n$ is any orthonormal basis of $\C$, then  $\Pi_B$ being bounded on $L^p$ in conjunction with \eqref{LpEmbedding} gives us that \begin{align*}  \sup_{J \in \D} \ \frac{1}{|J|} &  \int_J \|W^\frac{1}{p} (x) (B(x) - m_J B) V_J ^{-1}  \|^p \, dx \\ & \approx   \sum_{i = 1}^n \sup_{J \in \D}  \ \frac{1}{|J|} \int_{\Rd} |W^\frac{1}{p} (x) \chi_J (x) (B(x) - m_J B) V_J ^{-1} \V{e}_i |^p \, dx \\ & \approx \sum_{i = 1}^n \sup_{J \in \D} \frac{1}{|J|}   \ \int_{\Rd} \left(\sum_{\varepsilon \in \S} \sum_{I \in \D(J)} \frac{|V_I B_I ^\varepsilon V_J ^{-1} \V{e}_i |^2}{|I|} \chi_I (x) \right)^\frac{p}{2} \, dx \\ & \leq \sum_{i = 1}^n \sup_{J \in \D}   \frac{1}{|J|} \int_{\Rd} \left(\sum_{I \in \D(J)} \sum_{\epsilon \in \S} \frac{|V_I B_I ^\varepsilon m_I (W^{-\frac{1}{p}} \{ \chi_J  W^\frac{1}{p}V_J ^{-1} \V{e}_i\}) |^2}{|I|} \chi_I (x) \right)^\frac{p}{2} \, dx  \\ & \lesssim \sum_{i = 1}^n \sup_{J \in \D} |J|^{-1}  \|\Pi_B \chi_J W^\frac{1}{p}  V_J ^{-1} \V{e}_i\|_{L^p} ^p \\ & \lesssim \|\Pi_B\|_{L^p} ^p
\end{align*} \noindent by the definition of $V_J,$ which means that the first condition of the definition of $\BMOW$ is true for all $1 < p < \infty$.   Similarly, the validity of the second condition of the definition of $\BMOW$ for all $1 < p < \infty$ if $\Pi_B$ is bounded follows by the above arguments in conjunction with the arguments used to prove $(b) \Rightarrow (c)$ when $1 < p \leq 2$ (that is, taking adjoints in condition $(b)$ and using the fact that $W$ is a matrix A${}_p$ weight if and only if $W^{1 - p'}$ is a matrix A${}_{p'}$ weight.

Now if $2 \leq p < \infty$ and $B \in \BMOW$ then as before \eqref{LpEmbedding} gives us that for any $\vec{e} \in \C$
\begin{align*}  \sup_{J \in \D} \ \frac{1}{|J|} &  \int_J |W^\frac{1}{p} (x) (B(x) - m_J B) V_J ^{-1} \vec{e}   |^p \, dx \\ & \approx    \sup_{J \in \D}  \ \frac{1}{|J|} \int_J \left(\sum_{\varepsilon \in \S} \sum_{I \in \D(J)} \frac{|V_I B_I ^\varepsilon V_J ^{-1} \vec{e}|^2}{|I|} \chi_I (x) \, dx \right) ^\frac{p}{2} \, dx   \\ & \geq  \sup_{J \in \D}  \ \left( \frac{1}{|J|}  \sum_{\varepsilon \in \S} \sum_{I \in \D(J)} |V_I B_I ^\varepsilon V_J ^{-1} \vec{e}|^2 \right)^\frac{p}{2} \end{align*} since $\frac{p}{2} \geq 1$, which says that condition $(c)$ (when $2 \leq p < \infty$) is true if $B \in \BMOW$.  The same argument using \eqref{LpEmbedding}  for the space $L^{p'}(W^{1 - p'})$  also shows that  condition $(c)$ (when $1 < p \leq 2$) is true if $B \in \BMOW$  since in this case $p' \geq 2$.    The proof is now completed.
\hfill $\Box$ \\

We now prove \eqref{ParaQuant} \\

\textit{Proof of} \eqref{ParaQuant}. By  \eqref{LowerBound} and the proof of (b) $\Rightarrow$ (a) we have \begin{align*} \|\pi_B  W^{-\frac{1}{2}} \vec{f} \|_{L^2 (W)} \lesssim \Atwo{W} ^\frac12  (\log \Atwo{W})^\frac12  \| \Pi_B \vec{f} \|_{L^2}  \lesssim \Atwo{W} ^\frac32  (\log \Atwo{W})^\frac12 \|B\|_{*} ^\frac12. \end{align*} \hfill $\square$.

We will end this section with an important comment. Note that the proof of (b)  $\Rightarrow$ (c) when $1 < p \leq 2$ immediately says that $B \in \BMOW$ if and only if $B^* \in \BMOWq$.  Thus, we have $B \in \BMOW$ if and only if $\pi_{B^*}$ is bounded on $L^p(W^{1 - p'})$ if and only if $(\pi_{B^*})^*$ is bounded on $L^p(W)$. We will record this as a corollary since we will need this fact when we prove sufficiency in the proof of Theorem \ref{CommutatorThm}.

\begin{corollary} \label{CorForSufficiency} If $W$ is a matrix A${}_p$ weight then $B \in \BMOW$ if and only if $(\pi_{B^*})^*$ is bounded on $L^p(W)$. \end{corollary}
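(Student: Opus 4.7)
The plan is to prove the corollary by chaining three equivalences. First I would establish that $B \in \BMOW$ if and only if $B^{*} \in \BMOWq$. As the remark immediately preceding the corollary points out, this is a direct unpacking of the definition of $\BMOW$ combined with the duality identities $V_{I}(W^{1-p'},p') = V_{I}'(W,p)$ and $V_{I}'(W^{1-p'},p') = V_{I}(W,p)$ together with the elementary identity $(W^{1-p'})^{1/p'} = W^{-1/p}$. Under the substitution $W \mapsto W^{1-p'}$, $p \mapsto p'$, $B \mapsto B^{*}$, the two cases in the definition of $\BMOW$ (corresponding to $p \geq 2$ versus $1 < p \leq 2$) swap, and after applying the identities above, the resulting condition for $B^{*} \in \BMOWq$ agrees line by line with that for $B \in \BMOW$.

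Second, I would apply Theorem \ref{ParaThm} to the matrix $\mathrm{A}_{p'}$ weight $W^{1-p'}$ (which is $\mathrm{A}_{p'}$ precisely because $W$ is $\mathrm{A}_{p}$), the exponent $p'$, and the matrix symbol $B^{*}$, to conclude that $B^{*} \in \BMOWq$ if and only if $\pi_{B^{*}}$ is bounded on $L^{p'}(W^{1-p'})$. Third, I would invoke the standard duality $(L^{p}(W))^{*} \cong L^{p'}(W^{1-p'})$ under the pairing $\langle \vec{f}, \vec{g}\rangle = \int \vec{f}^{*}\vec{g}\,dx$, which is a one-line H\"{o}lder computation using $(W^{-1/p})^{p'} = W^{1-p'}$. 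Under this identification the Banach-space adjoint of $\pi_{B^{*}}$ acting on $L^{p'}(W^{1-p'})$ is precisely $(\pi_{B^{*}})^{*}$ acting on $L^{p}(W)$, with equal operator norms, so one operator is bounded iff the other is. Chaining the three equivalences yields the corollary.

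There is no serious obstacle here. The first step is a bookkeeping exercise: one must carefully align $p \geq 2$ with $p' \leq 2$ and track how the conjugation $B \mapsto B^{*}$ interacts with the two cases of the $\BMOW$ norm, but as the paper already observes, this matching is implicit in the proof of the implication (b) $\Rightarrow$ (c) for $1 < p \leq 2$ in Theorem \ref{CarEmbedThm}. The second step is a direct invocation of the already-established Theorem \ref{ParaThm}, and the third step is standard matrix-weighted $L^{p}$ duality.
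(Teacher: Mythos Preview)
Your proposal is correct and follows essentially the same route as the paper's argument in the paragraph immediately preceding the corollary: the chain $B \in \BMOW \Leftrightarrow B^{*} \in \BMOWq \Leftrightarrow \pi_{B^{*}}$ bounded on $L^{p'}(W^{1-p'}) \Leftrightarrow (\pi_{B^{*}})^{*}$ bounded on $L^{p}(W)$ is exactly what the paper sketches, and your justification of each link (definition unwinding via the reducing-operator identities, Theorem~\ref{ParaThm}, and $L^p(W)$--$L^{p'}(W^{1-p'})$ duality) matches. If anything, your version is more careful than the paper's: you correctly write $L^{p'}(W^{1-p'})$ where the paper's text has the apparent typo $L^{p}(W^{1-p'})$.
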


\section{Proof of Theorem \ref{CommutatorThm}}

\subsection{Preliminaries} \label{Section31}

Before we prove Theorem \ref{CommutatorThm} we will need some preliminary results, including  Proposition \ref{HaarMultThm}. \\

\noindent \textit{Proof of Proposition} \ref{HaarMultThm}. If $M = \sup_{I \in \D, \varepsilon \in \S} \|V_I A_I ^\varepsilon V_I '\| < \infty$, then two applications of $(\ref{LpEmbedding})$ and \eqref{SmallBig} give us that \begin{align*} \|T_A \vec{f} \|_{L^p(W)} ^p & \approx \inrd \left(\sum_{I \in \D} \sum_{\varepsilon \in \S} \frac{|V_I A_I ^\varepsilon {\vec{f}}_I ^\varepsilon |^2}{|I|} \chi_I (x) \right)^\frac{p}{2} \, dx \\ & \leq \inrd \left(\sum_{I \in \D} \sum_{\varepsilon \in \S} \frac{\|V_I A_I ^\varepsilon V_I ^{-1} \|^2 | V_I {\vec{f}}_I ^\varepsilon|^2}{|I|} \chi_I (x) \right)^\frac{p}{2} \, dx \\ & \leq M^p \inrd \left(\sum_{I \in \D} \sum_{\varepsilon \in \S} \frac{ | V_I {\vec{f}}_I ^\varepsilon |^2}{|I|} \chi_I (x) \right)^\frac{p}{2} \, dx \\ &  \approx M^p \|\vec{f}\|_{L^p(W)} ^p. \end{align*}

For the other direction, let $\ell(J)$ denote the side-length of the cube $J$.  Fix some $J_0 \in \D$  and $\varepsilon' \in \S$, and let $J_0 ' \in \D(J_0)$ with $\ell(J_0 ') = \frac{1}{2} \ell(J_0)$. Again by  \eqref{LpEmbedding} we have that \begin{equation} \label{NecHaarMultEq}  \int_{\Rd} \left(\sum_{I \in \D} \sum_{\varepsilon \in \S} \frac{|V_I A_I ^\varepsilon (W^{-\frac{1}{p}} \vec{f} )_I ^\varepsilon |^2 }{|I|} \chi_I (x) \right) ^\frac{p}{2} \, dx \lesssim \|\vec{f}\|_{L^p}^p.   \end{equation}  Plugging $\vec{f} := \chi_{J_0 '} \vec{e}$ for any $\vec{e} \in \C$ into (\ref{NecHaarMultEq}) and noticing that \begin{equation*}  (W^{-\frac{1}{p}} \chi_{J_0 '} \vec{e} )_{J_0} ^{\varepsilon'} = \pm 2^{-\frac{d}{2}} |J_0| ^\frac{1}{2} m_{J_0 '} (W^{- \frac{1}{p}}\V{e}) \end{equation*} gives us (in conjunction with Lemma \ref{RedOp-AveLem}) that
\begin{equation*} \|V_{J_0} A_{J_0} ^{\varepsilon'} V_{J_0 '} '  \|  \lesssim \|W\|_{\text{A}_p} ^\frac{n}{p} \|V_{J_0} A_{J_0} ^{\varepsilon'} m_{J_0 '} (W^{- \frac{1}{p}}) \| \lesssim  \|W\|_{\text{A}_p} ^\frac{n}{p} \|T_A \|_{L^p \rightarrow L^p}.\end{equation*}   Using the definition of $V_{J_0 '} ' $ and summing over all of the $2^d$ first generation children $J_0 '$ of $J_0$ in conjunction with \eqref{SmallBig} finally (after taking the supremum over $J_0 \in \D$) gives us that

\begin{equation*} \sup_{J \in \D, \ \varepsilon \in \S} \|V_{J} A_{J}^\varepsilon V_{J} ^{-1}   \| \lesssim  \sup_{J \in \D, \ \varepsilon \in \S} \|V_{J} A_{J} ^\varepsilon  V_{J} '\|    < \|W\|_{\text{A}_p} ^\frac{n}{p} \|T_A \|_{L^p \rightarrow L^p}\end{equation*}  as desired. \hfill $\square$

To prove both necessity in Theorem \ref{CommutatorThm} and Proposition \ref{BMOvsDyadicBMO} we will need the following lemma.  While the simple proof is very similar to the proof of Lemma $6.2$ in \cite{IM}, we will nevertheless provide the details.

\begin{lemma} \label{InfLem}  Let $B$ be a locally integrable $\Mn$ valued function on $\Rd$,  $W$ a matrix A${}_p$ weight, and $Q$ a cube.  Then  \begin{align*} \left(\frac{1}{|Q|} \int_Q  \right. & \left.|W^\frac{1}{p}(x) ( B(x) - m_Q B )V_Q ^{-1} | ^p \, dx \right)^\frac{1}{p} \\ & \leq (1 + \|W\|_{\text{A}_p} ^\frac{1}{p})   \inf_{A \in \Mn} \left(\frac{1}{|Q|} \int_Q |W^\frac{1}{p}(x)  (B(x) - A)V_Q ^{-1} | ^p \, dx \right)^\frac{1}{p}. \end{align*} \end{lemma}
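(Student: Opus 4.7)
The plan follows the standard ``best-constant'' reduction for $L^p$-type BMO norms, adapted to the matrix-weighted setting. For an arbitrary $A \in \Mn$, I would use that $m_Q(B - A) = m_Q B - A$ to write $B(x) - m_Q B = (B(x) - A) - m_Q(B - A)$, and then apply the triangle inequality in $L^p(Q, dx/|Q|)$ for the matrix norm $|\cdot|$ to obtain
\begin{equation*}
\left(\frac{1}{|Q|}\int_Q |W^{1/p}(x)(B(x) - m_Q B)V_Q^{-1}|^p \, dx\right)^{1/p} \leq I(A) + II(A),
\end{equation*}
where $I(A)$ is precisely the quantity inside the infimum on the right-hand side and $II(A)$ is the same expression but with the constant matrix $C := m_Q(B-A)$ replacing $B(x) - m_Q B$. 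It thus suffices to prove $II(A) \leq \|W\|_{\text{A}_p}^{1/p}\, I(A)$ for every $A \in \Mn$, after which taking the infimum over $A$ yields the lemma.

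To reduce $II(A)$ to a matrix estimate, factor $W^{1/p}(x) C V_Q^{-1} = (W^{1/p}(x) V_Q^{-1})(V_Q C V_Q^{-1})$ and apply sub-multiplicativity of the matrix norm. The defining inequality $\frac{1}{|Q|}\int_Q |W^{1/p}(x)\vec{e}|^p \, dx \leq |V_Q \vec{e}|^p$ applied column-by-column in an orthonormal basis (together with the equivalence of matrix norms on $\Mn$) shows that $(\frac{1}{|Q|}\int_Q |W^{1/p}(x) V_Q^{-1}|^p\, dx)^{1/p}$ is comparable to $|V_Q V_Q^{-1}| = 1$. Consequently $II(A) \lesssim |V_Q C V_Q^{-1}|$.

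For the remaining matrix quantity, write $C = \frac{1}{|Q|}\int_Q (B(y) - A)\, dy$ and pull the average out, yielding $|V_Q C V_Q^{-1}| \leq \frac{1}{|Q|}\int_Q |V_Q(B(y) - A)V_Q^{-1}|\, dy$. Inserting the identity $V_Q = (V_Q W^{-1/p}(y))W^{1/p}(y)$ and applying sub-multiplicativity, followed by H\"older's inequality with exponents $p'$ and $p$, gives
\begin{equation*}
|V_Q C V_Q^{-1}| \leq \left(\frac{1}{|Q|}\int_Q |V_Q W^{-1/p}(y)|^{p'}\, dy\right)^{1/p'} \cdot I(A).
\end{equation*}
The first factor, again processed columnwise via the defining property of $V_Q'$ (namely $|V_Q' \vec{e}|^{p'} \approx \frac{1}{|Q|}\int_Q |W^{-1/p}(y) \vec{e}|^{p'}\, dy$), is comparable to $|V_Q V_Q'|$, which is bounded by $\|W\|_{\text{A}_p}^{1/p}$ by the matrix $\text{A}_p$ condition \eqref{MatrixApDef}. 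Combining everything gives $II(A) \leq \|W\|_{\text{A}_p}^{1/p}\, I(A)$, completing the argument.

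The main obstacle is not conceptual but organizational: one must keep the matrix-norm bookkeeping consistent, since the passage between $|V_Q M|$ and columnwise $L^p$ integrals of $|W^{1/p}\vec{e}|$ is only natural in norms (like Hilbert--Schmidt) that split as sums over an orthonormal basis. Working with Hilbert--Schmidt throughout and absorbing dimensional constants (by using the matrix form of the defining inequality for $V_Q$ rather than its vector form) is the cleanest way to arrive at the stated constant $(1 + \|W\|_{\text{A}_p}^{1/p})$, consistent with the analogous Lemma~$6.2$ of \cite{IM} cited for inspiration.
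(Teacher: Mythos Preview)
Your overall plan is correct and yields the lemma up to a harmless dimensional constant, but it is more circuitous than the paper's argument and your closing claim about recovering the exact constant $(1+\|W\|_{\text{A}_p}^{1/p})$ is not justified by the route you chose.

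The paper, after the same triangle-inequality split $I(A)+II(A)$, handles $II(A)$ \emph{without} passing through the reducing operator $V_Q$ at all. It writes
\[
W^{1/p}(x)(A-m_QB)V_Q^{-1}=\frac{1}{|Q|}\int_Q \bigl(W^{1/p}(x)W^{-1/p}(y)\bigr)\,W^{1/p}(y)(B(y)-A)V_Q^{-1}\,dy,
\]
applies sub-multiplicativity and H\"older in $y$ with exponents $(p',p)$, and then integrates in $x$. The resulting factor is precisely
\[
\frac{1}{|Q|}\int_Q\Bigl(\frac{1}{|Q|}\int_Q\|W^{1/p}(x)W^{-1/p}(y)\|^{p'}\,dy\Bigr)^{p/p'}dx\le \|W\|_{\text{A}_p},
\]
which is the matrix $\text{A}_p$ condition \emph{verbatim}. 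This gives $II(A)\le \|W\|_{\text{A}_p}^{1/p} I(A)$ with no dimensional loss.

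Your detour factors $W^{1/p}(x)CV_Q^{-1}=(W^{1/p}(x)V_Q^{-1})(V_QCV_Q^{-1})$ and then controls each piece via the vector defining inequalities for $V_Q$ and $V_Q'$, processed columnwise. This is sound, but each such passage (operator norm $\leftrightarrow$ Hilbert--Schmidt $\leftrightarrow$ sum over columns, and the reducing-operator inequality $\rho(\vec e)\le |V\vec e|\le \sqrt{n}\,\rho(\vec e)$) costs a power of $n$. So your argument gives $II(A)\le C(n)\|W\|_{\text{A}_p}^{1/p} I(A)$, which is perfectly adequate for every application in the paper, but does not deliver the stated constant. The moral is that keeping $W^{1/p}(x)$ and $W^{-1/p}(y)$ adjacent lets one invoke the $\text{A}_p$ quantity directly, whereas separating them through $V_Q$ forces norm-equivalence bookkeeping.
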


 \begin{proof}
 By the triangle inequality, \begin{align} \left(\frac{1}{|Q|} \int_Q |W^\frac{1}{p}(x)  (B(x) - m_Q B ) V_Q ^{-1} | ^p \, dx \right)^\frac{1}{p} & \leq \left(\frac{1}{|Q|} \int_Q |W^\frac{1}{p}(x) ( B(x) -  A)V_Q ^{-1} | ^p \, dx \right)^\frac{1}{p} \nonumber \\ &  + \left(\frac{1}{|Q|} \int_Q |W^\frac{1}{p}(x) ( A - m_Q B)V_Q ^{-1} | ^p \, dx \right)^\frac{1}{p}. \label{stuff} \end{align}  However,  \begin{align*} |W^\frac{1}{p}(x) & ( A - m_Q B) V_Q ^{-1} |^p  \\ & = \left| \frac{1}{|Q|} \int_Q W^\frac{1}{p}(x) (B(y) - A ) V_Q ^{-1}  \, dy \right|^p  \\ & = \left| \frac{1}{|Q|} \int_Q (W^\frac{1}{p}(x) W^{- \frac{1}{p}}(y)) W^\frac{1}{p}(y) (B(y) - A) V_Q ^{-1}  \, dy \right|^p \\ & \leq \left( \frac{1}{|Q|} \int_Q \|W^\frac{1}{p}(x) W^{- \frac{1}{p}}(y)\|^{p'} \, dy \right)^\frac{p}{p'} \left(\frac{1}{|Q|} \int_Q|   W^\frac{1}{p}(y) (B(y) - A )V_Q ^{-1} | ^p  \, dy \right).\end{align*}

 Plugging this into \eqref{stuff} and using \eqref{MatrixApDef} completes the proof.

\end{proof}

The proof strategy for sufficiency in Theorem \ref{CommutatorThm} will follow the simple arguments in \cite{LPPW}, though of course more care must be taken in our situation due to noncommutativity.   As in \cite{LPPW} the starting point is the fact that any of the Riesz transforms are in the $L^2$ SOT convex hull of the so called ``first order Haar shifts" (or for short, ``Haar shifts") which are defined by \begin{equation*} Q_\sigma h_I ^\varepsilon = h_{\sigma(I)} ^{\sigma(\varepsilon)} \end{equation*}  and where (slightly abusing notation in the obvious way) $\sigma : \D \times \S \rightarrow \D \times \S$ satisfies $2\ell(\sigma(I)) =  \ell(I)$ and $\sigma(I) \subseteq I$ for each $I \in \D$ (see \cite{H} for the definition of general Haar shifts, which are used to study general CZOs).  Fixing $\sigma$ and letting $Q = Q_{\sigma}$,  it is then enough to get an $L^p(W)$ bound on each $[B, Q]$. Before we do this, however, we will need to prove that first of all $Q$ is bounded on $L^p(W)$ if $W$ is a matrix A${}_p$ weight.  Note that this was in fact done for $p = 2$ in \cite{CW}.

\begin{proposition} \label{HaarBdd} Each of the Haar shifts $Q$ are bounded on $L^p(W)$ if $W$ is a matrix A${}_p$ weight. \end{proposition}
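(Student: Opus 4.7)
The plan is to reduce the boundedness of $Q$ on $L^p(W)$ to the matrix-weighted Littlewood--Paley characterization \eqref{LpEmbedding}, and then exploit the fact that $\sigma(I)$ and $I$ are comparable cubes. Since $\{h_I^\varepsilon\}_{I \in \D, \, \varepsilon \in \S}$ is orthonormal and $\sigma$ sends distinct pairs to distinct pairs (its first coordinate is a proper dyadic child of $I$), every pair $(J, \eta)$ in the range of $\sigma$ has a unique preimage $(I, \varepsilon)$, and for such pairs $(Q\vec{f})_J^\eta = \vec{f}_I^\varepsilon$ while all remaining Haar coefficients of $Q\vec{f}$ vanish.

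Applying \eqref{LpEmbedding} to $Q\vec{f}$ and reindexing via $J = \sigma(I)$, $\eta = \sigma(\varepsilon)$ yields
\begin{equation*}
\|Q\vec{f}\|_{L^p(W)}^p \approx \int_{\Rd} \left(\sum_{I \in \D} \sum_{\varepsilon \in \S} \frac{|V_{\sigma(I)} \vec{f}_I^\varepsilon|^2}{|\sigma(I)|} \chi_{\sigma(I)}(x)\right)^{p/2} dx.
\end{equation*}
Since $|\sigma(I)| = 2^{-d}|I|$ and $\chi_{\sigma(I)} \leq \chi_I$ pointwise, the right-hand side is bounded by a purely dimensional multiple of the same expression with $|\sigma(I)|$ replaced by $|I|$ and $\chi_{\sigma(I)}$ by $\chi_I$.

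Next, because $\sigma(I) \subseteq I$ with $\ell(\sigma(I)) = \ell(I)/2$, the comparable-cube estimate \eqref{SmallBig} gives $|V_{\sigma(I)} \vec{e}| \lesssim |V_I \vec{e}|$ for every $\vec{e} \in \C$, allowing us to replace $V_{\sigma(I)}$ by $V_I$ throughout the square function. One final application of \eqref{LpEmbedding}, now in reverse, identifies the resulting expression with $\|\vec{f}\|_{L^p(W)}^p$ up to constants.

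No step here presents a genuine obstacle once \eqref{LpEmbedding} and \eqref{SmallBig} are available; the only conceptual point is the passage from a square function indexed by the shifted cubes $\sigma(I)$ to one indexed by $I$, which requires comparing reducing operators on cubes of comparable size and is handled by \eqref{SmallBig}. When $p \neq 2$ the resulting operator norm depends on $\|W\|_{\mathrm{A}_p}$ through the unspecified constants in \eqref{LpEmbedding}, in line with the convention announced in Section~\ref{SectionTLT}; when $p = 2$ the reducing operators simplify to $V_I = (m_I W)^{1/2}$ and the argument yields an explicit bound in terms of the Nazarov--Treil--Volberg square function constants in \eqref{UpperBound} and \eqref{LowerBound}.
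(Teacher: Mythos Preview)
Your proof is correct and follows essentially the same approach as the paper: apply \eqref{LpEmbedding} to $Q\vec{f}$, identify its Haar coefficients as shifted coefficients of $\vec{f}$, use \eqref{SmallBig} together with $\chi_{\sigma(I)} \leq \chi_I$ and $|\sigma(I)| \approx |I|$ to pass from the $\sigma(I)$-indexed square function to the $I$-indexed one, and then apply \eqref{LpEmbedding} in reverse. The paper carries out the same steps with the reindexing written in terms of parents rather than children, which is a purely notational difference.
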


\begin{proof} The proof follows easily from two applications of \eqref{LpEmbedding}.  In particular, note that \begin{equation*} Q\vec{f} = \sum_{\varepsilon' \in \S} \sum_{I' \in \D} \vec{f}_{I'} ^{\varepsilon '} h_{\sigma(I')}  ^{\sigma(\varepsilon')}. \end{equation*}  If $\tilde{I}$ denotes the parent of $I \in \D$ then  \eqref{LpEmbedding} in conjunction with \eqref{SmallBig} gives us that \begin{align*} \|Q\vec{f}\|_{L^p(W)} ^p  & \lesssim  \inrd \left(\sum_{I \in \sigma(\D)} \sum_{ \varepsilon' \in \S} \frac{|V_I \vec{f} _{\tilde{I}} ^{\varepsilon'} |}{|I|} \chi_I (x) \right)^\frac{p}{2} \, dx \\ & \lesssim \inrd \left(\sum_{I \in \sigma(\D)} \sum_{ \varepsilon' \in \S} \frac{|V_{\tilde{I}} \vec{f} _{\tilde{I}} ^{\varepsilon'} |}{|{\tilde{I}}|} \chi_{\tilde{I}} (x) \right)^\frac{p}{2} \, dx \\ & \lesssim \|\vec{f}\|_{L^p(W)} ^p. \end{align*} \end{proof}

\subsection{Proof of sufficiency}
We now prove sufficiency in Theorem \ref{CommutatorThm} \\

\textit{Proof of Sufficiency}. First write \begin{equation*} B = \sum_{I' \in \D} \sum_{\varepsilon' \in \S} B_{I'} ^{\varepsilon'} h_{I'} ^{\varepsilon '}, \ \ \ \vec{f} = \sum_{I \in \D} \sum_{\varepsilon \in \S} \vec{f}_{I} ^{\varepsilon} h_{I} ^{\varepsilon }\end{equation*} so that \begin{align*} [B, Q] \vec{f} & = \sum_{I \in \D} \sum_{\varepsilon \in \S}  \left( B \vec{f}_I ^\varepsilon Q h_I ^\varepsilon - Q (B h_I ^\varepsilon) \vec{f}_I ^\varepsilon \right) \\ & = \sum_{I, I' \in \D} \sum_{\varepsilon, \varepsilon' \in \S}  \left(  B_{I'} ^{\varepsilon'}   h_{I'} ^{\varepsilon '} (Q h_I ^\varepsilon) \vec{f}_I ^\varepsilon - B_{I'} ^{\varepsilon '} (Q  h_{I'} ^{\varepsilon '}  h_I ^\varepsilon) \vec{f}_I ^\varepsilon \right)  \\ & = \sum_{I, I' \in \D} \sum_{\varepsilon, \varepsilon' \in \S}  B_{I'} ^{\varepsilon'} \left([h_{I'} ^{\varepsilon '}, Q]h_I ^\varepsilon \right) \vec{f}_{I} ^\varepsilon. \end{align*}

\noindent Clearly there is no contribution if $I \cap I' = \emptyset$ and otherwise we have that

\begin{equation} \label{e.cases}
[  {h _{I'} ^{\varepsilon '}},  Q]h _{I} ^{\varepsilon}
=
\begin{cases}
0  &   I\subsetneq I'
\\
\pm\abs{I} ^{-1/2}  h ^{\sigma (\varepsilon)} _{\sigma (I)}
- Q (h ^{\epsilon'} _{I} h ^{\epsilon} _{I} )
& I=I'
\\
h ^{\varepsilon'} _{\sigma (I)}h ^{\sigma(\varepsilon)} _{\sigma(I)}
\pm \abs{I} ^{-1/2} h ^{\sigma (\varepsilon ')} _{\sigma ^2(I)}
&  I'=\sigma (I)
\\
  h ^{ \varepsilon '} _{I'} Q (h_I ^\varepsilon  ) - Q (h_I ^\varepsilon  h ^{ \varepsilon '} _{I'})
& I'\subsetneq I  \textup{ and } I' \neq \sigma(I).

\end{cases}
\end{equation}

Note that we can disregard sign changes thanks to the unconditionality of Theorem \ref{CarEmbedThm},  \eqref{LpEmbedding}, and Proposition \ref{HaarMultThm}, and we will not comment on this further in the proof.  When $I=I'$ we need to bound the two sums
\begin{equation}\sum_{I \in \D}\sum_{\varepsilon, \varepsilon' \in \S }  B_{I} ^{\varepsilon'}\vec{f}_{I} ^\varepsilon |I|^{-1/2} h_{\sigma(I)}^{\sigma(\varepsilon)}  \;\textup{ and }\; Q\left(\sum_{I \in \D}\sum_{\varepsilon, \varepsilon' \in \S }  B_{I} ^{\varepsilon'}  \vec{f}_{I} ^\varepsilon |I|^{-1/2}h_{I}^{\psi_{\varepsilon'}(\varepsilon)}\right). \label{DiagCommTerm}\end{equation} where $\psi_{\varepsilon'}(\varepsilon)$ is the signature defined by

\begin{equation*} h_I ^{\psi_{\varepsilon'} (\varepsilon)} =  |I|^\frac{1}{2} h_I^{\varepsilon} h_I^{\varepsilon' } \end{equation*} which is cancellative if and only if $\varepsilon \neq \varepsilon'$.

 However,  if $B \in \BMOW$ then condition $(b)$ in Theorem \ref{CarEmbedThm}  tells us that for $\epsilon, \epsilon'$ fixed and $\tilde{J}$ being the parent of $J \in \D$

 \begin{equation*} \sup_{J \in \sigma(\D)}  \| V_{J} (|\tilde{J}| ^{-\frac12} B_{\tilde{J}} ^{\epsilon'}) V_{\tilde{J}} ^{-1} \| \lesssim  \sup_{J \in \sigma(\D)}  \| V_{\tilde{J}} (|\tilde{J}| ^{-\frac12} B_{\tilde{J}} ^{\epsilon'}) V_{\tilde{J}} ^{-1} \|  < \infty \end{equation*} so that the first sum in \eqref{DiagCommTerm} can be estimated in a manner that is very similar to the proof of sufficiency in Proposition \ref{HaarMultThm} (that is, using \eqref{LpEmbedding} twice).

Note that the second sum of \eqref{DiagCommTerm} when $\varepsilon \neq \varepsilon'$ is also ``Haar multiplier like" and can be estimated in exactly the same way as the first sum in \eqref{DiagCommTerm}.    On the other hand, when $\epsilon = \epsilon'$ the second sum of \eqref{DiagCommTerm} becomes   \begin{equation*} Q \left(\sum_{I \in \D} \sum_{\varepsilon \in \S}  B_{I} ^{\varepsilon}  \vec{f}_{I} ^{\varepsilon} \frac{\chi_I}{|I|}  \right) = Q (\pi_{B^*} )^* \vec{f}. \end{equation*}

\noindent   However, by Corollary \ref{CorForSufficiency} we have that $B \in \BMOW$ if and only if $(\pi_{B^*})^*$ is bounded on $L^p(W)$.

We now look at the case when $I'=\sigma (I)$ which clearly gives us two sums corresponding to the two terms in \eqref{e.cases}.  For the first term, we obtain the sum
\begin{align*} \sum_{I  \in \D} \sum_{\varepsilon, \varepsilon' \in \S}  B_{\sigma(I)} ^{\varepsilon'} h ^{\varepsilon'} _{\sigma (I)}h ^{{\sigma(\varepsilon)}} _{\sigma(I)} \vec{f}_{I} ^\varepsilon & =  \sum_{I  \in \D} \sum_{\varepsilon \in \S}   B_{\sigma(I)} ^{\sigma(\varepsilon)}  \vec{f}_{I} ^\varepsilon \frac{\chi_{\sigma(I)}}{|\sigma(I)|}  \\ & + \sum_{I  \in \D} \sum_{\substack{\varepsilon, \varepsilon'  \in \S \\ \varepsilon' \neq \sigma(\varepsilon) } } |I|^{-\frac12} B_{\sigma(I)} ^{\varepsilon'} h_{\sigma(I)} ^ {\psi_{\varepsilon'}(\sigma(\varepsilon))} \vec{f}_{I} ^\varepsilon.\end{align*}

\noindent However, a simple computation gives us  \begin{equation*} \sum_{I  \in \D} \sum_{\varepsilon \in \S}   B_{\sigma(I)} ^{\sigma(\varepsilon)}  \vec{f}_{I} ^\varepsilon \frac{\chi_{\sigma(I)}}{|\sigma(I)|}    = (\pi_{B^*} )^* Q \vec{f} \end{equation*} which is bounded on $L^p(W)$.  Also, the second sum is again  ``Haar multiplier like" and can be estimated in easily in a manner that is similar to the proof of sufficiency for Proposition \ref{HaarMultThm}.

 Furthermore, for the second sum in the two terms when $I' = \sigma(I)$, we need to bound
\begin{equation*} \sum_{I \in \D} \sum_{\varepsilon, \varepsilon' \in \S}  B_{\sigma(I)} ^{\varepsilon'} \abs{I} ^{-1/2} h ^{\sigma (\varepsilon ')} _{\sigma ^2(I)}  \vec{f}_{I} ^\varepsilon \end{equation*} which yet again is  ``Haar multiplier like" and can be estimated in a manner that is similar to the proof of sufficiency for Proposition \ref{HaarMultThm}

To finally finish the proof of sufficiency we bound the triangular terms.  First, if $I \supsetneq I'$ then obviously $h_I ^\varepsilon$ is constant on $I'$.  Thus,  \begin{align*}  \sum_{I'\in \D} \sum_{I \supsetneq I'} \sum_{\varepsilon, \varepsilon' \in \S}  B_{I'} ^{\varepsilon'} Q (h_I ^\varepsilon  h ^{ \varepsilon '} _{I'})  \vec{f}_I ^\varepsilon & =  \sum_{I' \in \D} \sum_{ \varepsilon'  \in \S} B_{I'} ^{\varepsilon'} Q(h ^{\varepsilon '} _{I'}) \sum_{I \supsetneq I'} \sum_{\varepsilon \in \S}    \vec{f}_{I} ^\varepsilon h_I ^\varepsilon  \\ & = \sum_{I' \in \D}\sum_{ \varepsilon' \in \S} B_{I'} ^{\varepsilon'} Q h_{I'} ^{\varepsilon'} m_{I'} \vec{f} \\ & = Q \pi_B \vec{f}. \end{align*}

\noindent Now clearly $h ^{ \varepsilon '} _{I'} Q (h_I ^\varepsilon  ) = 0$ if $I' \cap \sigma(I) = \emptyset$. Furthermore, since  $I \supsetneq I'$  and $I' \neq \sigma(I)$, we must have $\sigma(I) \supsetneq I'$  so that  \begin{align*}  \sum_{I'\in \D} \sum_{I \supsetneq I'} \sum_{\varepsilon, \varepsilon' \in \S}  B_{I'} ^{\varepsilon'} h ^{ \varepsilon '} _{I'} Q (h_I ^\varepsilon  ) \vec{f}_{I} ^\varepsilon  & = \sum_{I'\in \D} \sum_{\varepsilon' \in \S}  B_{I'} ^{\varepsilon'} h ^{ \varepsilon '} _{I'} \sum_{I: \sigma(I) \supsetneq I'}  \sum_{ \varepsilon \in \S} h_{\sigma(I)}  ^{\sigma(\varepsilon)}  \vec{f}_{I} ^\varepsilon \\ & = \sum_{I' \in \D} \sum_{ \varepsilon' \in \S} B_{I'} ^{\varepsilon'}  h_{I'} ^{\varepsilon'} m_{I'} (Q\vec{f}) \\ & = \pi_B Q \vec{f} \end{align*}

\noindent which is obviously bounded on $L^p(W)$. The proof of sufficiency is now complete.  \hfill $\square$.

We now prove \eqref{CommQuant}. \\

\textit{Proof of} \eqref{CommQuant}. Note that the simple arguments used to prove sufficiency in Proposition \ref{HaarMultThm} combined with \eqref{UpperBound} and \eqref{LowerBound} shows that (as was noticed in \cite{BPW, CW}) \begin{equation*} \|T_A \|_{L^2(W) \rightarrow L^2(W)} \lesssim \Atwo{W}^\frac32 \log \Atwo{W} \left(\sup_{I \in \D, \ \varepsilon \in \S} \|V_I A_I ^\varepsilon V_I ^{-1}\| \right). \end{equation*}

Also a careful reading of the proof of sufficiency in Theorem \ref{CommutatorThm} reveals that \begin{align*}  \|[T, B] \|_{L^2(W) \rightarrow L^2(W)} & \lesssim \|Q\|_{L^2(W) \rightarrow L^2(W)} \max \{\|\pi_B\|_{L^2(W)\rightarrow L^2(W)} ,  \|\pi_{B^*}\|_{L^2(W^{-1} )\rightarrow L^2(W^{-1})}\} \\ &  +  \Atwo{W} ^\frac32 \log \Atwo{W} \|B\|_{*} ^\frac12\end{align*} where again $\|B\|_*$ is the canonical supremum in condition (b) of Theorem \ref{CarEmbedThm}  \hfill $\square$.

\subsection{Proof of necessity}

For the proof of necessity in Theorem \ref{CommutatorThm} we will use some simple ideas from \cite{J}.  We will in fact prove the following more general result for commutators with kernels considered in \cite{J}.

\begin{theorem} Let $K : \mathbb{R}^d \backslash \{0\} \rightarrow \mathbb{R}$  be not identically zero, be homogenous of degree $-d$, have mean zero over the unit sphere $\partial \mathbb{B}_d$, and satisfy $K \in C^{\infty} (\partial \mathbb{B}_d)$ (so in particular $K$ could be any of the Riesz kernels).  If $T$ is the (convolution) CZO associated to $K$, then we have that $[T, B]$ being bounded on $L^p(W)$ implies that $B \in \BMOW$. \label{KernelThm} \end{theorem}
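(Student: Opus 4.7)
The plan is to adapt the classical Janson argument from \cite{J} to the matrix weighted setting, using the reducing-operator apparatus developed earlier in the paper. First I fix $\eta \in \Rd$ with $|\eta|$ large enough that $K$ is bounded away from zero on the ball $B(-\eta, \sqrt{d})$; such $\eta$ exists because $K$ is smooth on the sphere and non-vanishing on some ray. On this ball, $1/K$ is smooth, so after a standard cutoff-and-periodization argument I obtain an absolutely convergent Fourier expansion $1/K(z) = \sum_k c_k e^{i v_k \cdot z}$ for $z \in B(-\eta, \sqrt{d})$. For each cube $Q$ of side $r$ centered at $x_Q$, set $Q' := Q + r\eta$; then $|Q'| = |Q|$, one has $(x-y)/r \in B(-\eta, \sqrt{d})$ for every $x \in Q$ and $y \in Q'$, and both cubes sit inside a common cube $\widetilde{Q}$ of comparable size.

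For $2 \leq p < \infty$, the aim is to bound $|Q|^{-1}\int_Q \|W^{\frac{1}{p}}(x)(B(x)-m_Q B)V_Q^{-1}\|^p \, dx$ uniformly in $Q$, and by Lemma \ref{InfLem} it suffices to do this with $m_{Q'}B$ in place of $m_Q B$, at the cost of a factor $1 + \|W\|_{\text{A}_p}^{1/p}$. Starting from the averaging identity $|Q'|(B(x)-m_{Q'}B) = -\int_{Q'}(B(y)-B(x))\,dy$, I insert $1 = K(x-y)\cdot K(x-y)^{-1}$, use the homogeneity relation $K(x-y)^{-1} = r^d K((x-y)/r)^{-1}$, and substitute the Fourier expansion. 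Applied to a column $\vec{v}_j := V_Q^{-1}\vec{e}_j$, this yields the pointwise identity
\begin{equation*}
(B(x) - m_{Q'} B)\vec{v}_j \;=\; -\sum_k c_k\, e^{i v_k \cdot x / r}\, [T,B](g_k \vec{v}_j)(x), \qquad g_k(y) := \chi_{Q'}(y)\, e^{-i v_k \cdot y / r},
\end{equation*}
valid for $x \in Q$ (using $r^d/|Q'|=1$). Taking the $L^p(W)$ norm over $Q$, applying the triangle inequality together with the hypothesis that $[T,B]$ is bounded on $L^p(W)$, and noting $|g_k| = 1$ on $Q'$, I reduce the whole matter to the estimate $\int_{Q'}|W^{\frac{1}{p}}(y)\vec{v}_j|^p\,dy \lesssim |Q|$. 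But \eqref{SmallBig} and \eqref{BigSmall} applied to $Q, Q' \subseteq \widetilde{Q}$ give $|V_{Q'}\vec{v}_j| \lesssim \|W\|_{\text{A}_p}^{1/p} |V_Q \vec{v}_j| = \|W\|_{\text{A}_p}^{1/p}$ since $V_Q \vec{v}_j = \vec{e}_j$, which is exactly what is needed. Summing over $j = 1, \ldots, n$ and using the equivalence, up to dimensional constants, between the matrix norm of $M V_Q^{-1}$ and $(\sum_j |M\vec{v}_j|^p)^{1/p}$ delivers the $\BMOW$ condition for $2 \leq p < \infty$.

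For $1 < p \leq 2$ I would argue by duality. Since $[T,B]^* = -[T^*,B^*]$ and the dual of $L^p(W)$ is $L^{p'}(W^{1-p'})$, the boundedness of $[T,B]$ on $L^p(W)$ is equivalent to that of $[T^*,B^*]$ on $L^{p'}(W^{1-p'})$. The kernel of $T^*$ is $K(y-x)$, which satisfies the same hypotheses as $K$, and $W^{1-p'}$ is a matrix A${}_{p'}$ weight with $p' \geq 2$, so applying the case just proved gives $B^* \in \BMOWq$. Unwinding this using $V_I(W^{1-p'},p') = V_I'(W,p)$ and $(W^{1-p'})^{1/p'} = W^{-1/p}$ recovers exactly the second clause in the definition of $\BMOW$. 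The main technical point I anticipate having to be careful about is the precise choice of $\eta$ and the verification that the resulting Fourier series of $1/K$ converges absolutely on $B(-\eta, \sqrt{d})$; beyond that, the argument is essentially a matrix adaptation of Janson's scalar proof, with the matrix difficulties handled cleanly by Lemma \ref{InfLem} and the reducing-operator comparisons \eqref{SmallBig}--\eqref{BigSmall}.
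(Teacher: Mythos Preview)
Your proposal is correct and follows essentially the same route as the paper: both adapt Janson's Fourier-expansion argument for $1/K$ to the matrix weighted setting, reduce to the translated cube $Q'$, invoke Lemma \ref{InfLem} to pass from $m_{Q'}B$ back to $m_Q B$, and control $\|\chi_{Q'} W^{1/p} V_Q^{-1}\|_{L^p}$ via the reducing-operator comparisons \eqref{SmallBig}--\eqref{BigSmall} through the common enlargement $\widetilde{Q}$; the duality reduction for $1 < p \leq 2$ is likewise the same. The only cosmetic difference is that the paper tests against the normalized matrix $S_Q(x) = \chi_Q(x)\,\dfrac{V_Q^{-1}(B^*(x)-m_{Q'}B^*)W^{1/p}(x)}{\|V_Q^{-1}(B^*(x)-m_{Q'}B^*)W^{1/p}(x)\|}$ to extract the full matrix norm pointwise, whereas you test column-by-column with $\vec v_j = V_Q^{-1}\vec e_j$ and recombine at the end; and your duality step is actually a touch more precise than the paper's, since you correctly track that the adjoint involves $T^*$ (whose kernel $K(-\cdot)$ satisfies the same hypotheses) rather than $T$.
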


\begin{proof} First note that it is enough to prove that $B$ satisfies the first condition in the definition of $\BMOW$ when $2 \leq p < \infty$ and $[T, B]$ is bounded on $L^p(W)$ since \begin{equation*} (W^\frac{1}{p} [T, B] W^{-\frac{1}{p}} )^* = - W^{-\frac{1}{p}} [T, B^*] W^\frac{1}{p} \end{equation*} which will allow us to immediately conclude that the second condition in the definition of $\BMOW$ is true when $1 \leq p < 2$ as $W^{1 - p'}$ is a matrix A${}_{p'}$ weight and $2 \leq p' < \infty$.  Now by assumption, there exists $z_0 \neq 0 $  and $\delta > 0$ where $\frac{1}{K(x)}$ is smooth on $|x - z_0| < \sqrt{d} \delta$, and thus can be expressed as an absolutely convergent Fourier series \begin{equation*} \frac{1}{K(x)} = \sum a_n e^{i v_n \cdot x} \end{equation*} \noindent for $|x - z_0| < \sqrt{d} \delta$ (where the exact nature of the vectors $v_n$ is irrelevant.)  Set $z_1 = \delta^{-1} z_0$.  Thus, if $|x - z_1| < \sqrt{d}$, then we have by homogeneity

\begin{equation*} \frac{1}{K(x)} = \frac{\delta^{-d}}{K(\delta x)} = \delta^{-d} \sum a_n e^{i v_n \cdot (\delta x)}. \end{equation*} \noindent Now for any cube $Q = Q(x_0, r)$ of side length $r$ and center $x_0$, let $y_0 = x_0 - rz_1$ and $Q' = Q(y_0, r)$ so that $x \in Q$ and $y \in Q'$ implies that \begin{equation*} \left|\frac{x - y}{r} - z_1\right|  \leq \left| \frac{x - x_0}{r} \right| + \left| \frac{y - y_0}{r} \right|\leq \sqrt{d}. \end{equation*}

Let \begin{equation*} S_Q (x) = \chi_Q (x) \frac{V_Q ^{-1} (B^*(x) - m_{Q'} B^* ) W^{\frac{1}{p}} (x)} {\|V_Q ^{-1} (B^*(x) - m_{Q'} B^* ) W^{\frac{1}{p}} (x)\|} \end{equation*}  so that \begin{align}\frac{1}{r^d} &  \left\|\int_\Rd  W^{\frac{1}{p}} (x) (B(x) -B(y)) V_Q ^{-1} \frac{r^d K(x - y)}{K(\frac{x-y}{r})}  S_Q (x)  \chi_{Q'} (y) \, dy  \right\| \label{CommEst1} \\ & = \chi_Q (x)  \frac{1}{r^d} \left\| \int_{Q'}  W^{\frac{1}{p}} (x) (B(x) -B(y)) V_Q ^{-1}  \frac{(W^{\frac{1}{p}} (x)  (B(x) - m_{Q'} B ) V_Q ^{-1}  )^* }{\|W^{\frac{1}{p}} (x)  (B(x) - m_{Q'} B ) V_Q ^{-1} \|}  \, dy \right\| \nonumber \\ & = \chi_Q (x) \left\|\frac{ W^{\frac{1}{p}} (x)  (B(x) - m_{Q'} B ) V_Q ^{-1} (W^{\frac{1}{p}} (x)  (B(x) - m_{Q'} B ) V_Q ^{-1}  )^*}{\| W^{\frac{1}{p}} (x)  (B(x) - m_{Q'} B ) V_Q ^{-1}\|} \right\| \nonumber \\ & = \chi_Q (x) \| W^{\frac{1}{p}} (x)  (B(x) - m_{Q'} B ) V_Q ^{-1}\|. \nonumber \end{align}

However, \begin{align*} \eqref{CommEst1} &  \leq \sum |a_n| \left\| W^\frac{1}{p}(x) \left(\inrd (B(x) - B(y)) K(x - y)  e^{- i \frac{\delta}{r} v_n \cdot y} V_Q ^{-1}  \chi_{Q'} (y) \, dy \right)S_Q (x)  e^{ i \frac{\delta}{r} v_n \cdot x} \right\|  \\ & = \sum |a_n| \left\| W^{\frac{1}{p}}(x) ([T, B] g_n) (x) f_n(x) \right\| \\ & \leq  \sum |a_n| \left\| W^{\frac{1}{p}}(x) [T, B] g_n (x)  \right\| \end{align*} where \begin{equation*} g_n(y) =   e^{- i \frac{\delta}{r} v_n \cdot y } V_Q ^{-1} \chi_{Q'} (y) ,\ \ \ \ f_n(x) = S_Q(x)  e^{ i \frac{\delta}{r} v_n \cdot x } \end{equation*}  and where the last inequality follows from the fact that $\|f_n(x)\| \leq 1$ for a.e. $x \in \Rd$.

But as $|x_0 - y_0 | = r \delta^{-1} z_0$, we can pick some $C > 1$ only depending on $K$ where  $\tilde{Q} = Q(x_0, C r) $ satisfies $Q \cup Q' \subseteq \tilde{Q}$.  Combining this with the previous estimates, we have from the absolute summability of the $a_n's $ and the boundedness of $[T, B]$ that \begin{align*} \left(\int_{Q} \| W^{\frac{1}{p}} (x)  (B(x) - m_{Q'} B ) V_Q ^{-1}\| ^p \, dx \right) ^{\frac{1}{p}} & \leq \sum |a_n| \| W^{\frac{1}{p}} [T, B] g_n  \|_{L^p}  \\ & \leq  \sum |a_n| \| W^{\frac{1}{p}} [T, B] W^{-\frac{1}{p}} (W^\frac{1}{p} g_n) \|_{L^p} \\ & \leq   \sup_n  \| W^{\frac{1}{p}} g_n\|_{L^p } \\ & \leq \|\chi_{Q'}  W^{\frac{1}{p}} V_Q ^{-1}\|_{L^p} \\ & \lesssim \|W\|_{\text{A}_p} ^\frac{1}{p} \|\chi_{\tilde{Q}}  W^{\frac{1}{p}} V_{\tilde{Q}} ^{-1}\|_{L^p} \\ &  \lesssim   |Q|^\frac{1}{p} \|W\|_{\text{A}_p} ^\frac{1}{p} \end{align*} \noindent where the second to last inequality is due to \eqref{BigSmall}.  The proof is now complete thanks to Lemma \ref{InfLem}.
\end{proof}

Lastly in this section we will give quick proofs of Corollary \ref{BMODefCor} and Proposition \ref{BMOvsDyadicBMO}. \\

  \textit{Proof of Corollary} \ref{BMODefCor}. If $B \in \BMOW$ then from the proof of Theorem \ref{KernelThm} we have \begin{equation*}    \sup_{\substack{I \subset \R^d \\  I \text{ is a cube}}} \frac{1}{|I|} \int_I \|W^\frac{1}{p} (x) (B(x) - m_I B) V_I ^{-1}  \|^p \, dx < \infty. \end{equation*} On the other hand, again since  \begin{equation*} (W^\frac{1}{p} [T, B] W^{-\frac{1}{p}} )^* = - W^{-\frac{1}{p}} [T, B^*] W^\frac{1}{p} \end{equation*} we can again use the proof of Theorem \ref{KernelThm}  to get that the dual condition\begin{equation*} \sup_{\substack{I \subset \R^d \\  I \text{ is a cube}}} \frac{1}{|I|} \int_I \|W^{-\frac{1}{p}} (x) (B^* (x) - m_I B^*) (V_I ')^{-1}  \|^{p'} \, dx < \infty \end{equation*} is true since $W^{1 - p'}$ is a matrix A${}_{p'}$ weight.  The proof is now complete. \hfill $\square$ \\

 \textit{Proof of Proposition } \ref{BMOvsDyadicBMO}.  It is well known (see \cite{LN}) that if  $\D^t = \{2^{-k} ([0, 1) ^d + m  + (-1)^k t)  : k \in \Z, m \in \Z^d\}$, then for any cube $I$ there exists $1\leq t\leq 2^d $ and $I_t\in \D^t$ such that $I\subset I_t$ and $\ell(I_t)\leq 6\ell(Q)$.  Thus, Lemma \ref{InfLem} gives us

 \begin{align*} \frac{1}{|I|} \int_I \|W^\frac{1}{p} (x) (B(x) - m_I B) V_I ^{-1}  \|^p \, dx & \lesssim \frac{1}{|I|} \int_I \|W^\frac{1}{p} (x) (B(x) - m_{I_t} B) V_I ^{-1}  \|^p \, dx \\ & \lesssim \frac{1}{|I_t|} \int_{I_t} \|W^\frac{1}{p} (x) (B(x) - m_{I_t} B) V_{I_t}  ^{-1}  \|^p \, dx \end{align*} which completes the proof.  \hfill $\square$.

\section{Counterexamples and other quantitative estimates}
\label{ParaprodCounterex}
In this last section we will produce the counterexamples mentioned in the introduction and additionally prove quantitative matrix weighted bounds for maximal functions and sparse operators.

\subsection{Counterexamples}

For the rest of this section let \begin{equation*} A := \left(\begin{array}{cc} 0 & 1 \\  1 & 0 \end{array} \right) \text{ and }  \ W :=  \left(\begin{array}{cc} |x|^\alpha & 0 \\  0 & |x|^{-\alpha} \end{array} \right) \end{equation*} for $x \in \mathbb{R}$, where $0 < \alpha < 1$  so that $W$ is trivially a matrix A${}_2$ weight on $\R$ since $W$ is diagonal.  Also in this section let $\D$ be the standard dyadic grid on $\R$.

\begin{proposition}  There exists a sequence $\{A_I \}_{I \in \D}$  where $T_A$ is not bounded on $L^2(W)$.   \end{proposition}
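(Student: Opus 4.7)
The plan is direct: Proposition \ref{HaarMultThm} reduces boundedness of $T_A$ on $L^2(W)$ to the condition $\sup_{I \in \D}\|V_I A_I V_I^{-1}\|<\infty$ (in one dimension $\S = \{0\}$, so I drop the $\varepsilon$ superscript). Since the introduction records that at $p = 2$ one may take $V_I = (m_I W)^{1/2}$, the proof reduces to an explicit computation of $V_I$ on suitably chosen dyadic intervals. I will take the constant sequence $A_I \equiv A$ and show the above supremum is infinite already along the shrinking intervals $I_n = [0, 2^{-n}) \in \D$ for $n \ge 0$.

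Since $W$ is diagonal, so is $m_{I_n} W$, and an elementary integration yields
\[
m_{I_n}(|x|^\alpha) = \frac{2^{-n\alpha}}{\alpha + 1}, \qquad m_{I_n}(|x|^{-\alpha}) = \frac{2^{n\alpha}}{1 - \alpha},
\]
so that $V_{I_n} = \operatorname{diag}\bigl(c_1\, 2^{-n\alpha/2},\, c_2\, 2^{n\alpha/2}\bigr)$ with $c_1, c_2$ depending only on $\alpha$. Conjugating $A$ by this diagonal matrix produces a matrix with zero diagonal, $(1,2)$ entry equal to $(c_1/c_2)\,2^{-n\alpha}$, and $(2,1)$ entry equal to $(c_2/c_1)\,2^{n\alpha}$. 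Its operator norm is therefore comparable to $2^{n\alpha}$, which tends to infinity as $n \to \infty$. Hence $\sup_{I \in \D}\|V_I A V_I^{-1}\| = \infty$, and Proposition \ref{HaarMultThm} forces $T_A$ to be unbounded on $L^2(W)$.

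There is essentially no obstacle here, as the construction is conceptual rather than technical. The off-diagonal symbol $A$ swaps the two coordinate directions, while $W$ becomes extremely anisotropic on dyadic intervals touching the origin (one eigenvalue of $m_{I_n}W$ is of order $2^{-n\alpha}$, the other of order $2^{n\alpha}$). Conjugation of $A$ by $V_I$ then amplifies one entry of $A$ by the eigenvalue ratio of $V_I$, which is unbounded as $I$ shrinks to $\{0\}$. The only minor verification needed is that $[0, 2^{-n})$ is a genuine member of the standard dyadic grid on $\R$, which is immediate.
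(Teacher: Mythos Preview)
Your proof is correct and follows essentially the same approach as the paper: both take the constant sequence $A_I \equiv A$, test on the intervals $I_N = [0,2^{-N})$, and invoke Proposition \ref{HaarMultThm} after showing $\|V_{I_N} A V_{I_N}^{-1}\|\to\infty$. Your direct computation of $V_{I_N} A V_{I_N}^{-1}$ is in fact slightly cleaner than the paper's, which detours through the lower bound $\|(m_{I_N}W)^{1/2} A\,(m_{I_N}W^{-1})^{1/2}\|$ rather than computing $(m_{I_N}W)^{-1/2}$ explicitly.
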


\begin{proof}
We will in fact prove that there exists a constant sequence $\{A_I \}_{I \in \D}$ with the above property, and in particular let $\{A_I\}_{I \in \D}$  be the constant sequence $A_I  = A$.  For $I_N = [0, 2^{-N})$ we have  \begin{align*} \lim_{N \rightarrow \infty} \|(m_{I_N} W)^\frac12 A (m_{I_N} W) ^{-\frac12} \| & \geq \lim_{N \rightarrow \infty} \|(m_{I_N} W)^\frac12 A (m_{I_N} W^{-1}) ^{\frac12}\| \\ & = \lim_{N \rightarrow \infty} \frac{2^{\alpha N}}{(1 - \alpha)^2} = \infty. \end{align*} An application of Proposition \ref{HaarMultThm} now says that $T_A$ is not bounded on $L^2(W)$. \end{proof}

We will now show that $B \in \text{BMO}$ and $W$ being a matrix A${}_2$ weight is not sufficient for $\pi_B$ with respect to $\D$ to be   bounded on $L^2(W)$.

\begin{proposition} There exists a matrix function $B $ with scalar BMO entries where $\pi_B$ is not bounded on $L^2(W)$, and consequently $[T, B]$ is not bounded on $L^2(W)$ where $T$ is any of the Riesz transforms.  \end{proposition}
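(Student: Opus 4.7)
The plan is to exhibit $B(x) := (\log|x|)\, A$, where $A$ is the constant matrix fixed at the start of this section. Since $\log|\cdot| \in \text{BMO}(\mathbb{R})$, every entry of $B$ (each equal to $0$ or $\log|\cdot|$) lies in scalar BMO. The goal is to show directly that $B \notin \BMOW$ (at $p = 2$). From this both conclusions follow: the commutator statement by Theorem \ref{CommutatorThm}, and the paraproduct statement (for an appropriate shifted dyadic grid $\D^t$) by combining Proposition \ref{BMOvsDyadicBMO} with Theorem \ref{ParaThm}. The basic idea is that the singularity of $W$ at the origin forces a blowup of the matrix BMO seminorm along intervals collapsing to $0$, even though the scalar symbol is only logarithmically large there.

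To violate the defining supremum of $\BMOW$ I will test on the intervals $I_k := [0, 2^{-k})$. Two routine computations are needed: first, the scalar average $m_{I_k}\log|x| = -k\log 2 - 1$, giving $B(x) - m_{I_k} B = (\log(2^k|x|) + 1)\, A$ on $I_k$; second, since $p = 2$, the reducing operator $V_{I_k} = (m_{I_k} W)^{1/2}$ is diagonal with entries of order $2^{-k\alpha/2}$ and $2^{k\alpha/2}$. Because $W^{1/2}$ and $V_{I_k}^{-1}$ are diagonal while $A$ is anti-diagonal, the product $W^{1/2}(x)\, A\, V_{I_k}^{-1}$ is again anti-diagonal, and inside $I_k$ its $(2,1)$ entry (of size $\asymp |x|^{-\alpha/2}\, 2^{k\alpha/2}$) dominates, so
\begin{equation*}
\|W^{1/2}(x)\, A\, V_{I_k}^{-1}\|^2 \;\gtrsim\; |x|^{-\alpha}\, 2^{k\alpha} \quad \text{on } I_k.
\end{equation*}

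Combining these two inputs and making the substitution $u = 2^k x$ converts the $\BMOW$ testing integral on $I_k$ into
\begin{equation*}
\frac{1}{|I_k|}\int_{I_k} \|W^{1/2}(x)(B(x) - m_{I_k}B) V_{I_k}^{-1}\|^2 \, dx \;\gtrsim\; 2^{2k\alpha}\int_0^1 (\log u + 1)^2\, u^{-\alpha}\, du,
\end{equation*}
and the last integral is a positive finite constant because $\alpha \in (0, 1)$. Letting $k \to \infty$ produces the required blowup, so $B \notin \BMOW$, and the two nonboundedness statements follow as described. There is no real obstacle; the only place where one must pause is the matrix product $W^{1/2} A V_{I_k}^{-1}$, but the diagonal-versus-anti-diagonal pattern makes this essentially automatic.
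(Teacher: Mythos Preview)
Your argument is correct and uses the same example $B(x)=(\log|x|)\,A$ as the paper, but the route is genuinely different. The paper tests the operator $\pi_B$ directly: it chooses $J_N=[2^{-N-1},2^{-N})$ and $\vec f_N=\chi_{J_N}W^{-1/2}\vec e$, then uses the matrix weighted square function equivalence \eqref{LpEmbedding} to bound $\|\pi_B W^{-1/2}\vec f_N\|_{L^2(W)}^2$ from below by a quantity of order $2^{3\alpha N}$, while $\|\vec f_N\|_{L^2}^2\approx 2^{\alpha N}$. You instead test the defining $\BMOW$ integral on $I_k=[0,2^{-k})$, obtain growth $\gtrsim 2^{2k\alpha}$, and then invoke Theorems~\ref{CommutatorThm} and~\ref{ParaThm} as black boxes. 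Your route is more elementary computationally (a single explicit integral, no Haar coefficients or square function), at the price of depending on the full characterization theorems; the paper's route is self-contained in that it exhibits an explicit blowup of the operator norm without appealing back to the main results.

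One small point: your detour through Proposition~\ref{BMOvsDyadicBMO} and the parenthetical ``for an appropriate shifted dyadic grid $\D^t$'' are unnecessary and slightly weaken the conclusion. Your test intervals $I_k=[0,2^{-k})$ already belong to the standard dyadic grid $\D$ fixed at the start of the section, so your computation directly shows $B\notin\BMOWD$ for that grid, and Theorem~\ref{ParaThm} then gives unboundedness of $\pi_B$ with respect to the standard $\D$, matching the proposition as stated.
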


\begin{proof} Let $B (x) := (\log |x|) A$ and let $J_N = [2^{-N - 1}, 2^{-N})$ for $N \in \mathbb{N}$.   Now assume that $N \in \mathbb{N}$ is in fact large enough where  \begin{equation*}  \frac{1}{|J_N|} \sum_{I \in \D(J_N)} |b_I|^2 > \frac{1}{2} \|b\|_{\text{BMO}}.  \end{equation*}    Let $\vec{f}_N := \chi_{J_N}  W^{- \frac{1}{2}}  \vec{e} $ where \begin{equation*} \vec{e}  := \left(\begin{array}{c} 1 \\0 \end{array} \right). \end{equation*}

By \eqref{LpEmbedding} we have \begin{align} \|\pi_B W^{- \frac{1}{2}}  \vec{f}_N \|_{L^2(W)} ^2 & \gtrsim  \sum_{I \in \D({J_N})} |(m_I W)^\frac12  B_I m_I(W^{-1}) A \vec{e} |^2 \nonumber \\ & =   \sum_{I \in \D({J_N})} |b_I| ^2 |(m_I W)^\frac12 A  m_I(W^{-1})  \vec{e} |^2 \nonumber  \\ & \gtrsim     \frac{\|b\|_{\text{BMO}}}{2} 2^{3\alpha  N}. \nonumber \end{align}  However, \begin{align} \|\vec{f}_N\|_{L^2} ^2 & = \int_{J_N} |W^{- \frac{1}{2}} (t)   \vec{e} |^2 \, dt \nonumber \\ & \approx  2^{\alpha  N} \nonumber  \end{align} which shows that $\pi_B $ can not be bounded on $L^2(\R;\Ctwo)$.
\end{proof}

\subsection{Maximal function and sparse operator bounds}

We will end this paper with some quantitative weighted norm inequalities that were mentioned earlier in the paper.  Now let \begin{equation*} M_W ' \V{f} (x) = \sup_{I \ni x} \frac{1}{|I|} \int_I |(m_I (W^{-1}) )^{-\frac{1}{2}} W^{-\frac{1}{2}} (y) \V{f} (y)| \, dy \end{equation*} and
\begin{equation*} M_W  \V{f} (x) = \sup_{I \ni x} \frac{1}{|I|} \int_I |W^\frac{1}{2} (x) W^{-\frac{1}{2}} (y) \V{f} (y)| \, dy \end{equation*} where the supremum is over dyadic cubes $I$ taken from some fixed dyadic grid. Note that the proofs of the next three results are slight modifications to the corresponding ones in \cite{CG} (which is where $M_W$ and a slight variation of $M_W '$ were first defined).

\begin{lemma} \label{IntMaxEst} $M_W '$ is bounded on $L^2$ and in particular \begin{equation*} \|M_W '  \|_{L^2 \rightarrow L^2}^2  \lesssim \Atwo{W}. \end{equation*} \end{lemma}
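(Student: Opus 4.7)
The plan is to reduce $M_W'$ to the scalar dyadic Hardy-Littlewood maximal function $M_d$, with the factor $\Atwo{W}^{1/2}$ extracted cleanly from the matrix $\text{A}_2$ condition via the reducing operators introduced in the preliminaries.

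Since $V_I' = (m_I W^{-1})^{1/2}$ when $p = 2$, the reverse $\text{A}_p$ inequality \eqref{ReverseAp} amounts to $V_I \ge (V_I')^{-1}$ as positive operators, which gives $|(V_I')^{-1} W^{-1/2}(y) \vec{f}(y)| \le |V_I W^{-1/2}(y) \vec{f}(y)|$, and hence
\begin{equation*}
M_W' \vec{f}(x) \le \sup_{I \ni x} \frac{1}{|I|}\int_I |V_I W^{-1/2}(y) \vec{f}(y)|\, dy.
\end{equation*}
First I would apply Cauchy-Schwarz to the inner integral and use $\|V_I W^{-1/2}(y)\|^2 \le \tr(V_I^2 W^{-1}(y))$, obtaining
\begin{equation*}
\frac{1}{|I|}\int_I \|V_I W^{-1/2}(y)\|^2 \, dy \le \tr(V_I^2 (V_I')^2) \lesssim n \|V_I V_I'\|^2 \lesssim n\,\Atwo{W},
\end{equation*}
by the trace/operator-norm equivalence for positive matrices and the reducing-operator characterization of the $\text{A}_2$ condition. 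This yields the pointwise domination $M_W'\vec{f}(x) \lesssim \Atwo{W}^{1/2}\bigl(M_d|\vec{f}|^2(x)\bigr)^{1/2}$, which already delivers the weak $(2,2)$ bound with the desired sharp constant.

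To upgrade from weak $(2,2)$ to strong $(2,2)$ with the same constant, the plan is to replace the Cauchy-Schwarz step above by Hölder at an exponent slightly larger than $2$, invoking a reverse Hölder inequality for the scalar weight $y \mapsto \|V_I W^{-1/2}(y)\|^2$ (whose $\text{A}_\infty$-character follows from the matrix $\text{A}_2$-character of $W$, with $\text{A}_\infty$-constants controlled by $\Atwo{W}$). This produces a bound $M_W'\vec{f}(x) \lesssim \Atwo{W}^{1/2}\bigl(M_d|\vec{f}|^{r'}(x)\bigr)^{1/r'}$ for some $r' < 2$; squaring both sides and invoking the strong-type $L^{2/r'}$ boundedness of $M_d$ (valid since $2/r' > 1$) concludes $\|M_W'\vec{f}\|_{L^2}^2 \lesssim \Atwo{W}\|\vec{f}\|_{L^2}^2$.

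The main obstacle is the sharp tracking of $\Atwo{W}^{1/2}$ across these two steps: the Cauchy-Schwarz/trace computation is essentially forced by the structure of the reducing operators and introduces no spurious $\Atwo{W}$-dependence, while the reverse Hölder improvement at exponent $r > 2$ must be executed uniformly in $I$ so that the subsequent $L^{2/r'}$ bound for $M_d$ does not contribute additional powers of $\Atwo{W}$.
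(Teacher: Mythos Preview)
Your overall architecture (H\"older with an exponent slightly above $2$, reverse H\"older for the matrix-weight factor, then the $L^{2/r'}$ bound for the scalar dyadic maximal function) is exactly the paper's, but your very first move---replacing $(V_I')^{-1}=(m_I W^{-1})^{-1/2}$ by $V_I=(m_I W)^{1/2}$ via \eqref{ReverseAp}---costs you the sharpness you are trying to track. After that substitution the H\"older/trace step produces
\[
\left(\frac{1}{|I|}\int_I \|V_I W^{-1/2}(y)\|^{2}\,dy\right)^{1/2}\approx \|V_I V_I'\|\lesssim \Atwo{W}^{1/2},
\]
whereas if you keep the original $(V_I')^{-1}$ the same computation yields
\[
\left(\frac{1}{|I|}\int_I \|(m_I W^{-1})^{-1/2} W^{-1/2}(y)\|^{2}\,dy\right)^{1/2}
=\bigl(\tr \text{Id}\bigr)^{1/2}=\sqrt{n},
\]
a dimensional constant with no $\Atwo{W}$ at all. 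This is exactly what the paper exploits.

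The second issue is your hope that ``the subsequent $L^{2/r'}$ bound for $M_d$ does not contribute additional powers of $\Atwo{W}$.'' It does, and must: the reverse H\"older exponent for the scalar weights $y\mapsto \|A\,W^{-1/2}(y)\|^{2}$ (for either choice $A=V_I$ or $A=(V_I')^{-1}$) satisfies $\epsilon\approx \Atwo{W}^{-1}$, so $r'=\frac{2+\epsilon}{1+\epsilon}$ gives $2/r'-1\approx\epsilon$ and $\|M_d\|_{L^{2/r'}\to L^{2/r'}}^{2/r'}\approx \epsilon^{-1}\approx \Atwo{W}$. In the paper this single factor of $\Atwo{W}$ \emph{is} the entire constant. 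In your scheme it multiplies the extra $\Atwo{W}$ already picked up in the first step, so your argument as written only yields $\|M_W'\|_{L^2\to L^2}^2\lesssim \Atwo{W}^{2}$. (Compare with the $M_W'$ appearing in the proof of (b)$\Rightarrow$(a) of Theorem~\ref{CarEmbedThm}, which is your $V_I$ version and there gets exactly the bound $\Atwo{W}^{1/(p-1)}=\Atwo{W}$ on its operator norm, i.e.\ $\Atwo{W}^2$ on the square.) The fix is simply to delete your first reduction and run the H\"older/reverse H\"older argument directly with $(m_I W^{-1})^{-1/2}$.
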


\begin{proof}    By the (scalar) reverse H\"{o}lder inequality, as before,  we can pick $\epsilon \approx \Atwo{W} ^{-1}$ where \begin{equation*} \left( \frac{1}{|I|} \int_I \|W^{-\frac12} (y) (m_I (W^{-1})) ^{-\frac{1}{2}} \|^{2 + \epsilon} \, dy \right)^\frac{1}{2 + \epsilon}  \lesssim \left( \frac{1}{|I|} \int_I \|W^{-\frac12} (y) (m_I (W^{-1})) ^{-\frac{1}{2}} \|^{2 } \, dy \right)^\frac{1}{2} \lesssim 1.   \end{equation*}

\noindent Thus by H\"{o}lder's inequality we have \begin{align*} M_W ' \V{f} (x) &\leq \sup_{I \ni x} \left( \frac{1}{|I|} \int_I \|W^{-\frac12} (y) (m_I (W^{-1})) ^{-\frac{1}{2}} \|^{2 + \epsilon} \, dy \right)^\frac{1}{2 + \epsilon}
\left(\frac{1}{|I|} \int_I |\V{f}(y)|^{\frac{2 + \epsilon}{1 + \epsilon}} \, dy \right)^\frac{1 + \epsilon}{2 + \epsilon} \\& \lesssim (M(|\V{f}|^{\frac{2 + \epsilon}{1 + \epsilon}}) (x) )^\frac{1 + \epsilon}{2 + \epsilon} \end{align*}  where $M$ is the standard maximal function with respect to cubes.

Finally, as before, the usual $L^{1 + \delta} \rightarrow L^{1 + \delta}$ maximal function bound given by the Marcinkewicz interpolation theorem gives us that \begin{equation*} \inrd |M_W ' \V{f} (x)|^2 \, dx \leq \inrd (M(|\V{f}|^{\frac{2 + \epsilon}{1 + \epsilon}}) (x))^\frac{2 + 2\epsilon}{2 + \epsilon} \, dx \lesssim \epsilon ^{-1} \|\V{f} \|_{L^2} ^2 \end{equation*} which completes the proof as $\epsilon ^{-1} \approx \Atwo{W}$.  \end{proof}

\begin{lemma} If $Q$ is a cube and \begin{equation*} N_Q (x) = \sup_{x \in R \subseteq Q} \|W^\frac{1}{2} (x) (m_R (W^{-1}))  ^{\frac12}\| \end{equation*} then \begin{equation*}
\int_Q (N_Q (x) )^2 \, dx \lesssim |Q| \Atwo{W}  \end{equation*} \end{lemma}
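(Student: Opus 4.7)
The approach is to reduce the matrix norm to a trace, apply the $\mathrm{A}_2$ condition at each dyadic cube, and handle the supremum via a Calder\'on-Zygmund stopping-time / sparse decomposition analogous to the one developed in Lemma~\ref{DSTLem}.

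First I would dominate the matrix norm by a trace. For any $x \in R \subseteq Q$,
\begin{equation*}
\|W^{\frac12}(x)(m_R W^{-1})^{\frac12}\|^2 \lesssim \operatorname{tr}\bigl((m_R W^{-1})^{\frac12} W(x)(m_R W^{-1})^{\frac12}\bigr) = \operatorname{tr}\bigl(W(x)\, m_R W^{-1}\bigr),
\end{equation*}
using the equivalence of the operator and trace norms on $\Mn$. Consequently, $N_Q(x)^2 \lesssim \sup_{x \in R \subseteq Q}\operatorname{tr}(W(x)\, m_R W^{-1})$.

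Next, for each dyadic subcube $R \subseteq Q$, Fubini together with the $\mathrm{A}_2$ condition would give
\begin{equation*}
\int_R \operatorname{tr}(W(x)\, m_R W^{-1})\,dx = |R|\,\operatorname{tr}\bigl(m_R(W) \cdot m_R(W^{-1})\bigr) \lesssim |R|\,\Atwo{W},
\end{equation*}
since $\|m_R(W)\cdot m_R(W^{-1})\| \lesssim \Atwo{W}$ and trace and operator norm differ by a dimensional constant. Equivalently, $\int_R \|W^{\frac12}(x) V_R'\|^2\,dx \lesssim |R|\,\Atwo{W}$.

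To handle the supremum, I would linearize: for each $x \in Q$ select a maximal dyadic cube $R(x) \in \D$ with $x \in R(x) \subseteq Q$ that realizes $N_Q(x)^2$ up to a factor of $2$, and set $E_R = \{x \in Q : R(x) = R\}$, so that $\{E_R\}$ is a disjoint partition of $Q$ with $E_R \subseteq R$. A stopping-time argument in the spirit of Lemma~\ref{DSTLem} specialized to $p = 2$ (with $\lambda_1 \approx 1$, $\lambda_2 \approx \Atwo{W}$), combined with Chebyshev's inequality applied to the above average estimate, would give the sparsity property $|E_R| \gtrsim |R|$. Summing then yields
\begin{equation*}
\int_Q N_Q(x)^2\,dx \lesssim \sum_R \int_R \|W^{\frac12}(x) V_R'\|^2\,dx \lesssim \Atwo{W}\sum_R |R| \lesssim \Atwo{W}\sum_R |E_R| \leq |Q|\,\Atwo{W}.
\end{equation*}

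The main obstacle is verifying the sparsity relation $|E_R| \gtrsim |R|$: the supremum defining $N_Q$ is not of standard maximal-function form, so one must combine the $L^1$ average bound with a careful greedy/maximality selection of $R(x)$ to extract the sparse structure. This is where the stopping-time machinery of Section~2.3 is essential.
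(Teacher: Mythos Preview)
Your proposal has a genuine gap at the sparsity step. The relation $|E_R|\gtrsim|R|$ for your linearizing cubes is asserted but not proved, and neither Lemma~\ref{DSTLem} nor the Chebyshev estimate you cite yields it. The sets $E_R$ are defined by which cube nearly realizes the supremum at a given point $x$; there is no mechanism forcing most points of $R$ to share the same near-optimizer, and in general they will not. The stopping time of Section~2.3 controls how $V_J$ varies relative to $V_I$ along a fixed tower, which is a different statement from the sparsity of a family obtained by pointwise selection. You yourself flag this as ``the main obstacle,'' and indeed it is the entire content of the lemma.

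The paper's argument avoids this issue by a bootstrap rather than a linearization. One first truncates $W$ so that $\int_Q N_Q^2\le B|Q|$ holds a priori for some finite $B$. Then one runs a stopping time on the weight averages: let $\{R_j\}$ be the maximal dyadic subcubes of $Q$ with $\|(m_Q W^{-1})^{-1/2}(m_{R_j}W^{-1})^{1/2}\|>C$. Off $\bigcup R_j$ the factorization $\|W^{1/2}(x)(m_R W^{-1})^{1/2}\|\le \|W^{1/2}(x)(m_Q W^{-1})^{1/2}\|\cdot\|(m_Q W^{-1})^{-1/2}(m_R W^{-1})^{1/2}\|$ bounds $N_Q(x)$ by $C\|W^{1/2}(x)(m_Q W^{-1})^{1/2}\|$, contributing $\lesssim\Atwo{W}|Q|$. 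On each $R_j$ one may replace $N_Q$ by $N_{R_j}$, contributing $\le B|R_j|$. A simple averaging shows $\sum_j|R_j|\le\tfrac12|Q|$ for $C$ large, whence $B\le C'\Atwo{W}+\tfrac12 B$, i.e.\ $B\lesssim\Atwo{W}$. The key difference is that the stopping cubes are chosen from the oscillation of $m_R W^{-1}$, not from where the supremum is attained; this is what makes the decomposition close.
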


\begin{proof} We truncate $W$ as in \cite{BPW} p. 1733. More precisely, write \begin{equation*} W(x) = \sum_{j = 1}^n \lambda_j (x) P_{E_j (x)} \end{equation*} where the $\lambda_j(x)$'s are the eigenvalues of $W(x)$ with corresponding eigenspaces $E_j(x)$ and $P_{E_j(x)}$ is the orthogonal projection onto $E_j(x)$. Now for $n \in \N$,  let $E_1 ^n(x), E_2 ^n(x)$, and $E_3 ^n(x)$ be the span of the eigenspaces corresponding to the eigenvalues $\lambda_j(x) \leq n^{-1}, \ n^{-1} < \lambda_j(x) < n$, \ and $\lambda_j(x) \geq n$, respectively.  Finally, define the truncation $W_n$ as \begin{equation*} W_n(x) = n^{-1} P_{E_1 ^n(x)} + P_{E_2 ^n(x)} W  (x) P_{E_2 ^n(x)} + n P_{E_3 ^n(x)}. \end{equation*}  It is then easy to see that $W_n \rightarrow W$ and $W_n ^{-1} \rightarrow W^{-1} $ pointwise a.e.,  $\Atwo{W_n} \lesssim \Atwo{W}$ for each $n$, and $W_n, W_n^{-1} \leq n\text{Id}_{d \times d}$ (see \cite{BPW}).   If \begin{equation*} N_Q ^n  (x) = \sup_{x \in R \subseteq Q} \|W_n ^\frac{1}{2} (x) (m_R (W_n^{-1}))  ^{\frac12}\| \end{equation*} then $\|(m_R (W_n^{-1}))  ^{-\frac12} - (m_R (W^{-1}))  ^{-\frac12}\| \rightarrow 0 $ as $n \rightarrow \infty$ by the dominated convergence theorem since clearly \begin{equation*} \|W_n ^{-1} - W^{-1}\| \leq \|W_n ^{-1}\| + \| W^{-1}\| \leq 2 \max\{1, \|W^{-1}\|\}. \end{equation*}  Thus, we have that \begin{equation*} \int_Q (N_Q(x))^2 \, dx  \leq \int_Q (\liminf_{n \rightarrow \infty} N_Q ^n (x))^2 \, dx \leq \liminf_{n \rightarrow \infty}  \int_Q (N_Q ^n (x))^2 \, dx. \end{equation*}  Obviously $N_Q ^n (x) \leq n^2$ so trivially there exists $B_n$ such that \begin{equation*}
\int_Q (N^n _Q (x) )^2 \, dx \lesssim B_n |Q|.   \end{equation*}

Putting this all together, it is enough to show that $B \lesssim \Atwo{W}$ if we assume that \begin{equation*} \int_Q (N_Q (x) )^2 \, dx \lesssim B |Q| \end{equation*} (or in other words we show in fact that $B_n \lesssim \Atwo{W}$.)  To that end, let $\{R_j\}$ be maximal subcubes of $Q$ satisfying \begin{equation*} \|(m_Q (W^{-1}))^{-\frac12}  (m_{R_j} (W^{-1}))^\frac12\| > C \end{equation*} for some large $C$ independent of $W$ to be determined.\\

Note that if $x \in Q \backslash \cup_j R_j$ then for any dyadic cube $x \in R \subset Q$ we have \begin{align*} \|W^\frac{1}{2} (x) (m_R (W^{-1}))  ^{\frac12}\| & \leq \|W^\frac{1}{2} (x) (m_Q (W^{-1}))^\frac12 \| \| (m_Q (W^{-1}))^{-\frac12}     (m_R (W^{-1}))  ^{\frac12}\|  \\ & \leq C \|W^\frac{1}{2} (x) (m_Q (W^{-1}))^\frac12 \|. \end{align*} so that \begin{equation*} \int_{Q \backslash \cup_j R_j} (N_Q(x))^2 \, dx \leq C \int_Q  \|W^\frac{1}{2} (x) (m_Q (W^{-1}))^\frac12 \|^2 \, dx \leq C \Atwo{W} |Q|. \end{equation*}

On the other hand, \begin{align*} C^2   \sum_j |R_j| & \leq  \sum_j |R_j | \|(m_Q W^{-1})^{-\frac12} (m_{R_j} (W^{-1}))^\frac12\|^2   \\ & \lesssim
\sum_j  \int_{R_j} \|W^{-\frac12} (x) (m_Q W^{-1})^{-\frac12} \|^2 \, dx  \lesssim   |Q|.  \end{align*} Thus for $C$ large enough independent of $W$ we have  $\sum_j |R_j| \leq \frac12 |Q|$.

Clearly by the definition of $R_j$ and their maximality we can assume for each $x \in R_j$ that $N_Q(x) = N_{R_j} (x)$ since otherwise  $N_Q (x) \lesssim C \|W^\frac{1}{2} (x) (m_Q (W^{-1}))^\frac12 \|  $.  Thus without loss of generality \begin{equation*}  \int_{\cup_j R_j} (N_Q(x))^2 \, dx = \sum_j \int_{R_j} (N_{R_j} (x))^2 \, dx \leq B \sum_j |R_j| \leq \frac{1}{2} B |Q|. \end{equation*}  Finally this implies that there exists $C$ independent of $W$ where  $B \leq    \frac{1}{2}B + C \Atwo{W} $ which completes the proof.  \end{proof}

\begin{theorem} $M_W$ is bounded on $L^2$ and in fact \begin{equation*} \|M_W \|_{L^2 \rightarrow L^2} \lesssim \Atwo{W}.  \end{equation*}  \end{theorem}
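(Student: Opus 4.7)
The starting point will be the elementary pointwise bound, valid for any dyadic cube $J$ containing $x$:
\begin{equation*}
\frac{1}{|J|}\int_J |W^{\frac12}(x) W^{-\frac12}(y)\vec{f}(y)|\,dy
\;\le\;
\|W^{\frac12}(x)(m_J W^{-1})^{\frac12}\|\cdot M_W'\vec{f}(x),
\end{equation*}
obtained by inserting the identity $(m_JW^{-1})^{\frac12}(m_JW^{-1})^{-\frac12}$ and applying $|AB\vec{v}|\le\|A\|\,|B\vec{v}|$. By density I may take $\vec{f}$ bounded and compactly supported; fixing a large dyadic cube $Q_0\supseteq \supp\vec{f}$ and letting $M_W^{Q_0}$ denote the truncated operator whose defining supremum is restricted to dyadic $J\subseteq Q_0$, the full theorem will follow from a bound on $\|M_W^{Q_0}\vec{f}\|_{L^2}$ that is uniform in $Q_0$, by monotone convergence as $Q_0\uparrow\R^d$.

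The engine of the argument will be a principal-cubes family $\mathcal{S}\subseteq \D(Q_0)$ constructed by exactly the stopping-time from the proof of the previous lemma: $Q_0\in\mathcal{S}$, and the children of $Q\in\mathcal{S}$ are the maximal dyadic $Q'\subsetneq Q$ with $\|(m_QW^{-1})^{-\frac12}(m_{Q'}W^{-1})^{\frac12}\|>C$ for a large absolute $C$. That calculation gives $\sum_{Q'\in\text{ch}(Q)}|Q'|\le |Q|/2$, hence $\mathcal{S}$ is sparse and the disjoint sets $E(Q):=Q\setminus\bigcup_{Q'\in\text{ch}(Q)}Q'$ satisfy $|E(Q)|\ge|Q|/2$. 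A short case analysis on the smallest stopping ancestor of $J$ will show that for $x\in E(Q)$ and any dyadic $J$ with $x\in J\subseteq Q_0$, $\|W^{\frac12}(x)(m_JW^{-1})^{\frac12}\|\lesssim\|W^{\frac12}(x)(m_{Q_J}W^{-1})^{\frac12}\|$ for some $Q_J\in\mathcal{S}$ containing $x$. Combined with the first step, this produces the pointwise sparse bound
\begin{equation*}
M_W^{Q_0}\vec{f}(x)\;\lesssim\;\Bigl(\max_{Q'\in\mathcal{S},\,x\in Q'}\|W^{\frac12}(x)(m_{Q'}W^{-1})^{\frac12}\|\Bigr)\cdot M_W'\vec{f}(x).
\end{equation*}

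Squaring and integrating will reduce the problem to the weighted $L^2$ inequality $\int (M_W'\vec{f})^2\,d\mu\lesssim \|W\|_{\text{A}_2}\|M_W'\vec{f}\|_{L^2}^2$ for the measure $d\mu:=\sum_{Q\in\mathcal{S}}\chi_{E(Q)}\|W^{\frac12}(x)(m_QW^{-1})^{\frac12}\|^2\,dx$ (the maximum over stopping ancestors will be absorbed by a sparse rearrangement, or equivalently by observing that the innermost $Q$ dominates up to $C$). The Carleson property $\mu(R)\lesssim\|W\|_{\text{A}_2}|R|$ for every dyadic $R\subseteq Q_0$ will be immediate from the sparseness of $\mathcal{S}$ together with the trace-norm identity
\begin{equation*}
\int_Q\|W^{\frac12}(x)(m_QW^{-1})^{\frac12}\|^2\,dx
\;\le\;
n\,|Q|\,\text{tr}\bigl((m_QW)(m_QW^{-1})\bigr)
\;\lesssim\;\|W\|_{\text{A}_2}\,|Q|,
\end{equation*}
which is just the $p=2$ matrix A${}_2$ condition. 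The dyadic Carleson embedding, applied after linearizing $M_W'\vec{f}$ and extracting a sparse selector in the standard Calder\'on--Zygmund fashion, then yields $\int (M_W'\vec{f})^2\,d\mu\lesssim \|W\|_{\text{A}_2}\|M_W'\vec{f}\|_{L^2}^2$. Feeding in the previous lemma, $\|M_W'\vec{f}\|_{L^2}^2\lesssim \|W\|_{\text{A}_2}\|\vec{f}\|_{L^2}^2$, we arrive at $\|M_W^{Q_0}\vec{f}\|_{L^2}^2\lesssim \|W\|_{\text{A}_2}^2\|\vec{f}\|_{L^2}^2$, uniformly in $Q_0$.

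The main obstacle will be the Carleson-embedding step: the textbook dyadic Carleson embedding gives control over $\sum_Qa_Q(m_Qg)^2$, not over $(M_W'\vec{f})^2$, so one must either linearize $M_W'\vec{f}$ via its defining supremum and extract a Calder\'on--Zygmund sparse subfamily (a standard sparse-operator trick), or adapt the embedding directly to accommodate a maximal function in place of averages. A secondary technicality will be organizing the maximum over stopping ancestors so that only the deepest stopping cube effectively contributes; this should drop out once the iterated stopping condition is carefully verified.
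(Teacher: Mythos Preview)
Your opening pointwise bound and the construction of the sparse family $\mathcal{S}$ are correct, and they mirror the internal mechanism of the paper's preceding $N_Q$ lemma. The genuine gap is the parenthetical claim that ``the innermost $Q$ dominates up to $C$,'' i.e.\ that for $x\in E(Q)$ and any stopping ancestor $Q'\supsetneq Q$ one has $\|W^{1/2}(x)(m_{Q'}W^{-1})^{1/2}\|\lesssim\|W^{1/2}(x)(m_QW^{-1})^{1/2}\|$. The stopping rule only bounds $\|(m_{Q'}W^{-1})^{-1/2}(m_QW^{-1})^{1/2}\|$, which for matrices says nothing about the reverse product $\|(m_QW^{-1})^{-1/2}(m_{Q'}W^{-1})^{1/2}\|$; a $2\times 2$ diagonal example with $m_QW^{-1}$ and $m_{Q'}W^{-1}$ having swapped large and small eigenvalues (and $W$ constant on $Q$) already makes this ratio unbounded. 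The alternative ``sparse rearrangement'' does not rescue the argument either: replacing the squared maximum by the sum over stopping ancestors produces the measure $\sum_{Q\in\mathcal{S}}\chi_Q\|W^{1/2}(\cdot)(m_QW^{-1})^{1/2}\|^2\,dx$, and its Carleson bound $\lesssim\|W\|_{\text{A}_2}|R|$ fails for dyadic $R$ lying strictly between stopping generations, for essentially the same reason (the stopping condition only yields $m_RW^{-1}\le C^2m_QW^{-1}$, the wrong direction for controlling $\text{tr}\bigl((m_RW)(m_QW^{-1})\bigr)$).

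The paper avoids the ancestor problem by making the decomposition depend on $\vec f$. It fixes for each $x$ a near-optimal cube $R_x$, groups points by the dyadic level $2^j$ of the unnormalized integral $\int_{R_x}|(m_{R_x}W^{-1})^{-1/2}W^{-1/2}\vec f|$, and selects maximal cubes $S\in S_j$ at each level. Then $R_x\subseteq S$ for a single such $S$, so $\|W^{1/2}(x)(m_{R_x}W^{-1})^{1/2}\|\le N_S(x)$ with no maximum over ancestors to unravel; the $N_Q$ lemma gives $\int_S N_S^2\lesssim\|W\|_{\text{A}_2}|S|$, the cubes in $S_j$ are pairwise disjoint, and summing $2^{2j}|\{M_W'\vec f>2^j\}|$ over $j$ recovers $\|W\|_{\text{A}_2}\|M_W'\vec f\|_{L^2}^2$. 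Your family $\mathcal{S}$ depends only on $W$, and that lack of adaptivity to $\vec f$ is precisely what forces the uncontrolled maximum over stopping ancestors.
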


\begin{proof} For each $x \in \Rd$ pick (and fix) some dyadic cube $R_x$ such that \begin{align} \frac12 M_W (\V{f})(x) &\leq \frac{1}{|R_x|} \int_{R_x}  |W^\frac12 (x) W^{-\frac12} (y) \V{f}(y)| \, dy  \label{RxDef} \\ & \leq \|W^\frac12 (x) (m_{R_x} (W^{-1}) )^{\frac{1}{2}} \| \left(\frac{1}{|R_x|} \int_{R_x} |(m_{R_x} (W^{-1}) )^{-\frac{1}{2}} W^{-\frac12} (y) \V{f}(y)| \, dy \right). \nonumber \end{align}

 For $x \in \Rd$ pick $j \in \Z$ where   \begin{equation} 2^j \leq  \int_{R_x} |(m_{R_x} (W^{-1}) )^{-\frac{1}{2}} W^{-\frac12} (y) \V{f}(y)| \, dy < 2^{j + 1} \label{SjDef} \end{equation} and let $S_j$  be the collection of all cubes $R = R_x$ for all $x \in \Rd$ that are maximal and satisfy (\ref{SjDef}) (note that the Cauchy Schwarz inequality implies that such a maximal cube exists).   We therefore have that for every $x \in \Rd$,  $x \in R_x \subseteq S$ for some $S \in S_j$ where $j = j_x \in \Z$. Note that if $R_x$ satisfies (\ref{SjDef}) for $j \in \Z$ then  \begin{equation*} \frac{1}{|R_x|} \int_{R_x} |(m_{R_x} (W^{-1}) )^{-\frac{1}{2}} W^{-\frac12} (y) \V{f}(y)| \, dy \leq  \frac{2}{|S|}\int_{S} |(m_{S} (W^{-1}) )^{-\frac{1}{2}} W^{-\frac12} (y) \V{f}(y)| \, dy \end{equation*} since otherwise trivially (\ref{SjDef}) is violated.

   Now if $x \in \Rd$ then pick $j = j_x $ as before and pick $S \in S_j$ with $R_x \subseteq S \in S_j$ so \begin{align*}  M_W (\V{f})(x)  & \leq 2 \|W^\frac12 (x) (m_{R_x} (W^{-1}) )^{\frac{1}{2}} \| \left(\frac{1}{|R_x|} \int_{R_x} |(m_{R_x} (W^{-1}) )^{-\frac{1}{2}} W^{-\frac12} (y) \V{f}(y)| \, dy \right) \nonumber  \\ & \leq 4 N_{S} (x)  \left(\frac{2}{|S|}\int_{S} |(m_{S} (W^{-1}) )^{-\frac{1}{2}} W^{-\frac12} (y) \V{f}(y)| \, dy \right) \\ & \leq 4 2 ^{j + 1} N_{S} (x)    \end{align*} so that finally the previous two lemmas give us that  \begin{align*} \inrd |M_W \V{f} (x) |^2 \, dx &\lesssim  \sum_{j \in \Z,\  S \in S_j}  2^{2j} \int_S (N_S (x)) ^2 \, dx \\ & \lesssim \Atwo{W} \sum_{j \in \Z} 2 ^{2j} |\bigsqcup_{S \in S_j} S|  \\ & \leq \Atwo{W} \sum_{j \in \Z} 2 ^{2j} |\{x : M_W ' \V{f} (x) > 2^j\}|  \\ & \approx   \Atwo{W} \|M_W ' \V{f} \|_{L^2} ^2  \\ & \lesssim \Atwo{W}^2 \|\V{f}\|_{L^2} ^2 \end{align*} which completes the proof.

\end{proof}

Interestingly, note that Lemma \ref{IntMaxEst} is sharp with respect to $\Atwo{W}$ since otherwise we could get a better $\Atwo{W}$ bound for $\|M_W\|_{L^2 \rightarrow L^2}$ (which is known to be sharp in the scalar setting, and thus the matrix setting). Despite this, the following simple result says that we can legitimately consider $M_W ' $ to be ``the" universal $p = 2$ matrix weighted maximal function corresponding to $W^{-1}$.

\begin{proposition} $M_W'$ is weak $(2, 2)$ for \textit{any} (not necessarily A${}_2$) matrix weight $W$. \end{proposition}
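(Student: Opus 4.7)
The plan is to prove this via the standard Calderón--Zygmund maximal-cube decomposition, with the key twist that the averaging in the definition of $M_W'$ pairs perfectly with a trace computation, so no $\text{A}_2$ hypothesis is needed.

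First I would fix $\lambda > 0$ and set $E_\lambda := \{x \in \Rd : M_W'\vec{f}(x) > \lambda\}$. For each $x \in E_\lambda$ the definition produces at least one dyadic cube $I \ni x$ with
\begin{equation*}
\frac{1}{|I|}\int_I |(m_I(W^{-1}))^{-\frac12} W^{-\frac12}(y)\vec{f}(y)|\,dy > \lambda.
\end{equation*}
A preliminary application of the Cauchy--Schwarz inequality (see the trace step below) shows that the left-hand side is bounded by $(n/|I|)^{1/2}\|\vec{f}\|_{L^2}$, which tends to $0$ as $|I|\to\infty$, so among all such cubes through $x$ there is a maximal one. Letting $\{Q_j\}$ denote this (countable) collection of maximal dyadic cubes, the $Q_j$ are pairwise disjoint by the dyadic structure, $E_\lambda \subseteq \bigsqcup_j Q_j$, and
\begin{equation*}
\lambda|Q_j| < \int_{Q_j} |(m_{Q_j}(W^{-1}))^{-\frac12} W^{-\frac12}(y)\vec{f}(y)|\,dy.
\end{equation*}

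Next I would apply the pointwise bound $|A\vec{v}|\le\|A\|\,|\vec{v}|$ followed by Cauchy--Schwarz to obtain
\begin{equation*}
\lambda|Q_j| < \Bigl(\int_{Q_j}\|(m_{Q_j}(W^{-1}))^{-\frac12}W^{-\frac12}(y)\|^{2}\,dy\Bigr)^{\!\frac12}\Bigl(\int_{Q_j}|\vec{f}(y)|^{2}\,dy\Bigr)^{\!\frac12}.
\end{equation*}
The main step (and what makes the proof go through without any $\text{A}_2$ assumption) is the trace identity applied to the first factor. Using the equivalence of the operator norm with the Hilbert--Schmidt norm on $\Mn$, the cyclic property of the trace, and the elementary identity $\int_{Q_j} W^{-1}(y)\,dy = |Q_j|\,m_{Q_j}(W^{-1})$, I would compute
\begin{equation*}
\int_{Q_j}\|(m_{Q_j}(W^{-1}))^{-\frac12}W^{-\frac12}(y)\|^{2}\,dy \lesssim \int_{Q_j}\tr\bigl(W^{-1}(y)(m_{Q_j}(W^{-1}))^{-1}\bigr)\,dy = n\,|Q_j|.
\end{equation*}

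Combining these two displays yields $\lambda^{2}|Q_j|\lesssim n\int_{Q_j}|\vec{f}(y)|^{2}\,dy$, and summing over the disjoint collection $\{Q_j\}$ gives
\begin{equation*}
\lambda^{2}|E_\lambda| \leq \lambda^{2}\sum_j |Q_j| \lesssim n\sum_j \int_{Q_j}|\vec{f}(y)|^{2}\,dy \leq n\,\|\vec{f}\|_{L^2}^{2},
\end{equation*}
which is the desired weak-$(2,2)$ estimate. The only mild obstacle is making sure the maximal-cube extraction is legitimate for an arbitrary matrix weight $W$ (i.e.\ without $\text{A}_2$), but this is handled by the same Cauchy--Schwarz + trace bound used in the main estimate, which forces averages over very large cubes to vanish; the rest of the argument is formal.
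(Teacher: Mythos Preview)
Your proposal is correct and follows essentially the same argument as the paper: select maximal dyadic cubes where the average exceeds $\lambda$, apply Cauchy--Schwarz, and control the weight factor $\frac{1}{|Q_j|}\int_{Q_j}\|(m_{Q_j}(W^{-1}))^{-\frac12}W^{-\frac12}(y)\|^{2}\,dy$ by a dimensional constant via the trace identity. You are in fact more explicit than the paper in justifying that last step and in checking the existence of the maximal cubes.
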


\begin{proof} Let $\lambda > 0$ and let  $\{I_j\}$ be the collection of maximal dyadic cubes such that \begin{equation*} \frac{1}{|I_j|} \int_{I_j} | (m_{I_j} W^{-1})^{-\frac12} W^{-\frac12} (y)  \V{f}(y)| \, dy > \lambda \end{equation*} so as usual $\{x : M_W ' \V{f} (x) > \lambda \} = \bigsqcup_j I_j.$    Then \begin{align*} \sum_j |I_j| & = \sum_{j} \frac{|I_j|^2}{|I_j|}  \leq \frac{1}{\lambda^2} \sum_j \frac{1}{|I_j|} \left(\int_{I_j} | (m_{I_j} W^{-1})^{-\frac12} W^{-\frac12} (y) \V{f}(y)| \, dy \right)^2 \\ & \leq \frac{1}{\lambda^2} \sum_j  \left( \frac{1}{|I_j|}\int_{I_j} \| (m_{I_j} W^{-1})^{-\frac12} W^{-\frac12} (y) \|^2 \, dy \right)\left(\int_{I_j} |\V{f}(y)|^2 \, dy \right) \\ & \lesssim \frac{1}{\lambda^2} \sum_j  \int_{I_j} |\V{f}(y)|^2 \, dy \leq \frac{\|\V{f}\|_{L^2}^2}{\lambda^2}. \end{align*}  \end{proof}

Note that very similar maximal functions can be defined when $p \neq 2$ and similar weighted norm inequalities can be proved for these maximal functions (see \cite{IM} for proofs, where in fact fractional matrix weighted maximal functions are studied in detail and are applied to the study of matrix weighted norm inequalities for fractional integral operators and related matrix weighted Poincare and Sobolev inequalities.)

Lastly we will give a very simple ``maximal function" proof of the matrix weighted norm inequalities from \cite{BW} for sparse operators, which provides a simpler proof that avoids the Carleson embedding theorem and that is similar to the by now classical proof from \cite{CMP}.   Also, note that in the scalar weighted case, the $L^2(W)$ bound of sparse operators (see the definition below) has linear $\Atwo{w}$ dependence (see \cite{CMP} for the very easy maximal function proof in the scalar setting).

Finally, despite it's relative ease, note that this proof clearly highlights one of the severe challenges in using maximal functions (in any way, shape, or form) to prove sharp matrix weighted norm inequalities:  the absence of $L^2$ bounds independent of $W$ for the  universal matrix weighted maximal function makes scalar arguments much less efficient in the matrix weighted setting.

Let $\MC{G} \subset \D$ be a ``sparse" collection in the sense that for any $I \in \MC{G}$ we have \begin{equation*} \sum_{J \in \chG(I)} |J| \leq \frac12 |I| \end{equation*} where $\chG(I)$ are the children of $I$ that are also members of $\MC{G}$.  Furthermore, define the sparse operator $S = S_{\MC{G}}$ by \begin{equation*} S \vec{f} = \sum_{I \in \MC{G}} m_I \vec{f} \, \chi_I. \end{equation*}

\begin{proposition} If $W$ is a matrix A${}_2$ weight and $S$ is a sparse operator then \begin{equation*} \|S\|_{L^2(W) \rightarrow L^2(W)} \lesssim \Atwo{W} ^\frac32. \end{equation*}
\end{proposition}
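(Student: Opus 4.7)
\textit{The plan is to argue by duality.} Since $(L^2(W))^* = L^2(W^{-1})$ under the pairing $\int\langle\vec{f},\vec{g}\rangle\,dx$, it suffices to establish
\begin{equation*}
|\langle S\vec{f},\vec{g}\rangle| \lesssim \Atwo{W}^{3/2} \|\vec{f}\|_{L^2(W)} \|\vec{g}\|_{L^2(W^{-1})}.
\end{equation*}
Expanding gives $\langle S\vec{f},\vec{g}\rangle = \sum_{I \in \MC{G}} |I| \langle m_I\vec{f}, m_I\vec{g}\rangle$, and setting $E_I := I \setminus \bigcup_{J \in \chG(I)} J$ the sparseness hypothesis yields $|I| \leq 2|E_I|$ with the $E_I$'s pairwise disjoint, so each per-cube inner product will eventually be integrated over its own $E_I$.

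\textit{The per-cube estimate via reducing operators.} The key move is the identity
\begin{equation*}
\langle m_I\vec{f}, m_I\vec{g}\rangle = \langle (V_I')^{-1} m_I\vec{f}, \, V_I' m_I\vec{g}\rangle
\end{equation*}
(by self-adjointness of $V_I'$) followed by Cauchy-Schwarz. Writing $\vec{f} = W^{-1/2}\vec{F}$ and $\vec{g} = W^{1/2}\vec{G}$ (so that $\|\vec{F}\|_{L^2} = \|\vec{f}\|_{L^2(W)}$ and $\|\vec{G}\|_{L^2} = \|\vec{g}\|_{L^2(W^{-1})}$), pulling $(V_I')^{\pm 1}$ inside the average shows that for each $x \in I$
\begin{equation*}
|(V_I')^{-1} m_I\vec{f}| \leq \frac{1}{|I|}\int_I |(m_I W^{-1})^{-1/2} W^{-1/2}(y)\vec{F}(y)|\,dy \leq M_W'\vec{F}(x),
\end{equation*}
which is precisely the Christ-Goldberg-type maximal function of Lemma \ref{IntMaxEst} (with $L^2\to L^2$ norm $\lesssim \Atwo{W}^{1/2}$), and
\begin{equation*}
|V_I' m_I\vec{g}| \leq \frac{1}{|I|}\int_I |(m_I W^{-1})^{1/2} W^{1/2}(y)\vec{G}(y)|\,dy =: \MC{M}\vec{G}(x),
\end{equation*}
where $\MC{M}$ is nothing but the auxiliary maximal function appearing in the proof of (b)$\Rightarrow$(a) of Theorem \ref{CarEmbedThm} with $W$ replaced by $W^{-1}$ and $p=2$; that proof therefore gives $\|\MC{M}\|_{L^2 \to L^2} \lesssim \|W^{-1}\|_{A_2} = \Atwo{W}$.

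\textit{Assembly and the main difficulty.} Taking infima over $x \in E_I$, summing, invoking disjointness of the $E_I$'s, and Cauchy-Schwarz in $L^2(\Rd)$ will then yield
\begin{equation*}
|\langle S\vec{f},\vec{g}\rangle| \lesssim \int_{\Rd} M_W'(W^{1/2}\vec{f})(x) \cdot \MC{M}(W^{-1/2}\vec{g})(x)\,dx \lesssim \Atwo{W}^{3/2} \|\vec{f}\|_{L^2(W)}\|\vec{g}\|_{L^2(W^{-1})},
\end{equation*}
which is the required bound. The essential subtlety — and the source of the exponent $\tfrac{3}{2}$ — is the \emph{asymmetric} splitting through $V_I'$: the seemingly natural symmetric choice via $V_I$ on the $\vec{f}$ side and $V_I^{-1}$ on the $\vec{g}$ side would force the rougher Carleson-embedding maximal function (with bound $\Atwo{W}$) on both sides and yield only $\Atwo{W}^2$, whereas Lemma \ref{IntMaxEst} is precisely tailored to control the factor involving the negative power $(V_I')^{-1}$. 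Matching each reducing-operator factor to the maximal function whose $L^2$ estimate it fits will be the key organizing idea of the proof.
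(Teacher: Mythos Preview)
Your proof is correct and follows the same duality--sparseness--maximal-function template as the paper's. The only difference is bookkeeping: the paper inserts $(m_I W^{-1})^{-1/2}$ and $(m_I W)^{-1/2}$ symmetrically, pays an explicit $\Atwo{W}^{1/2}$ from the A${}_2$ condition to pass from $(m_I W^{-1})^{1/2}$ to $(m_I W)^{-1/2}$, and then applies Lemma~\ref{IntMaxEst} twice (once for $W$, once for $W^{-1}$, each contributing $\Atwo{W}^{1/2}$), whereas you skip that conversion by pairing Lemma~\ref{IntMaxEst} on the $\vec f$ side with the rougher $\Atwo{W}$ bound from the proof of (b)$\Rightarrow$(a) in Theorem~\ref{CarEmbedThm} on the $\vec g$ side---both routes arrive at $\Atwo{W}^{3/2}$.
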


\begin{proof} Let $\vec{f}, \vec{g} \in L^2$. For any $I \in \MC{G}$ let $E_I$ be defined by \begin{equation*} E_I = I \backslash \left(\bigcup_{J \in \chG(I)}  J \right) \end{equation*} so that clearly $\{E_I\}_{I \in \MC{G}}$ is a disjoint collection of measurable sets satisfying (by the sparseness condition) $2|E_I| \geq |I|$   .     Then \begin{align*} & \left|\ip{W^\frac12 S (W^{-\frac12} \vec{f} )}{\vec{g}}_{L^2} \right| \\  & =  \left|\ip{ S (W^{-\frac12}\vec{f}) }{W^\frac12 \vec{g}}_{L^2} \right| \\ & \leq \sum_{I \in \MC{G}} |I| \left|\ip{m_I (W^{-\frac12}\vec{f})}{m_I (W^\frac12 \vec{g})}_{\C}\right| \\ & \leq   \Atwo{W}^\frac12 \sum_{I \in \MC{G}} |I|  \left(m_I |(m_I (W ^{-\frac12}))^{-\frac12} W^{-\frac12}\vec{f}| \right)  \, \left(m_I | (m_I W ^{\frac12})^{-\frac12}  (W^\frac12 \vec{g})| \right)\\ & \leq 2 \Atwo{W}^\frac12 \sum_{I \in \MC{G}} |E_I|  \left(m_I |(m_I (W ^{-\frac12}))^{-\frac12} W^{-\frac12}\vec{f}| \right) \, \left(m_I | (m_I W ^{\frac12})^{-\frac12}  (W^\frac12 \vec{g})|  \right) \\ & \leq 2 \Atwo{W}^\frac12 \sum_{I \in \MC{G}} \int_{E_I} M_W ' \vec{f} (x) M_{W^{-1}} ' \vec{g} (x) \, dx \\ & \lesssim   \Atwo{W}^\frac32 \|\vec{f}\|_{L^2} \|\vec{g}\|_{L^2}. \end{align*} \end{proof}


\section{Acknowlegements} \label{ackref}The first author would like to thank Kelly Bickel and Brett Wick for their interesting discussions regarding sharp matrix weighted norm inequalities.

\end{document}